\newtheorem{theo}{Theorem}[section]
\newtheorem{prop}[theo]{Proposition}
\newtheorem{cor}[theo]{Corollary}
\theoremstyle{definition}
\newtheorem{rem}{Remark}[section]
\newtheorem{notn}{Notation}[section]
\DeclareMathOperator{\Vis}{Vis}
\DeclareMathOperator{\SL}{{\rm SL}}
\DeclareMathOperator{\SO}{{\rm SO}}
\DeclareMathOperator{\On}{O}
\newcommand{\nor}{{N}_G} 
\newcommand{\norone}{{N}^1_G}
\newcommand{\cen}{{Z}_G} 
\DeclareMathOperator{\supp}{{\rm supp}} 
\DeclareMathOperator{\Stab}{{\rm Stab}} 
\DeclareMathOperator{\Ad}{{\rm Ad}} 
\DeclareMathOperator{\diag}{\rm{diag}} 
\DeclareMathOperator{\Lie}{{\rm Lie}} 
\DeclareMathOperator{\Cc}{{\rm C}_{{\rm c}}} 
\DeclareMathOperator{\dd}{{\rm d}}
\renewcommand{\phi}{\varphi}
\newcommand{\vect}[1]{{\boldsymbol{#1}}}
\newcommand{\vv}{\vect{v}}
\newcommand{\vx}{\vect{x}}
\newcommand{\field}[1]{\mathbb{#1}} 
\newcommand{\R}{\field{R}}
\newcommand{\N}{\field{N}}
\newcommand{\HH}{\field{H}}
\newcommand{\Sn}{\field{S}}
\providecommand{\abs}[1]{\lvert#1\rvert}
\providecommand{\Abs}[1]{\left\lvert #1 \right\rvert}
\providecommand{\norm}[1]{\lVert#1\rVert}
\providecommand{\inpr}[2]{\langle #1,\, #2\rangle}
\providecommand{\trn}[1]{{\,^{\bf t}\!#1}}
\providecommand{\card}[1]{\#(#1)}
\newcommand{\innprod}{\inpr}
\newcommand{\inv}{^{-1}}
\newcommand{\cl}[1]{\overline{#1}}
\newcommand{\la}[1]{\mathfrak{\lowercase{#1}}}
\newcommand{\cA}{\mathcal{A}}
\newcommand{\cC}{\mathcal{C}}
\newcommand{\cD}{\mathcal{D}}
\newcommand{\cH}{\mathcal{H}}
\newcommand{\cI}{\mathcal{I}}
\newcommand{\cK}{\mathcal{K}}
\newcommand{\cO}{\mathcal{O}}
\newcommand{\cS}{\mathcal{S}}
\newcommand{\sS}{\mathscr{S}}
\newcommand{\cW}{\mathcal{W}}
\newcommand{\gmg}{{G/\Gamma}}
\newcommand{\pN}{p_N}
\newcommand{\nc}{\text{nc}}
\newcommand{\simm}[1]{\ \stackrel{#1}{\approx}\ }
\begin{document}

\title[Evolution of curves under geodesic flow]{Asymptotic evolution
  of smooth curves under geodesic flow on hyperbolic manifolds-II}

\author{Nimish A.  Shah} \address{Tata Institute of Fundamental
  Research, Mumbai 400005, INDIA} \email{nimish@math.tifr.res.in}


\begin{abstract}
  Extending the earlier results for analytic curve segments, in this
  article we describe the asymptotic behaviour of evolution of a
  finite segment of a $C^n$-smooth curve under the geodesic flow on
  the unit tangent bundle of a finite volume hyperbolic
  $n$-manifold. In particular, we show that if the curve satisfies
  certain natural geometric conditions, the pushforward of the
  parameter measure on the curve under the geodesic flow converges to
  the normalized canonical Riemannian measure on the tangent bundle in
  the limit. We also study the limits of geodesic evolution of
  shrinking segments.

  We use Ratner's classification of ergodic invariant measures for
  unipotent flows on homogeneous spaces of $\SO(n,1)$, and an
  observation relating local growth properties of smooth curves and
  dynamics of linear $\SL(2,\R)$-actions.
  
\end{abstract}

\subjclass{37A17 (Primary); 22E40, 37D40 (Secondary)}

\keywords{Equidistribution, geodesic flow, hyperbolic manifold,
  shrinking curves, unipotent flow, Ratner's theorem}

\maketitle

\section{Introduction}
Let $M$ be a hyperbolic $n$-dimensional manifold of finite volume,
$p:T^1(M)\to M$ be the unit tangent bundle over $M$, and
$\{g_t\}_{t\in\R}$ denote the geodesic flow on $T^1(M)$. Let
$\pi:\HH^n\to M$ be a locally isometric universal cover of $M$ and
$D\pi:T^1(\HH^n)\to T^1(M)$ the corresponding covering map. If
$\{\tilde g_t\}$ denotes the geodesic flow on $T^1(\HH^n)$, then
$p(\tilde g_t(v))\xrightarrow{t\to\infty} \Vis(v)$ for all $v\in
T^1(\HH^n)$, where $\Vis:T^1(\HH^n)\to \partial \HH^n\cong \Sn^{n-1}$
denotes the visual map. We define
\begin{equation}
  \label{eq:95}
  \begin{split}
    \sS=\{\partial\HH^m\subset\Sn^{n-1}&:\text{$\HH^m\hookrightarrow
      \HH^n$
      is an isometric embedding such that} \\
    &\text{$\pi(\HH^m)$ is closed in $M$, where $2\leq k\leq n-1$}\}.
  \end{split}
\end{equation}
Then $\sS$ is a countable collection of proper closed subspheres of
$\Sn^{n-1}$ (\cite{Shah:tot-geod},\cite[\S(5.2)]{Shah:uniform}).

Let $I$ be a compact interval with nonempty interior. Let $\psi:I\to
T^1(M)$ be an continuous map with the following property: If
$\tilde\psi:I\to T^1(\HH^n)$ is any continuous lift of $\psi$ under
$D\pi$, then
\begin{enumerate}
\item[a)] $\Vis\circ\tilde\psi\in C^n(I,\Sn^{n-1})$,
\item[b)] the first derivative $(\Vis\circ\tilde\psi)^{(1)}(s)\neq 0$
  for Lebesgue a.e.\ $s\in I$, and
\item[c)] for any $S\in\sS$, $\Vis(\tilde\psi(s))\not\in S$ for
  Lebesgue a.e.\ $s\in I$.
\end{enumerate}

\begin{theo}
  \label{thm:basic}
  Let the notation be as above. Then for any $f\in\Cc(T^1(M))$
  \begin{equation}
    \label{eq:37a}
    \frac{1}{\abs{I}}\int_{I}f(g_t\psi(s))\,ds\xrightarrow{t\to\infty}
    \int_{T^1(M)}f\,d\mu,
  \end{equation}
  where $\abs{I}$ denotes the Lebesgue measure of $I$ and $\mu$
  denotes the normalized measure associated to the canonical
  Riemannian volume form on $T^1(M)$.
\end{theo}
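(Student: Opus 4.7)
The plan is to reduce the problem to a homogeneous-space dynamics question and then invoke Ratner's measure-classification theorem. Write $G=\SO(n,1)^\circ$, choose a maximal compact $K$ with $G/K=\HH^n$, let $A=\{a_t\}_{t\in\R}$ be a one-parameter $\R$-split subgroup and $H=Z_K(A)\cong\SO(n-1)$ its compact centraliser, and pick a torsion-free lattice $\Gamma\subset G$ with $T^1(M)=\Gamma\backslash G/H$, the geodesic flow being identified with right multiplication by $a_t$. Fix the horospherical decomposition $\la{g}=\la{u}^-\oplus\la{h}\oplus\la{a}\oplus\la{u}^+$, with $\Ad(a_t)|_{\la{u}^{\pm}}=e^{\pm t}$. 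The visual map $G/H\to\Sn^{n-1}\cong G/P^+$, where $P^+=HAU^+$, has fibres in the $AU^+$-direction; writing locally $\phi(s)=\phi(s_0)\exp\xi(s)$ with $\xi$ a $C^n$-curve into $\la{g}$ and $\xi(s_0)=0$, condition (b) becomes the statement that the $\la{u}^-$-component $(\xi^-)'(s_0)$ is nonzero for almost every $s_0\in I$. Choosing a continuous lift $\phi:I\to\Gamma\backslash G$ of $\psi$ and writing $\mu_G$ for the Haar probability measure on $\Gamma\backslash G$, it suffices to prove
\[
  \nu_t\dfn\frac{1}{\abs{I}}\int_I\delta_{\phi(s)a_t}\,ds\;\xrightarrow{t\to\infty}\;\mu_G
\]
in the weak-$*$ sense.

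I would first establish tightness of $\{\nu_t\}$ by a Dani--Margulis non-divergence argument for smooth $(C,\alpha)$-good curves, using (c) to certify that the lifted trajectory is not trapped in any singular set, and then extract a weak-$*$ subsequential limit $\nu=\lim_k\nu_{t_k}$, a Borel probability measure on $\Gamma\backslash G$. The core step is to prove that $\nu$ is invariant under right multiplication by $U^-$. Right multiplication by $a_t$ dilates $\la{u}^-$ by $e^t$, so the change of variable $r=e^t(s-s_0)$ rewrites a short arc of $\phi(\cdot)a_t$ near $s_0$ as
\[
  \phi(s_0+e^{-t}r)\,a_t\;=\;\phi(s_0)\,a_t\cdot\exp\!\bigl(r\,(\xi^-)'(s_0)+O(e^{-t})\bigr),
\]
i.e.\ as a long arc of the $U^-$-orbit through $\phi(s_0)a_t$, modulo errors that vanish as $t\to\infty$. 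The observation of the abstract---that local growth of $C^n$-curves is captured by the dynamics of a linear $\SL(2,\R)$-action---converts this formal rescaling into a quantitative comparison valid on a Lebesgue-large set of base points $s_0$, from which a reparametrisation $s\mapsto s+\tau(s,t)$ in the integrand yields $\nu_t(f)-\nu_t(f\circ R_u)\to 0$ for every $u\in U^-$ and every test function, so that $\nu$ is $U^-$-invariant.

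Next I would apply Ratner's theorem to the unipotent subgroup $U^-$ of the simple Lie group $G$: each ergodic component of $\nu$ is the homogeneous probability measure on a closed orbit $\Gamma gL$ with $U^-\subseteq L\subseteq G$ closed connected. For $G=\SO(n,1)^\circ$ the strictly intermediate $L$'s are precisely the stabilisers of isometrically embedded $\HH^k\subset\HH^n$ with $2\le k\le n-1$, and a closed $L$-orbit corresponds to such an $\HH^k$ whose image in $M$ is closed, i.e.\ to an element of $\sS$. Condition (c) asserts that $\Vis\circ\tilde\psi$ avoids every $S\in\sS$ on a full-measure subset of $I$; by the linearisation technique of Dani--Margulis--Mozes--Shah this avoidance is quantitative and passes to $\nu$, forbidding positive mass on any proper closed orbit $\Gamma gL$. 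Consequently every ergodic component has $L=G$, so $\nu=\mu_G$. Since the subsequential limit is thereby unique, the full convergence $\nu_t\to\mu_G$ follows, and pushing down from $\Gamma\backslash G$ to $T^1(M)$ yields~\eqref{eq:37a}.

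The main obstacle will be the $U^-$-invariance step: in the real-analytic setting treated in Part~I a Taylor expansion suffices, but here the merely $C^n$-regularity of $\Vis\circ\tilde\psi$ forces one to use the $\SL(2,\R)$-linearisation observation in an essential way, in order to promote the a.e.\ pointwise condition~(b) into a uniform quantitative estimate strong enough to compare $\nu_t$ with $\nu_t\circ R_u$ uniformly as $t\to\infty$.
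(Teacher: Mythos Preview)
Your overall architecture matches the paper's: translate to $G/\Gamma$, establish non-divergence, prove that any subsequential limit is invariant under a unipotent one-parameter subgroup $W$, invoke Ratner's theorem, and use linearization to exclude concentration on proper closed orbits indexed by $\sS$. However, you have placed the key technical difficulty in the wrong step, and this causes a genuine gap.

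The $W$-invariance of the limit measure is \emph{not} the hard step. It follows from exactly the reparametrization you wrote down, and the paper simply cites the analytic case (Part~I) for it; no $\SL(2,\R)$ input is needed there. The real obstruction created by passing from analytic to $C^n$ curves is that a $C^n$ map is \emph{not} $(C,\alpha)$-good in the Kleinbock--Margulis sense, so your appeal to ``Dani--Margulis non-divergence for smooth $(C,\alpha)$-good curves'' and your later line ``by the linearisation technique of Dani--Margulis--Mozes--Shah this avoidance is quantitative'' both fail as written. These two steps---non-divergence and the quantitative avoidance of tubular neighbourhoods of $N(H,W)$---are precisely where the paper needs the new $\SL(2,\R)$ observation.

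Concretely, the paper's Basic Lemma (Proposition~2.1) uses the $\SL(2,\R)$-representation structure on $V=\oplus\wedge^d\la{g}$ together with condition~(b) to show: on any subinterval of length roughly $e^{-t_k/n}$ the $C^n$ curve can be replaced by its degree-$n$ Taylor polynomial, and one endpoint of that subinterval already witnesses $\norm{a_{t_k}u(\phi_k(\cdot))v}\geq\norm{v}/R_0$. This yields Proposition~2.4, a $(C,\alpha)$-good--type inequality for the $C^n$ curve itself, obtained by transferring the inequality from the approximating polynomials. That proposition is then fed into both the non-divergence argument (Proposition~3.1) and the linearization argument (proof of Theorem~4.1). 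So the correction is: move the $\SL(2,\R)$ observation from the invariance step to the non-divergence and linearization steps, and replace the assumed $(C,\alpha)$-goodness of $C^n$ curves by the Basic-Lemma--based polynomial approximation.
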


For the motivation for considering the question the reader is referred
to \cite{Shah:son1}, where the result was proved in the special case
of analytic curve segments $\psi$. The proof in \cite{Shah:son1}
involves the use of `$(C,\alpha)$-growth properties', in the sense of
Kleinbock and Margulis~\cite{Klein+Mar:Annals98}, of
finite-dimensional spaces of analytic functions. As these could not be
extended to smooth functions, the analogous result could not be proved
by the techniques in \cite{Shah:son1} for smooth curve segments,
though the conclusion could be expected to hold in that generality, as
was especially commented to the author by Peter Sarnak in response to
the result in \cite{Shah:son1}.

In this article we overcome this difficulty by making a new
observation of a linear dynamical nature. It implies that if we
approximate an arbitrarily short piece of a $C^n$-curve by a
polynomial curve degree at most $n$, then the geodesic flow expands
both the approximating curves into long curves of roughly fixed
lengths while still keeping them sufficiently close. This observation
allows us to use the growth properties of polynomial curves of bounded
degrees for linearization
method~\cite{Dani:rk=1,Shah:uniform,Dani+Mar:limit,Mozes+Shah:limit}.

On the space $C^n(I,T^1(M))$, we consider the topology of uniform
convergence up to $n$-derivatives. We now state a more robust form of
Theorem~\ref{thm:basic}.

\begin{theo}
  \label{thm:M}
  Let the map $\psi$ be as above. Then given $f\in \Cc(T^1(M))$ and
  $\epsilon>0$ there exists a neighbourhood $\Omega$ of $\psi$ in
  $C^n(I,T^1(M))$ and $T>0$ such that
  \begin{equation}
    \label{eq:37}
    \Abs{\frac{1}{\abs{I}}\int_{I}f(g_{t}\psi_1(s))\,ds -
      \int_{T^1(M)}f\,d\mu}<\epsilon, \quad \forall \psi_1\in\Omega,\
    \forall t>T.
  \end{equation}
\end{theo}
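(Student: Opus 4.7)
The plan is a contradiction/compactness argument: assume the conclusion of Theorem~\ref{thm:M} fails and extract a sequence $\psi_k\to\psi$ in $C^n(I,T^1(M))$ and times $t_k\to\infty$ with
\[
\Abs{\frac{1}{\abs I}\int_I f(g_{t_k}\psi_k(s))\,ds-\int_{T^1(M)}f\,d\mu}\ge\epsilon.
\]
Consider the push-forward probability measures $\mu_k\dfn (1/\abs{I})\int_I\delta_{g_{t_k}\psi_k(s)}\,ds$ on $T^1(M)$, and extract a weak-$\ast$ subsequential limit $\nu$, which a priori is only a sub-probability measure on the one-point compactification. The plan reduces to showing that any such $\nu$ equals $\mu$, which then contradicts the displayed bound.

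First I would handle non-divergence. Partition $I$ into sub-intervals $I_j$ of small length $\delta$ centered at points $s_j$, and on each $I_j$ replace $\tilde\psi_k$ by its degree-$n$ Taylor polynomial $P_k^{(j)}$ based at $s_j$. Since $\psi_k\to\psi$ in $C^n$, the coefficients of $P_k^{(j)}$ are uniformly bounded. Invoking the new linear-dynamics observation highlighted in the introduction, the geodesic flow expands each $g_t P_k^{(j)}(I_j)$ into a polynomial curve of roughly unit length while keeping it within $\epsilon$ of the genuine piece $g_t\tilde\psi_k(I_j)$, over a range of $t$ that grows suitably with $\delta^{-1}$. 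Polynomial curves of degree $\leq n$ enjoy uniform $(C,\alpha)$-growth in the sense of Kleinbock--Margulis~\cite{Klein+Mar:Annals98}, so the standard non-divergence estimates apply uniformly in $k$ and $j$. Therefore $\nu$ is a probability measure on $T^1(M)$.

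Next I would identify $\nu=\mu$ via the linearization method together with Ratner's classification, following \cite{Dani:rk=1,Shah:uniform,Dani+Mar:limit,Mozes+Shah:limit}. By Ratner's theorem, it suffices to show that $\nu$ charges no proper closed invariant subset of $T^1(M)$ coming from some $S\in\sS$. For each such $S$, the linearization inside the appropriate finite-dimensional representation bounds the $\mu_k$-mass of an $\eta$-tube around the corresponding submanifold by a constant multiple of $\abs{\{s\in I:\Vis(\tilde\psi_k(s))\in S_\eta\}}$, where $S_\eta$ is a small spherical neighbourhood of $S$ in $\Sn^{n-1}$. By $C^0$-convergence $\psi_k\to\psi$ and closedness of $S$, this Lebesgue measure converges, as $k\to\infty$, to $\abs{\{s:\Vis(\tilde\psi(s))\in S_\eta\}}$, which shrinks to $0$ as $\eta\to 0^{+}$ by condition~(c) imposed on $\psi$. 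Hence $\nu$ gives no mass to any $S$-tube, forcing $\nu=\mu$.

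The main obstacle is that the individual perturbations $\psi_k$ are not assumed to satisfy conditions~(b)--(c), so one cannot directly apply Theorem~\ref{thm:basic} or its classical linearization bounds to each $\psi_k$ (for instance, $\Vis\circ\tilde\psi_k$ might lie inside some $S_k\in\sS$). The resolution, and the point where the new observation becomes essential, is that the linearization bounds are applied not to $\psi_k$ but to the polynomial approximations $P_k^{(j)}$, whose $(C,\alpha)$-growth constants depend only on the degree and on uniform $C^n$-norms, with closeness to $g_t\tilde\psi_k$ guaranteed by the linear-dynamics observation for all $t$ in the relevant range. The non-degeneracy condition~(c) then enters only through the limit curve $\psi$ in the final step $k\to\infty$, $\eta\to 0^{+}$.
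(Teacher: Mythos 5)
Your strategy matches the paper's at every essential juncture: a compactness/contradiction reduction to a sequential statement, non-divergence via local polynomial approximation combined with the expansion observation (the paper's Proposition~\ref{prop:main} and Corollary~\ref{cor:phi-poly}), and Ratner's theorem plus linearization with the non-degeneracy conditions applied only to the limit curve. The paper packages this by first reducing Theorem~\ref{thm:M} to Theorem~\ref{thm:curve} (via the identification $T^1(M)\cong\SO(n-1)\backslash G/\Gamma$ and Proposition~\ref{prop:P-}, which permits replacing a lifted curve in $G$ by a curve of the form $u(\phi(s))$ on which the linear dynamics actually operates) and then establishing Theorem~\ref{thm:G-inv}.

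One caveat: the linearization estimate as you state it — the $\mu_k$-mass of an $\eta$-tube is $\leq C\abs{\{s:\Vis(\tilde\psi_k(s))\in S_\eta\}}$ — is stronger than what the Dani--Margulis/Mozes--Shah machinery delivers. What one actually gets is a dichotomy: over each subinterval where the polynomial approximation is good, the proportion of $s$ landing in the small neighbourhood $\Psi$ is either at most $\epsilon$ times the proportion landing in a larger $\Phi$, or else the entire polynomial trajectory stays in $\Phi$. The paper encodes this by splitting the relevant parameter set into $E_2(k)$ (vectors in $\Gamma p_0$ of large norm, handled by the escape estimate in Proposition~\ref{prop:CD}) and $E_1(k)$ (a finite set $\Gamma p_0\cap B$); a positive limsup of $\abs{E_1(k)}$ forces, via Propositions~\ref{prop:Sigma} and~\ref{prop:S}, the limit curve $\phi$ into a proper subsphere on positive measure, contradicting condition~(c). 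Your direct tube-mass bound omits the $\epsilon$-escape term, so it does not follow from the method as cited; the correct version of your step 2 has the form $\mu_k(\text{tube})\leq\epsilon + (\text{contribution tied to }\phi\text{ near subspheres})$. This is a reparable inaccuracy rather than a wrong approach, but as written the key inequality would not hold.
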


It may be noted that even for an analytic map $\psi$, the above
uniform version could not be proved using the methods of
\cite{Shah:son1}, because we do not have the $(C,\alpha)$-growth
property for linear span a neighbourhood of an analytic function.

\subsection{Evolution of general $C^n$-curves}
\label{subsec:geom}
Let $\bar\sS$ denote the collection of all closed totally geodesic
immersed submanifolds of $M$ (including $M$ itself). Given
$M_1\in\bar\sS$, let $\sS(M_1)\subset \sS\cup \{\Sn^{n-1}\}$ be the
collection of the boundaries of all possible lifts of $M_1$ in
$\HH^n$. Let $\mu_{M_1}$ denote the probability measure which is the
normalized measure corresponding to the canonical Riemannian volume
form on $T^1(M_1)\subset T^1(M)$. Given any $S\in\sS$ or
$S=\Sn^{n-1}$, define
\begin{equation}
  \label{eq:106}
  S^\ast=S\setminus \bigcup_{\substack{S_1\subset S,\ \dim S_1<\dim S\\S_1\in \sS}} S_1.
\end{equation}

Let $\psi\in C^n(I,T^1(M))$. Let $\tilde\psi\in C^n(I,\HH^n)$ denote a
lift of $\psi$ under $D\pi$. We define
\begin{align}
  \label{eq:25}
  I(S)&=\{s\in I: \Vis(\tilde\psi(s))\in S^\ast\},\\
  I(M_1)&=\cup_{S\in\sS(M_1)} I(S).
\end{align}
In particular,
\[
I(M)=I(\Sn^{n-1})=\{s\in I:\Vis(\tilde\psi(s))\not\in S,\ S\in\sS\}.
\]

\begin{theo}
  \label{thm:smooth-general}
  Suppose that $(\Vis\circ\tilde\psi)^{(1)}(s)\neq 0$ for almost all
  $s\in I$. Then given any $f\in\Cc(T^1(M))$,
  \begin{equation}
    \label{eq:29}
    \lim_{t\to\infty} \int_If(a_{t}\psi(s))\,ds= \sum_{M_1\in \bar\sS}
    \abs{I(M_1)}\int_{T^1(M)}f\,d\mu_{M_1}.
  \end{equation}
\end{theo}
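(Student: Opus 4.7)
The plan is to reduce Theorem~\ref{thm:smooth-general} to iterated applications of Theorem~\ref{thm:M} by decomposing $I$ according to the smallest totally geodesic submanifold of $M$ whose lift in $\HH^n$ has boundary containing the asymptotic visual image $\Vis(\tilde\psi(s))$. I would proceed by strong induction on $\dim M$; the base case is essentially Theorem~\ref{thm:basic}, since for $\dim M=2$ the hypothesis $(\Vis\circ\tilde\psi)^{(1)}\neq 0$ a.e.\ forces the set of $s$ where $\Vis(\tilde\psi(s))$ lies on the boundary of a closed geodesic (a $0$-sphere) to have Lebesgue measure zero. First, countability of $\sS$ together with the definition~\eqref{eq:106} of $S^\ast$ shows that the sets $I(S)$ for $S\in\sS\cup\{\Sn^{n-1}\}$ are pairwise disjoint modulo null sets and cover $I$; grouping these by the manifold $M_1\in\bar\sS$ to which they correspond gives, modulo a null set, $I=\bigsqcup_{M_1\in\bar\sS}I(M_1)$.

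The crucial step is the reduction to $T^1(M_1)$ via a stable-horocycle projection. For each $S=\partial\HH^m\in\sS(M_1)$ and each $s\in I(S)$ I would define $\tilde\psi'(s)\in T^1(\HH^m)$ as the unit tangent vector whose basepoint is the hyperbolic perpendicular projection of $p(\tilde\psi(s))$ onto $\HH^m$ and whose direction points to $\Vis(\tilde\psi(s))\in\partial\HH^m$. A direct computation in the upper half-space model (placing $\Vis(\tilde\psi(s))$ at infinity) shows that $\tilde\psi(s)$ and $\tilde\psi'(s)$ lie on a common strong-stable horosphere, so that
\[
d\bigl(\tilde g_t(\tilde\psi(s)),\,\tilde g_t(\tilde\psi'(s))\bigr)\to 0 \quad(t\to\infty),
\]
locally uniformly in $s$; hence $\int_{I(S)}\abs{f(g_t\psi(s))-f(g_t\psi'(s))}\,ds\to 0$ for each $f\in\Cc(T^1(M))$, and it suffices to analyse the $\psi'$-integral inside $T^1(M_1)$. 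Since $\Vis(\tilde\psi(s))\in S^\ast$ for a.e.\ $s\in I(S)$, at every Lebesgue density point of $I(S)$ the derivative $(\Vis\circ\tilde\psi)^{(1)}(s)$ is automatically tangent to $S$, so $\psi'$ is $C^n$-smooth a.e.\ on $I(S)$ with non-vanishing visual derivative, and its visual image a.e.\ avoids every proper $S_1\in\sS$ contained in $S$; these are precisely the hypotheses of Theorem~\ref{thm:M} (or of Theorem~\ref{thm:smooth-general} itself, by induction on dimension) inside $T^1(M_1)$.

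Since $I(S)$ is merely measurable, I would pass to intervals via the Lebesgue density theorem: given $\epsilon>0$, cover $I(S)$ up to $\epsilon$-error in measure by finitely many disjoint subintervals $J_i\subset I$ centred at density points of $I(S)$ with $\abs{J_i\cap I(S)}/\abs{J_i}>1-\epsilon$. On each $J_i$ one replaces $\psi'|_{J_i\cap I(S)}$ by a $C^n$-extension $\phi_i:J_i\to T^1(M_1)$ meeting the hypotheses of Theorem~\ref{thm:M} inside $T^1(M_1)$, exploiting the paper's new polynomial-approximation observation (which lets one approximate a short $C^n$-piece by a bounded-degree polynomial curve while preserving the asymptotic behaviour of the pushforward under $g_t$). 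Applying the uniform Theorem~\ref{thm:M} to each $\phi_i$, summing over $i$, and absorbing the contributions from $J_i\setminus I(S)$ and from the uncovered $\epsilon$-set into $\norm{f}_\infty\cdot\epsilon\abs{I}$ yields
\[
\int_{I(S)}f(g_t\psi'(s))\,ds\xrightarrow{t\to\infty}\abs{I(S)}\int_{T^1(M)}f\,d\mu_{M_1}.
\]
Summing over $S\in\sS(M_1)$ yields the $M_1$-contribution; the piece $I(M)$ is treated identically but applying Theorem~\ref{thm:M} directly in $T^1(M)$.

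Finally, since $\sum_{M_1\in\bar\sS}\abs{I(M_1)}=\abs{I}<\infty$, for any prescribed $\delta>0$ only finitely many $M_1$ contribute more than $\delta$, so one may truncate the countable sum and assemble the per-piece limits to obtain~\eqref{eq:29}. The main obstacle will be the third step: extending $\psi'|_{J_i\cap I(S)}$ to a full subinterval $J_i$ while preserving the hypotheses of Theorem~\ref{thm:M} inside the lower-dimensional space $T^1(M_1)$, and then invoking the $C^n$-robustness of that theorem. This is precisely where the paper's new linear-dynamical observation about polynomial approximation of $C^n$-curves is indispensable; without it the extension error could not be controlled and the induction would not close.
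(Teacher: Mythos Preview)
Your overall architecture—decompose $I$ into the measurable pieces $I(S)$, pass to short subintervals via Lebesgue density, and assemble—matches the paper's. But the mechanism on each piece is genuinely different. The paper does \emph{not} project onto $T^1(M_1)$ and apply Theorem~\ref{thm:M} there by induction on $\dim M$. Instead it first proves the shrinking-curves result Theorem~\ref{thm:short-geom} (equivalently Theorem~\ref{thm:short} in $G/\Gamma$): for a density point $x$ of $I(S)$ and intervals $I_k\ni x$ with $\abs{I_k}\to 0$ and $\abs{I_k}^n e^{t_k}\to\infty$, the normalized pushforwards converge to $\mu_{M_1}$, working entirely with the original curve $\psi$ inside $T^1(M)$. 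The limit is identified directly via Ratner's theorem: the density hypothesis forces the subsphere $S'$ appearing in the analogue of \eqref{eq:80} to contain $S$, and a support argument then pins the limit to $z\mu_L$. Theorem~\ref{thm:smooth-general} follows from Theorem~\ref{thm:short-geom} by a routine covering. The paper's route thus produces the shrinking-curves theorems as a byproduct and sidesteps any extension problem.

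Your route has a real gap at the extension step, and the tool you invoke does not do what you claim. Corollary~\ref{cor:phi-poly} compares $a_{t_k}u(\phi_k(s))v$ with $a_{t_k}u(P_{k,x}(s))v$ on intervals of length $\lesssim e^{-t_k/n}$ \emph{shrinking with $k$}; it is the input to the linearization estimate Proposition~\ref{prop:CD}, not a device for manufacturing a $C^n$ map $\phi_i:J_i\to T^1(M_1)$ out of $\psi'|_{J_i\cap I(S)}$. To carry out your plan you would instead need a Whitney-type extension (using that at density points of $I(S)$ all derivatives of $\Vis\circ\tilde\psi$ up to order $n$ are tangent to $S$), then a tubular retraction onto $S$, then a separate verification that the resulting $\phi_i$ meets the hypotheses of Theorem~\ref{thm:M} in $T^1(M_1)$ and that the $g_t$-evolutions of $\phi_i$ and $\psi$ stay close on $J_i\cap I(S)$. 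None of this is supplied. A smaller point: the ``hyperbolic perpendicular projection'' onto $\HH^m$ does \emph{not} land on the same strong-stable horosphere—in the half-space model with $\Vis(\tilde\psi(s))=\infty$, the orthogonal foot from $(a,b)$ to the vertical axis sits at height $\sqrt{a^2+b^2}$, not $b$; you want the horospherical (horizontal) projection instead.
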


The above statement is obtained as a consequence of results about
limiting distributions of the evolution of shrinking curves under the
geodesic flow (see \S\ref{sec:short-geom}).

\subsection{Flows on homogeneous spaces}

The above results will be derived from their analogues in terms of
dynamics of flows on homogeneous space of Lie groups.

Let $G=\SO(n,1)=\SO(Q_n)$, where $Q_n$ is a quadratic form in $(n+1)$
real variables, defined as
\begin{equation}
  \label{eq:97}
  Q_n(y,x_1,\dots,x_{n-1},z)=2yz-(x_1^2+\dots+x_{n-1}^2).  
\end{equation}
Let $\Gamma$ a lattice in $G$. For $t\in\R$ and
$\vx=(x_1,\dots,x_{n-1})\in\R^{n-1}$, we define
\begin{equation}
  \label{eq:9}
  a_t=\left[\begin{smallmatrix} e^t \\ & 1 \\ && \ddots \\ &&& 1\\ &&&&
      e^{-t}
    \end{smallmatrix}
  \right]\in G \quad \text{and} \quad
  u(\vx)=
  \left[
    \begin{smallmatrix} 
      1 & x_1 &\dots& x_{n-1} &\norm{\vx}^2/2\\
      &   1  &     &        & x_1 \\
      &      &\ddots&        &\vdots\\
      &       &     & 1       & x_{n-1} \\
      & & & & 1
    \end{smallmatrix}
  \right]\in G.
\end{equation}

\begin{theo}
  \label{thm:main}
  Let $I$ be a compact interval with nonempty interior and $\phi:I\to
  \R^{n-1}$ be a $C^n$-map such that $\phi^{(1)}(s)\neq 0$ for all
  $s\in I$, and for any sphere or a proper affine subspace $S$ in
  $\R^{n-1}$,
  \begin{equation}
    \label{eq:74}
    \abs{\{s\in I: \phi(s)\in S\}}=0.
  \end{equation}
  Let $\phi_k\xrightarrow{k\to\infty}\phi$ be a convergent sequence in
  $C^n(I,\R^{n-1})$, $x_k\to x_0$ a convergent sequence in $G/\Gamma$
  and $t_k\to\infty$ in $\R$.  Then for any $f\in\Cc(\gmg)$,
  \begin{equation}
    \label{eq:4}
    \lim_{k\to\infty}\frac{1}{\abs{I}}\int_I
    f(a_{t_k}u(\phi_k(s))x_k)\,ds=\int_{\gmg} f\,d\mu_G, 
  \end{equation}
  where $\mu_G$ is the unique $G$-invariant probability measure on
  $\gmg$.
\end{theo}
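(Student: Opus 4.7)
The plan is to show that every weak-$\ast$ subsequential limit $\mu_\infty$ of the probability measures
\[
\mu_k \;=\; \frac{1}{\abs{I}}\int_I\delta_{a_{t_k}u(\phi_k(s))x_k}\,ds
\]
on $\gmg$ equals $\mu_G$, following the Ratner--Dani--Margulis--Mozes--Shah paradigm. Concretely, I would verify three properties of $\mu_\infty$: (a)~it is a probability measure (no mass escapes to the cusp), (b)~it is invariant under the expanding horospherical subgroup $U=\{u(\vx):\vx\in\R^{n-1}\}$, and (c)~it assigns zero mass to each ``singular'' orbit closure of a proper intermediate subgroup of $G$ coming from Ratner's countable list. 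Property (b) will follow from the standard shearing identity $a_tu(\vx)a_{-t}=u(e^t\vx)$ together with $\phi^{(1)}\ne 0$: a parameter interval of length $\sim e^{-t_k}/\norm{\phi_k^{(1)}(s_0)}$ sweeps a unit $U$-translate along the direction $\phi^{(1)}(s_0)$, and the variation of these directions with $s_0$ (controlled by the higher Taylor coefficients) propagates this sweep to full $U$-invariance of $\mu_\infty$.

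The main effort is for (a) and (c), where classical arguments rely on $(C,\alpha)$-good growth properties unavailable for general $C^n$-curves. The new device is a local polynomial approximation. I would partition $I$ into subintervals of length $\tau_k\sim e^{-t_k/n}$ and on each such subinterval $[s_0,s_0+\tau_k]$ write the Taylor expansion $\phi_k(s_0+h)=P_k(h)+R_k(h)$ with $\deg P_k\leq n$ and $\norm{R_k(h)}\leq C\tau_k^n\omega_k(\tau_k)$, where $\omega_k(\tau)\to 0$ uniformly for $\phi_k$ in a $C^n$-neighbourhood of $\phi$. Using $a_{t_k}u(\vy)=u(e^{t_k}\vy)a_{t_k}$ and the commutativity of $U$, one factors
\[
a_{t_k}u(\phi_k(s_0+h))x_k \;=\; u(e^{t_k}P_k(h))\,u(e^{t_k}R_k(h))\,a_{t_k}x_k,
\]
and the choice of $\tau_k$ forces $e^{t_k}\norm{R_k(h)}\to 0$ uniformly, so the error factor is close to the identity in $G$. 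After rescaling $h=\tau_k h'$, the curve $h'\mapsto u(e^{t_k}P_k(\tau_k h'))a_{t_k}x_k$ is a polynomial unipotent trajectory of degree $\leq n$ in $h'\in[0,1]$ and of macroscopic total length $\sim e^{t_k(1-1/n)}$---exactly the setting in which the $(C,\alpha)$-good toolbox of Kleinbock--Margulis and the linearization method of Dani, Margulis, Mozes, and Shah apply. The ``linear $\SL(2,\R)$ dynamics'' alluded to in the introduction is that this rescaling is governed by the natural $\SL(2,\R)$-action on the $(n+1)$-dimensional representation of polynomials of degree $\leq n$, whose generators correspond to the $\SL(2,\R)$-triple spanned by $a_t$, $u$, and its opposite.

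With the polynomial approximation in hand, (a)~follows by applying the Kleinbock--Margulis uniform non-divergence theorem to polynomial trajectories of degree $\leq n$ on each subinterval and summing over the $\sim e^{t_k/n}$ pieces. For~(c), the linearization method assigns to each intermediate subgroup $H$ in Ratner's list a finite-dimensional $G$-representation $V_H$, a closed singular subvariety $S_H\subset V_H$, and a polynomial map $v_H\colon[0,1]\to V_H$ built from $P_k$; concentration of $\mu_\infty$ on the corresponding singular orbit would force, via $(C,\alpha)$-goodness of polynomial curves, the image of $v_H$ to lie in $S_H$. The classification of closed connected subgroups of $\SO(n,1)$ containing $U$ identifies these algebraic conditions (via the visual boundary identification $\partial\HH^n\cong\Sn^{n-1}$) with $\phi$ lying in a sphere or a proper affine subspace of $\R^{n-1}$, which hypothesis~\eqref{eq:74} excludes; hence no singular contribution survives and $\mu_\infty=\mu_G$.

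I expect the main obstacle to be executing the polynomial approximation with all constants uniform in $k$, $s_0\in I$, $\phi_k\to\phi$ in $C^n$, and $x_k\to x_0$. The scale $\tau_k$ must be simultaneously large enough that the rescaled polynomial $e^{t_k}P_k(\tau_k\,\cdot\,)$ is ``long'' in the polynomial-equidistribution sense, yet small enough that the remainder $e^{t_k}R_k(h)$ vanishes uniformly; and one must then sum the linearization estimates across the $\sim e^{t_k/n}$ subintervals without allowing singular contributions to accumulate, which demands a quantitative version of the linearization argument rather than the qualitative statement sufficient on a single fixed polynomial piece. A secondary technical point is the precise matching, via the specific representation theory of $\SO(n,1)$, of the linearization varieties $S_H$ with the spheres and affine subspaces of $\R^{n-1}$ excluded by~\eqref{eq:74}.
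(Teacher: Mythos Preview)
Your overall scheme---nondivergence, unipotent invariance, Ratner, linearization via Taylor approximation on scale $e^{-t_k/n}$---is the paper's, but two of your steps do not go through as written.

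\textbf{Step (b) is not justified.} The shearing identity $a_tu(\vx)a_{-t}=u(e^t\vx)$ applied to the curve gives, for each $s_0$, invariance only along the tangent direction $\phi^{(1)}(s_0)$; since this direction varies with $s_0$, the limit $\mu_\infty$ is not a priori invariant under \emph{any} fixed one-parameter subgroup of $U$, and ``propagation via higher Taylor coefficients'' does not repair this (those terms are $O(e^{-t_k})$ after shearing and disappear in the limit). The paper handles this by first twisting: choose $z\colon I\to\cen(A)$ continuously with $z(s)\cdot\phi^{(1)}(s)=e_1$, and work with the measures $\lambda_k$ pushing forward $s\mapsto z(s)a_{t_k}u(\phi_k(s))x_k$. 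Their limit is then invariant under the \emph{single} one-parameter group $W=\{u(re_1):r\in\R\}$, and Ratner's theorem is applied to $W$-invariant (not $U$-invariant) measures; $G$-invariance emerges only \emph{after} the linearization step. Untwisting to obtain \eqref{eq:4} is a separate argument (Proposition~\ref{prop:P-}), using that $z(s)\in\cen(A)\subset P^-$ and $\mu_G$ is $\cen(A)$-invariant.

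\textbf{The fixed-length partition is too crude for step (c).} As you anticipate, summing $(C,\alpha)$-estimates over $\sim e^{t_k/n}$ pre-chosen pieces does not control accumulation of singular mass. The paper's Basic Lemma (Proposition~\ref{prop:main}) does something sharper: for each $x\in I$ and each vector $v$ in the linearization representation $V=\bigoplus_d\wedge^d\la{g}$, it produces an interval $[s_k,s_k']\ni x$ with $e^{t_k}(s_k'-s_k)^n<C$ \emph{together with} the endpoint lower bound $\norm{a_{t_k}u(\phi_k(s_k))v},\,\norm{a_{t_k}u(\phi_k(s_k'))v}\geq\norm{v}/R_0$ (at whichever endpoint is interior to $I$). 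This lower bound---whose proof by contradiction via the embedding $\SL(2,\R)\cong\SO(2,1)\hookrightarrow\SO(n,1)$ acting on $V$ is what ``linear $\SL(2,\R)$ dynamics'' actually refers to in the introduction, not the rescaling of Taylor polynomials you describe---is precisely what forces the polynomial trajectory $s\mapsto a_{t_k}u(P_{k,x}(s))v$ to exit the tube $\Phi_1$ on $J_k\cap F_1$, so that the $(C,\alpha)$-estimate can be applied to a single connected component $F_1$ of the bad set (Proposition~\ref{prop:CD}) rather than summed across an a priori partition.
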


In fact, we shall obtain the following more general version. Let
$P^-=\{g\in G: \cl{\{a_tga_t\inv:t>0\}} \text{ is compact}\}$. Then
$P^-$ is a proper parabolic subgroup of $G$ and $P^-\backslash G$
naturally identifies with $\SO(n-1)\backslash\SO(n)\cong
\Sn^{n-1}$. Let $\cI:G\to P^-\backslash G\cong \Sn^{n-1}$ be the
corresponding map. We note that under this identification $G$ acts on
$\Sn^{n-1}$ by conformal transformations. For $m\geq 2$ and $g\in G$,
$\cI(\SO(m,1)g)$ is a subsphere of $\Sn^{n-1}$ of dimension $m-1$. We
note that $\cI(\nor(\SO(m,1)))=\cI(\SO(m,1))$. Let
\begin{equation}
  \label{eq:88}
  \sS=\{\cI(\SO(m,1)g):\text{$\nor(\SO(m,1))g\Gamma$ is
    closed},\,2\leq m\leq n-1,\,g\in G\};      
\end{equation}
Then $\sS$ is a countable collection of proper subspheres of
$\Sn^{n-1}$ (\cite[\S(5.2)]{Shah:uniform},\cite[Cor.A]{R:uniform}).

\begin{theo}
  \label{thm:curve} Let $\psi\in C(I,G)$ be such that $\cI\circ\psi\in
  C^n(I,\Sn^{n-1})$ and for any $S\in\sS$,
  \begin{equation}
    \label{eq:90}
    (\cI\circ\psi)^{(1)}(s)\neq 0 \quad\text{and}\quad   \cI\circ\psi(s)\not\in
    S \quad \text{for Lebesgue a.e.\ $s\in I$.}
  \end{equation}
  Then given a sequence $\{\psi_k\}_{k\in\N}\subset C(I,G)$ such that
  \begin{equation}
    \label{eq:98}
    \cI\circ\psi_k\xrightarrow{k\to\infty} \cI\circ\psi_k, \quad
    \text{in $C^n(I,\Sn^{n-1})$},  
  \end{equation}
  and sequences $t_k\to\infty$ in $\R$ and $x_k\to x_0=e\Gamma$ in
  $G/\Gamma$,
  \begin{equation}
    \label{eq:89}
    \lim_{k\to\infty}\frac{1}{\abs{I}}\int_I
    f(a_{t_k}\psi_k(s)x_k)\,ds=\int_{\gmg} f\,d\mu_G,
    \quad \forall f\in\Cc(G/\Gamma). 
  \end{equation}
\end{theo}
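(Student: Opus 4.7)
The plan is to reduce Theorem~\ref{thm:curve} to Theorem~\ref{thm:main} via the decomposition $G = P^- \cdot U$. Since $P^- \cap U = \{e\}$ and $P^-U$ is open dense in $G$ with complement a single Bruhat cell, every $g \in P^-U$ factors uniquely as $g = p \cdot u(\vx)$ with $p \in P^-$ and $\vx \in \R^{n-1}$. Under the identification $\cI: P^-\backslash G \xrightarrow{\sim} \Sn^{n-1}$, the orbit $\cI(U)$ is the complement of a single point, and the orbit map $\cI\circ u: \R^{n-1} \to \cI(U)$ is a smooth conformal diffeomorphism (an inverse stereographic projection) carrying spheres and affine subspaces of $\R^{n-1}$ onto subspheres of $\Sn^{n-1}$. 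The nonvanishing-derivative hypothesis forces $\cI\circ\psi(s)$ to avoid the missing point on a set of full measure, so for a.e.\ $s \in I$ one has the unique decomposition $\psi(s) = p(s)u(\phi(s))$, and similarly $\psi_k(s) = p_k(s)u(\phi_k(s))$ for all sufficiently large $k$. Pulling back by $(\cI\circ u)^{-1}$ transfers the $C^n$-convergence $\cI\circ\psi_k \to \cI\circ\psi$ into $\phi_k \to \phi$ in $C^n$ on compact subsets of the good set, and translates the $\sS$-avoidance hypothesis for $\cI\circ\psi$ into avoidance of a countable family of spheres and affine subspaces by $\phi$.

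Next, I would factor the evolution as
\begin{equation*}
a_{t_k}\psi_k(s)x_k = h_k(s)\cdot y_k(s),\qquad h_k(s) := a_{t_k}p_k(s)a_{-t_k},\qquad y_k(s) := a_{t_k}u(\phi_k(s))x_k.
\end{equation*}
Since $P^- = MAN^-$ with $a_t$ normalizing $MA$ and contracting $N^-$ for $t \geq 0$, the family $\{h_k(s)\}$ remains in a bounded subset of $P^-$ whenever $\{p_k(s)\}$ does. Partition $I$ into subintervals $J_i$ of length $\delta$; on each $J_i$ approximate $h_k(s)$ by the constant $h_{k,i} := h_k(s_i)$ with error controlled by the modulus of continuity of $h_k$. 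For any fixed $h \in P^-$, the translated function $f_h(y) := f(h\cdot y)$ lies in $\Cc(G/\Gamma)$ and satisfies $\int f_h\,d\mu_G = \int f\,d\mu_G$ by $G$-invariance. Applying Theorem~\ref{thm:main} to $\phi_k$, the basepoints $x_k$, and the test function $f_{h_{k,i}}$ on each $J_i$, summing over $i$, and letting $\delta \to 0$ would yield \eqref{eq:89}.

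The principal obstacle is controlling the modulus of continuity of $h_k$ uniformly in $k$. The hypothesis constrains only $\cI\circ\psi_k$, hence only $\phi_k$; the continuous $P^-$-part $p_k$ is essentially unconstrained in $k$, and without a uniform modulus of continuity the piecewise-constant approximation of Step~3 cannot be pushed through. Overcoming this will likely require either passing to a subsequence along which $\{p_k\}$ becomes locally equicontinuous almost everywhere, or replacing $\psi_k$ by a modified sequence $\tilde\psi_k$ with the same $\cI$-image but a tamer $P^-$-part, exploiting that $f(a_{t_k}\psi_k(s)x_k)$ depends on $p_k(s)$ only through the bounded quantity $a_{t_k}p_k(s)a_{-t_k}$ together with the uniform continuity of $f$. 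A secondary complication is the hypothesis gap between Theorem~\ref{thm:main}, which demands $\phi$ avoid \emph{every} sphere and affine subspace of $\R^{n-1}$, and the present $\sS$-avoidance assumption, which excludes only those spheres coming from closed $\nor(\SO(m,1))g\Gamma$-orbits. This is bridged using the rank-one structure of $G$: any sphere or affine subspace of $\R^{n-1}$ corresponds via $\cI\circ u$ to some $\cI(\SO(m,1)g_0)$, and if $\nor(\SO(m,1))g_0\Gamma$ fails to be closed, then its Ratner closure is a strictly larger orbit $\nor(\SO(m',1))g_0'\Gamma$ with $m' > m$; induction on $m$ reduces the problem to a setting where Theorem~\ref{thm:main} applies cleanly.
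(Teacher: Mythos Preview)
Your overall architecture---decompose $\psi_k(s)=p_k(s)u(\phi_k(s))$ via $G=P^-U^+$, conjugate the $P^-$-part through $a_{t_k}$, freeze it on a fine partition, and apply the main equidistribution input on each piece---is exactly what the paper does, packaged there as Proposition~\ref{prop:P-}. The real divergence is in how the hypothesis gap is closed, and here your proposal has a genuine problem.

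You want to reduce to Theorem~\ref{thm:main} via an induction on $m$: if a sphere $S'$ is not in $\sS$, pass to the Ratner closure of the associated orbit and replace $S'$ by a strictly larger sphere. But this does not buy what you need. The difficulty is not to show that $\phi$ avoids every sphere---it plainly need not---but rather that landing in a non-$\sS$ sphere causes no obstruction. Knowing that $\overline{\nor(\SO(m,1))g_0\Gamma}$ is a larger homogeneous set says nothing about whether $\phi(I)$ sits inside $\cI(\SO(m,1)g_0)$ with positive measure, so the induction never gets off the ground. The paper does \emph{not} invoke Theorem~\ref{thm:main} here at all. Instead it re-enters the proof of Theorem~\ref{thm:G-inv} and records (Remark~\ref{rem:sS}, via Remark~\ref{rem:sS1} and Proposition~\ref{prop:S}) that when $g_0=e$ the only spheres that can ever appear in the contradiction step \eqref{eq:80} are the $S_\gamma$ with $\gamma\in\Gamma$, and these are exactly stereographic preimages of elements of $\sS$, because $H\in\cH$ forces $\norone(H)\gamma\Gamma$ to be closed. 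So the $\sS$-avoidance hypothesis already suffices to run Theorem~\ref{thm:G-inv} directly; no induction or passage through Theorem~\ref{thm:main} is needed.

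On the $P^-$-equicontinuity obstacle you flagged: your diagnosis is correct, and neither of your proposed fixes works (you cannot swap $\psi_k$ for a tamer $\tilde\psi_k$ without altering the integral, and no subsequence need be equicontinuous). The paper's Proposition~\ref{prop:P-} in fact \emph{assumes} the $\{\psi_k\}$ converge uniformly in $C(I,G)$, and the paper's proof of Theorem~\ref{thm:curve} applies that proposition without checking this from the stated hypothesis. So the paper shares the gap you identified; it is tacitly assuming uniform convergence of $\psi_k$ in $C(I,G)$, consistent with the stronger formulations in Theorems~\ref{thm:M} and~\ref{thm:uniform}.
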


From this statement we will derive the following uniform version. Let
$\ell_h:G\to G$ denote the left translation by $h\in G$.

\begin{theo}
  \label{thm:uniform}
  Let $\psi:I\to G$ be a $C^n$-map such that if $h\in G$ and $S$ is a
  proper subsphere of $\Sn^{n-1}$ then for Lebesgue a.e.\ $s\in I$,
  \begin{equation}
    \label{eq:94}
    (\cI\circ\ell_h\circ\psi)^{(1)}(s)\neq 0 \quad\text{and}\quad 
    (\cI\circ\ell_h\circ\psi)(s)\not\in S.
  \end{equation}
  Then given $f\in\Cc(G/\Gamma)$, a compact set $\cK\subset G/\Gamma$
  and $\epsilon>0$, there exists a neighbourhood $\Omega$ of $\psi$ in
  $C^n(I,G)$ and a compact set $\cC$ in $G$ such that
  \begin{equation}
    \label{eq:83}
    \Abs{\frac{1}{\abs{I}}\int_If(g\psi_1(s)x)\,ds-\int_I
      f\,d\mu_G}<\epsilon, \quad \forall \psi_1\in\Omega,\ x\in\cK,\text{
      and } g\in G\smallsetminus \cC.
  \end{equation}
\end{theo}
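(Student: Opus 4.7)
The plan is a contradiction argument that reduces the uniform statement to the sequential Theorem~\ref{thm:curve} via compactness of $K:=\SO(n)$ and of $\cK$. Assuming~\eqref{eq:83} fails, I can extract $\epsilon>0$, $f\in\Cc(G/\Gamma)$, and sequences $\psi_k\to\psi$ in $C^n(I,G)$, $x_k\in\cK$, and $g_k\in G$ leaving every compact set, such that
\[
\Abs{\frac{1}{\abs{I}}\int_I f(g_k\psi_k(s)x_k)\,ds - \int_{G/\Gamma} f\,d\mu_G}\;\geq\;\epsilon
\]
for every $k$. Using the Cartan $KA^+K$ decomposition I write $g_k=k_k a_{t_k} h_k$ with $k_k,h_k\in K$ and $t_k\geq 0$; since $K$ is compact and $g_k\to\infty$, necessarily $t_k\to\infty$. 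Passing to a subsequence, I may assume $k_k\to k_\infty$, $h_k\to h_\infty$ in $K$ and $x_k\to x_\infty=z_\infty\Gamma$ in $\cK$, with lifts $z_k\to z_\infty$ in $G$.

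The next step is to absorb the $K$-factors and the base-point lift into the curve by setting $\phi_k(s):=h_k\psi_k(s)z_k$ and $\phi(s):=h_\infty\psi(s)z_\infty$, so that $\phi_k\to\phi$ in $C^n(I,G)$ and
\[
f\bigl(g_k\psi_k(s)x_k\bigr)=f\bigl(k_k\,a_{t_k}\phi_k(s)\,\Gamma\bigr).
\]
Since $f$ is uniformly continuous and $k_k\to k_\infty$, I can replace $f\circ\ell_{k_k}$ by $\tilde f:=f\circ\ell_{k_\infty}$ with an error that tends to zero uniformly on $G/\Gamma$; and $\int\tilde f\,d\mu_G=\int f\,d\mu_G$ by $G$-invariance of $\mu_G$. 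The problem thus reduces to Theorem~\ref{thm:curve} applied at the base point $e\Gamma$ to $\tilde f$ and the $C^n$-perturbations $\phi_k$ of $\phi$: that theorem would force the integrals to converge to $\int f\,d\mu_G$, contradicting the negation---provided $\phi$ satisfies hypothesis~\eqref{eq:90}.

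Verifying~\eqref{eq:90} for $\phi$ is the technical crux. Since $\cI:G\to P^-\backslash G\cong\Sn^{n-1}$ intertwines right translation on $G$ with the conformal $G$-action on $\Sn^{n-1}$,
\[
\cI\circ\phi(s)\;=\;\bigl(\cI\circ\ell_{h_\infty}\circ\psi\bigr)(s)\cdot z_\infty,
\]
where $z_\infty$ acts conformally. Conformal transformations of $\Sn^{n-1}$ are diffeomorphisms and carry proper subspheres to proper subspheres, so non-vanishing of $(\cI\circ\phi)^{(1)}$ a.e.\ is equivalent to non-vanishing of $(\cI\circ\ell_{h_\infty}\circ\psi)^{(1)}$ a.e., and the condition $\cI\circ\phi(s)\not\in S$ for each $S\in\sS$ is equivalent to $(\cI\circ\ell_{h_\infty}\circ\psi)(s)\not\in S\cdot z_\infty\inv$. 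Since $S\cdot z_\infty\inv$ is again a proper subsphere of $\Sn^{n-1}$, both conditions hold a.e.\ by hypothesis~\eqref{eq:94} applied with $h=h_\infty$ and the subsphere $S\cdot z_\infty\inv$.

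The main obstacle---and what dictates the precise form of~\eqref{eq:94}---is the need to absorb simultaneously the left factor $h_\infty$ and the right factor $z_\infty$ produced by the compactness extractions. The left factor is handled by quantifying over all $h\in G$ in~\eqref{eq:94}; the right factor transports the $\Gamma$-defined collection $\sS$ to a different collection of subspheres, which is why \eqref{eq:94} must quantify over \emph{all} proper subspheres of $\Sn^{n-1}$ rather than only over $\sS$. Once this robust formulation is in place, the contradiction argument above runs without further issues.
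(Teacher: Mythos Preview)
Your proposal is correct and follows essentially the same route as the paper: negate the conclusion, extract sequences, use the Cartan decomposition $G=KA^+K$ and compactness of $K$ and $\cK$ to pass to convergent subsequences, absorb the right $K$-factor and the base-point lift into the curve, handle the left $K$-factor via uniform continuity of $f$ and $G$-invariance of $\mu_G$, and then invoke Theorem~\ref{thm:curve} at $e\Gamma$. Your explanation of why hypothesis~\eqref{eq:94} must quantify over all proper subspheres (because the right factor $z_\infty$ conformally transports $\sS$ to a different collection) makes explicit a point the paper only alludes to in passing.
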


\subsubsection*{Acknowledgment} {\small The author would like to thank
  Elon Lindenstrauss for discussions which led to some of the ideas
  used in Proposition~\ref{prop:main}. Thanks are due to S.G. Dani for
  useful remarks on an earlier version of this article.}

\section{Linear dynamics and growth properties of functions}
\label{sec:linear-dynamics}

Let $V=\oplus_{d=1}^{\dim{\la{g}}} \wedge^d\la{g}$, and consider the
$\oplus_{d=1}^{\dim{\la{g}}}\wedge^d\Ad$ representation of $G$ on
$V$. We fix an inner product $\langle \cdot ,\cdot \rangle$ on $V$ and
let $\norm{\cdot}$ denote the associated norm.

For $\mu\in\R$, we define
\begin{equation}
  \label{eq:11}
  V_\mu=\{v\in V:a_tv=e^{\mu t} v,\ t\in\R\}.
\end{equation}

If $\{\vx_1,\dots, \vx_{\dim\la{g}}\}$ is a basis of $\la{g}$
consisting of eigen-vectors of $\{a_t\}$, then there is a basis of
$V_\mu$ consisting of elements of the form
$\vx_{i_1}\wedge\cdots\wedge \vx_{i_d}$. The eigenvalues of $a_t$ on
$\la{g}$ other than $1$ are: $e^{t}$ with multiplicity $n-1$, and
$e^{-t}$ with multiplicity $n-1$. Therefore for $t>0$ the smallest
eigenvalue of $a_t$ on $V$ is $e^{-(n-1)t}$ and the largest one is
$e^{(n-1)t}$.  Therefore
\begin{equation}
  \label{eq:12}
  V=\oplus_{\mu=-(n-1)}^{n-1} V_\mu.
\end{equation}
Let $q_\mu:V\to V_\mu$ be the projection associated to this
decomposition.

\begin{notn}
  \label{notn:phi}
  Let $\phi_k\to \phi$ be a convergent sequence in $C^n(I,\R^{n-1})$
  such that
  \begin{equation}
    \label{eq:43}
    \rho_0:=\inf_{s\in I}\norm{\phi^{(1)}(s)}>0.  
  \end{equation}

  Let $M=\cen(A)\cap \SO(n)$. Then $M\cong\On(n-1)$ and
  $\cen(A)=AM$. We define the action of any $z\in \cen(A)$ on
  $\R^{n-1}$ by the relation, $u(z\cdot\vv):=zu(\vv)z\inv$ for all
  $\vv\in\R^{n-1}$. Then $M$ acts on $\R^{n-1}$ via its identification
  with $\On(n-1)$, and $a_t\cdot\vv=e^t\vv$.
\end{notn}

\begin{prop}[Basic Lemma]
  \label{prop:main}
  Given $C>0$, there exists $R_0>0$ such that for any sequence
  $t_k\to\infty$ in $\R$ there exists $k_0\in\N$ such that for any
  $x\in I=[a,b]$ and $v\in V$, there exists an interval
  $[s_k,s'_k]\subset I$ containing $x$ such that for any $k\geq k_0$,
  the following conditions are satisfied:
  \begin{align}
    \label{eq:5}
    e^{t_k}(s'_k-s_k)^n&<C,\\
    \label{eq:5b}
    \norm{a_{t_k}u(\phi_k(s_k))v}&\geq \norm{v}/R_0, \quad \text{if
      $s_k>a$,}\\
    \norm{a_{t_k}u(\phi_k(s'_k))v}&\geq \norm{v}/R_0, \quad \text{if
      $s'_k<b$.}
  \end{align}
\end{prop}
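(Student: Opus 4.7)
The plan is to construct $s_k$ and $s_k'$ greedily and then prove the width bound \eqref{eq:5} by contradiction. Define $s_k\in [a,x]$ as the largest value such that either $s_k=a$ or $\norm{a_{t_k}u(\phi_k(s_k))v}\geq\norm{v}/R_0$, and similarly $s_k'\in [x,b]$ as the smallest such value. By construction, condition \eqref{eq:5b} and its analogue at $s_k'$ hold. Everything then reduces to choosing a universal $R_0>0$, depending only on $C$ and on the limit curve $\phi$, so that $e^{t_k}(s_k'-s_k)^n<C$ uniformly in $x\in I$, $v\in V$, and $k\geq k_0$.

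Suppose this fails. After passing to subsequences one obtains $x_k\to x_*\in I$, unit vectors $v_k\to v_*\in V$, and intervals $J_k\ni x_k$ of length at least $L_k:=(C/e^{t_k})^{1/n}$ contained in $I$, on which $\norm{a_{t_k}u(\phi_k(s))v_k}<1/R_0$. The commutation $a_t u(\vy)=u(e^t\vy)a_t$ recasts the quantity as $\norm{u(e^{t_k}\phi_k(s))\,a_{t_k}v_k}$. Let $\phi_k^*$ be the degree-$(n-1)$ Taylor polynomial of $\phi_k$ at $x_k$. By $C^n$-convergence, $\norm{\phi_k^{(n)}}_\infty\leq M$ uniformly, so on $J_k$,
\[
e^{t_k}\norm{\phi_k(s)-\phi_k^*(s)}\leq \frac{Me^{t_k}L_k^n}{n!}=\frac{MC}{n!}.
\]
Hence $h_k(s):=u(e^{t_k}(\phi_k(s)-\phi_k^*(s)))$ lies in a fixed compact set $\cK\subset U$, and
\[
a_{t_k}u(\phi_k(s))v_k = u(e^{t_k}\phi_k^*(s))\,h_k(s)\,a_{t_k}v_k.
\]

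The next step is to reparametrize $s=x_k+L_k\sigma$ with $\sigma\in[-1,1]$. The map $\sigma\mapsto u(e^{t_k}\phi_k^*(x_k+L_k\sigma))$ is polynomial in $\sigma$ of degree at most $2(n-1)$, with linear-in-$\sigma$ coefficient proportional to $e^{t_k}L_k\phi_k^{(1)}(x_k)$. Projectivizing in $\mathbb{P}(V)$ after normalizing by $\norm{a_{t_k}v_k}$, one extracts, by compactness of $\mathbb{P}(V)$ and of $\cK$, a limiting polynomial curve $\gamma_*\colon[-1,1]\to\mathbb{P}(V)$ of bounded degree; the hypothesis $\norm{\phi^{(1)}(x_*)}\geq\rho_0>0$ from \eqref{eq:43} forces $\gamma_*$ to be nonconstant. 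The Kleinbock--Margulis $(C,\alpha)$-good property for polynomials of bounded degree in one variable \cite{Klein+Mar:Annals98} then yields that the set of $\sigma\in[-1,1]$ on which $\norm{\gamma_*(\sigma)}$ is smaller than $R_0^{-1}$ times its supremum has measure $O(R_0^{-\alpha})$, which is less than $1$ for $R_0$ large, contradicting the smallness assumption.

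The main obstacle is the extraction and nondegeneracy of $\gamma_*$. Under $a_{t_k}$, each weight space $V_\mu\subset V$ is scaled by $e^{\mu t_k}$, so normalizing $a_{t_k}v_k$ concentrates its mass in the highest weight $\mu^*$ present in $v_*$; one must then track how the polynomial curve $u(e^{t_k}\phi_k^*(x_k+L_k\sigma))$ generates higher-weight components from $(v_*)_{\mu^*}$, which requires a careful matching between the rescaling $L_k\sim e^{-t_k/n}$ in $\sigma$ and the $e^{t_k}$-expansion of the $\vy$-argument. This is precisely the linear $\SL(2,\R)$-dynamics observation alluded to in the introduction: on each $\SL(2,\R)$-subrepresentation of $V$, the joint action of $\{a_t\}$ and the abelian unipotents inside $U$ has a scale-invariant structure under the parabolic rescaling $(s,t)\mapsto(L_k\sigma,t_k)$, producing a well-defined nontrivial limit polynomial $\SL(2,\R)$-orbit. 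The hypothesis $\phi^{(1)}(x_*)\neq 0$ is exactly what prevents the leading linear-in-$\sigma$ term from vanishing in the limit, ensuring that $\gamma_*$ is nonconstant.
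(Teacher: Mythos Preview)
Your overall contradiction setup is sound, and the Taylor reduction to a polynomial curve of bounded degree is a natural move (the paper itself uses exactly this device, but only \emph{after} the Basic Lemma is proved, in Corollary~\ref{cor:phi-poly}). The argument breaks down, however, at the $(C,\alpha)$-good step. The Kleinbock--Margulis estimate controls the measure of the sublevel set $\{\sigma:\norm{\gamma_*(\sigma)}<\epsilon\sup\norm{\gamma_*}\}$; it compares $\gamma_*$ to its own supremum, not to $\norm{v}$. Your ``smallness assumption'' is $\norm{a_{t_k}u(\phi_k(s))v_k}<R_0^{-1}\norm{v_k}$ on all of $J_k$, so in particular the supremum itself is below $R_0^{-1}$, and the $(C,\alpha)$-good bound yields no contradiction. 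To close the loop you would need $\sup_{\sigma}\norm{\gamma_k(\sigma)}\gtrsim\norm{v_k}$ uniformly, which is precisely the content of the proposition. Projectivising does not help: passing to $\mathbb{P}(V)$ destroys the comparison with $\norm{v}$ that you need. Your last paragraph correctly identifies this as ``the main obstacle'' and names the right ingredients (weight-space structure, $\SL(2,\R)$-subrepresentations, matching of scales), but it is a description of the difficulty, not a resolution of it.

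The paper's proof avoids $(C,\alpha)$-goodness altogether. With $w_k=u(\phi_k(r_k))v_k\to w_0$ and $R_k\to\infty$, the smallness hypothesis gives $\norm{q_\mu(u(\phi_{k,r_k}(s))w_k)}\le R_k^{-1}e^{-\mu t_k}$ for every weight $\mu\ge 0$; in particular $q_\mu(w_0)=0$ for $\mu\ge 0$, so the top weight $\mu_0$ of $w_0$ satisfies $-1\ge\mu_0\ge-(n-1)$. Taking $\mu=1$ and using $e^{t_k}\delta_k^n\ge C$ turns the bound into $\norm{q_1(u(\phi_{k,r_k}(s))w_k)}\le R_k^{-1}C^{-1}\delta_k^n$; a Rolle-type iterated mean-value estimate then forces all derivatives of $s\mapsto q_1(u(\phi_{0,r_0}(s))w_0)$ up to order $n$ to vanish at $s=0$, so this function is $o(s^n)$. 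On the other hand, decomposing $V$ into irreducible modules for $\SO(2,1)\cong\mathrm{PSL}(2,\R)$ and using that $\phi^{(1)}(r_0)\ne 0$, one computes explicitly that the leading term of $q_1(u(he_1)\theta(s)w_0)$ is $h^{1-\mu_0}q_{\mu_0}(P_{W_0}(\theta(s)w_0))$, which is nonzero of order $s^{1-\mu_0}$ with $1-\mu_0\le n$. This is the ``linear $\SL(2,\R)$-dynamics observation'': it produces an \emph{absolute} lower bound on $q_1$ of a specific order in $s$, yielding the contradiction directly without any appeal to $(C,\alpha)$-goodness or to the existence of a limiting projective curve.
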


\begin{proof}
  If for every $R_0>0$ the above conditions are not satisfied, then
  after passing to a subsequence, there exist sequences $t_k\to\infty$
  and $R_k\to\infty$ in $\R$, $v_k\to v_0$ in $V$ with $\norm{v_0}=1$,
  and $[r_k,r'_k]\subset I$ such that $r_k\to r_0$, $r'_k\to r_0$ and
  the following holds:
  \begin{align}
    \label{eq:3}
    \sup_{r_k\leq s\leq r_k'}\norm{a_{t_k}u(\phi_k(s))v_k}&\leq R_k\inv\\
    \label{eq:3b}
    e^{t_k}\delta_k^n&\geq C, \quad\text{where $\delta_k=r_k'-r_k$.}
  \end{align}
  
  For any $k\in\N$, let $w_k=\phi_k(r_k)v_k$ and
  \[
  \phi_{k,r_k}(s):=\phi_k(r_k+s)-\phi_k(r_k), \quad\forall
  s\in[a-r_k,b-r_k].
  \]
  Then
  \begin{align}
    \label{eq:8}
    \sup_{s\in[0,\delta_k]}\norm{a_{t_k}u(\phi_{k,r_k}(s))w_k} \leq
    R_k\inv.
  \end{align}
  Therefore, for any $0\leq \mu\leq n-1$,
  \begin{equation}
    \label{eq:13}
    \sup_{s\in[0,\delta_k]}\norm{q_{\mu}(u(\phi_{k,r_k}(s))w_k)}\leq
    R_k\inv e^{-\mu t_k}.
  \end{equation}
  Then by \eqref{eq:3b} we get
  \begin{equation}
    \label{eq:14}
    \sup_{s\in [0,\delta_k]}\norm{q_{\mu}(u(\phi_{k,r_k}(s))w_k)}\leq
    R_k\inv C\inv \delta_k^{n\mu}.
  \end{equation}

  Putting $\mu=1$ in \eqref{eq:14}, for any $v\in V_1$ with
  $\norm{v}=1$,
  \begin{equation}
    \label{eq:15}
    \sup_{s\in[0,\delta_k]}
    \abs{\innprod{u(\phi_{k,r_k}(s)w_k)}{v}}\leq R_k\inv
    C\inv\delta_k^{n}. 
  \end{equation}

  We define
  \[
  \phi_{0,r_0}(s)=\phi(r_0+s)-\phi(s),\quad \forall s\in[a-r_0,b-r_0].
  \]

  As $k\to\infty$, we have $R_k\inv\to 0$, $w_k\to
  w_0=u(\phi(r_0))v_0$, and $\delta_k\to 0$. Therefore by
  \eqref{eq:15},
  \begin{equation}
    \label{eq:24}
    q_{\mu}(u(\phi_{0,r_0}(0))w_0)=q_\mu(w_0)=0,\quad\forall\, 0\leq \mu\leq n-1.
  \end{equation}

  To derive estimate on higher derivatives from \eqref{eq:15} we will
  use the following elementary observation: If $\psi\in
  C^m([0,\delta],\R)$, then there exists $\xi\in (0,\delta)$ such that
  \begin{equation}
    \label{eq:96}
    \abs{\psi^{(m)}(\xi)}\leq 2^m3^{m(m-1)/2}\delta^{-m}\sup_{s\in
      I}\abs{\psi(s)}.
  \end{equation}
  
  To prove this by induction on $m$, we assume that there exists
  $\xi_1\in(0,\delta/3)$, and $\xi_2\in(2\delta/3,\delta)$ such that
  \[
  \abs{\psi^{(m-1)}(\xi_i)}\leq
  2^{m-1}3^{(m-1)(m-2)/2}(\delta/3)^{-(m-1)}\sup_{s\in
    I}\abs{\psi(s)}.
  \]
  Then by Rolls theorem, there exists $\xi\in(\xi_1,\xi_2)$ such that
  \[
  \abs{\psi^{(m)}(\xi)}\leq
  \abs{\psi^{(m-1)}(\xi_2)-\psi^{(m-1)}(\xi_1)}(\xi_2-\xi_1)\inv.
  \]
  Now \eqref{eq:96} follows, because $\abs{\xi_2-\xi_1}\geq \delta/3$.

  Combining \eqref{eq:15} with the above observation, for each $1\leq
  m\leq n$, and each $k$ and there exists $\xi_m(k)\in(0,\delta_k)$
  such that
  \begin{equation}
    \label{eq:16}
    \innprod{u(\phi^{(m)}_{k,r_k}(\xi_m(k)))w_k}{v}
    \leq (2^m3^{m(m-1)/2}C\inv)\delta_k^{n-m}R_k\inv. 
  \end{equation}

  There exists $a_0<b_0$ such that $0\in
  [a_0,b_0]\subset[a-r_0,b-r_0]$.  Then $[a_0,b_0]\subset
  [a-r_k,b-r_k']$ for all but finitely many $k$. As $k\to\infty$, we
  have $R_k\to\infty$, $\xi_m(k)\to 0$, and $\phi_{k,r_k}\to
  \phi_{0,r_0}$ in $C^n([a_0,b_0],\R^{n-1})$. Therefore by
  \eqref{eq:24} and \eqref{eq:16},
  \begin{equation}
    \label{eq:17}
    \innprod{u(\phi^{m}_{0,r_0}(0))w_0}{v}=0, \quad\forall v\in V_1,\
    \forall\,0\leq m\leq n.
  \end{equation}
  Hence due to Taylor's expansion,
  \begin{equation}
    \label{eq:27}
    \lim_{s\to 0}\norm{q_1(u(\phi_{0,r_0}(s))w_0)}/s^{n}=0.
  \end{equation}

  Next we will show that \eqref{eq:27} leads to a contradiction, using
  finite dimensional representations of $\SL(2,\R)$.

  In view of \eqref{eq:97}, let
  $H=\SO(Q_2)=\SO(2,1)\hookrightarrow\SO(n,1)$. Then $H$ is generated
  by $\{u(se_1)\}_{s\in\R}$, $A=\{a_t\}_{t\in\R}$ and
  $\{\trn{u(te_1)}\}_{t\in\R}$, where
  $e_1=(1,0,\dots,0)\in\R^{n-1}$. We realize $H$ as the image of
  $\SL(2,\R)$ under the Adjoint representation on its Lie algebra
  $\la{SL}(2,\R)$ such that $\diag(e^t,e^{-t})\in\SL(2,\R)$ maps to
  $a_{2t}\in H$.

  Let $\cW$ be a finite collection of irreducible $H$-submodules of
  $V$ such that
  \begin{equation}
    \label{eq:10}
    V=\oplus_{W\in\cW} W.
  \end{equation}
  For any $W\in \cW$, let $P_W:V\to W$ denote the projection with
  respect to the decomposition \eqref{eq:10}. In view of
  Notation~\ref{notn:phi}, for any $s\in [a-r_0,b-r_0]$, there exists
  $\theta(s)\in M\subset \On(n-1)$ such that
  \begin{equation}
    \label{eq:18}
    \theta(s)\cdot\phi_{0,r_0}(s)=\norm{\phi_{0,r_0}(s)}e_1.
  \end{equation}
  Let
  \begin{equation}
    \label{eq:21}
    \mu_0=\max\{\mu:q_{\mu}(w_0)\neq 0\}.
  \end{equation} 
  Then $\mu_0=\max\{\mu:q_{\mu}(zw_0)\neq 0\}$ for any $z\in M$. Let
  $W_0\in\cW$ be such that
  \begin{equation}
    \label{eq:19}
    P_{W_0}(q_{\mu_0}(\theta(0)\cdot w_0))\neq 0.  
  \end{equation}
  Therefore there exist $a_1<b_1$ such that $0\in[a_1,b_1]\subset
  [a-r_0,b-r_0]$ and
  \begin{align}
    \label{eq:20}
    \eta_0:=\inf_{s\in[a_1,b_1]}
    \norm{P_{W_0}(q_{\mu_0}(\theta(s)\cdot w_0))}>0.
  \end{align}
  By \eqref{eq:12} and \eqref{eq:24} we have that $-1\geq \mu_0\geq
  -(n-1)$. Recall that $\phi_{0,r_0}(0)=0$ and by \eqref{eq:43},
  $\abs{\phi^{(1)}(r)}\geq \rho_0$ for all $r\in I$. Let $s\in
  [a_1,b_1]$ and
  \begin{equation}
    \label{eq:28}
    h=\norm{\phi_{0,r_0}(s)}=\norm{\phi^{(1)}(r)}s\geq \rho_0 s, \quad\text{for some
      $r\in [a_1,b_1]$}.
  \end{equation}
  Then
  \begin{equation}
    \label{eq:99}
    \begin{split}
      \theta(s)q_1(u(\phi_{0,r_0}(s))w_0)&=q_1(\theta(s)u(\phi_{0,r_0}(s))w_0)\\
      &=q_1(u(he_1)\theta(s)w_0).
    \end{split}
  \end{equation}
  Now by the standard description of an irreducible representation of
  $\SL(2,\R)$, we have
  \begin{equation}
    \label{eq:23}
    \begin{split}
      &P_{W_0}(q_1(u(he_1)\theta(s)w_0))=q_1(u(he_1)P_{W_0}(\theta(s)w_0))\\
      &=h^{1-\mu_0}q_{\mu_0}(P_{W_0}(\theta(s)w_0)) +
      \sum_{\mu\leq\mu_0-1} h^{1-\mu} q_\mu(P_{W_0}(\theta(s)w_0)).
    \end{split}
  \end{equation}
  Since $P_{W_0}$ is norm decreasing and $\theta(s)\in \On(n-1)$, by
  \eqref{eq:20} and \eqref{eq:28}, we conclude that
  \begin{equation}
    \label{eq:26}
    \lim_{s\to 0} 
    \norm{q_1(u(\phi_{0,r_0}(s))w_0)}/s^{1-\mu_0}\geq\eta_0\rho_0^{1-\mu_0}>0.
  \end{equation}
  Since $0<1-\mu_0\leq n$, this contradicts \eqref{eq:27}.
\end{proof}

\begin{notn}
  \label{notn:phi-poly}
  For any $x\in I$, we define
  \begin{equation}
    \label{eq:51}
    P_{k,x}(s)=\phi_k(x)+\phi_k^{(1)}(x)s+\dots+\phi_k^{(n)}(x)s^n,
    \quad \forall s\in\R.
  \end{equation}
\end{notn}

\begin{cor}
  \label{cor:phi-poly}
  Let $R_0>0$ be as in Proposition~\ref{prop:main} for $C=1$. Then
  given a sequence $t_k\to\infty$ and $c>0$ there exists $k_1\in\N$
  such that for any $k\geq k_1$ and $x\in I=[a,b]$ there exist
  $s_k,s'_k\in I$ with $x\in[s_k,s'_k]$ such that for any $v\in V$, we
  have
  \begin{align}
    \label{eq:47a}
    e^{t_k}(s'_k-s_k)^n&<1\\
    \label{eq:47}
    \sup_{s_k\leq s\leq s_k'}
    \norm{a_{t_k}u(\phi_{k}(s))v-a_{t_k}u(P_{k,x}(s))v}&\leq
    c\norm{a_{t_k}u(\phi(x))v}\\
    \label{eq:47b}
    \norm{a_{t_k}u(\phi_{k}(s_k))v}&\geq \norm{v}/R_0, \quad\text{if
      $s_k>a$}\\
    \label{eq:47c}
    \norm{a_{t_k}u(\phi_{k}(s'_k))v}&\geq \norm{v}/R_0, \quad \text{if
      $s'_k<b$.}
  \end{align}
\end{cor}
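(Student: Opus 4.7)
The plan is to derive (\ref{eq:47a}), (\ref{eq:47b}), and (\ref{eq:47c}) directly from Proposition \ref{prop:main} (applied with $C=1$), and to establish the new approximation estimate (\ref{eq:47}) by combining a Taylor expansion of $\phi_k$ with the group-theoretic identities $u(\vy_1+\vy_2)=u(\vy_1)u(\vy_2)$ and $a_tu(\vy)a_t\inv=u(e^t\vy)$, both immediate from the matrix form (\ref{eq:9}).

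The first step yields (\ref{eq:47a}), (\ref{eq:47b}), (\ref{eq:47c}) immediately: apply Proposition \ref{prop:main} with $C=1$ to obtain $R_0$ and, for the given sequence $t_k\to\infty$, a threshold $k_1$ such that for every $k\geq k_1$ and every $x\in I$, the proposition furnishes an interval $[s_k,s_k']\ni x$ with precisely the properties claimed by (\ref{eq:47a}), (\ref{eq:47b}), (\ref{eq:47c}).

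For (\ref{eq:47}) I would interpret $P_{k,x}$ as the degree-$n$ Taylor polynomial of $\phi_k$ at $x$ and control the remainder $r_k(s):=\phi_k(s)-P_{k,x}(s)$. Since $\phi_k\to\phi$ in $C^n(I,\R^{n-1})$, the family $\{\phi_k^{(n)}\}_k$ is uniformly equicontinuous on $I$ and admits a common modulus of continuity $\omega$ with $\omega(h)\to 0$. Taylor's formula combined with $\abs{s-x}\leq s_k'-s_k<e^{-t_k/n}$ from (\ref{eq:47a}) then gives
$$\sup_{s\in[s_k,s_k']} e^{t_k}\norm{r_k(s)}\leq\omega(e^{-t_k/n})/n!\xrightarrow{k\to\infty} 0$$
uniformly in $x\in I$. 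Writing $\phi_k=P_{k,x}+r_k$ and applying the group identities yields
$$a_{t_k}\bigl(u(\phi_k(s))-u(P_{k,x}(s))\bigr)v=u(e^{t_k}P_{k,x}(s))\bigl(u(e^{t_k}r_k(s))-I\bigr)a_{t_k}v,$$
and since $u$ acts polynomially on $V$ with $u(0)=I$, the inner factor $u(e^{t_k}r_k(s))-I$ has operator norm uniformly $o(1)$.

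The main obstacle is then comparing the outer factor $u(e^{t_k}P_{k,x}(s))$ to the reference $u(e^{t_k}\phi(x))$, whose sizes are both unbounded. I would handle this via the factorization $u(e^{t_k}P_{k,x}(s))=u(e^{t_k}\phi(x))\cdot u(e^{t_k}(P_{k,x}(s)-\phi(x)))$ and the rescaling $\sigma=e^{t_k/n}(s-x)\in[-1,1]$, under which $e^{t_k}(P_{k,x}(s)-\phi(x))$ becomes a $\sigma$-polynomial of degree $n$ with coefficients $\frac{\phi_k^{(j)}(x)}{j!}e^{(1-j/n)t_k}$ (plus an $o(1)$ correction absorbing $\phi_k-\phi$ uniformly in $x$). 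The technical heart of the argument is showing that the $o(1)$ operator norm of $u(e^{t_k}r_k(s))-I$ absorbs the growth of this bounded-degree polynomial factor when applied to $a_{t_k}v$, uniformly in $v\in V$, giving the multiplicative bound relative to $\norm{u(e^{t_k}\phi(x))a_{t_k}v}=\norm{a_{t_k}u(\phi(x))v}$; for $k$ sufficiently large this yields (\ref{eq:47}).
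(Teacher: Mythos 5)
Your handling of \eqref{eq:47a}, \eqref{eq:47b}, \eqref{eq:47c} is correct and coincides with the paper: these come straight from Proposition~\ref{prop:main} with $C=1$. The Taylor/equicontinuity estimate on $r_k(s):=\phi_k(s)-P_{k,x}(s)$, giving $e^{t_k}\norm{r_k(s)}\to 0$ uniformly, is also the step the paper takes.

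The gap is in \eqref{eq:47}, and you partly create it yourself by choosing the wrong factorization. You write
$a_{t_k}\bigl(u(\phi_k(s))-u(P_{k,x}(s))\bigr)v=u(e^{t_k}P_{k,x}(s))\bigl(u(e^{t_k}r_k(s))-I\bigr)a_{t_k}v$,
which puts the growing operator $u(e^{t_k}P_{k,x}(s))$ \emph{outside} the small factor; you then notice, correctly, that this unbounded outer factor and the reference $u(e^{t_k}\phi(x))$ cannot be compared in operator norm, and you hand off the ``technical heart'' without proving it. The rescaling $\sigma=e^{t_k/n}(s-x)$ does not resolve this: no matter how you reparametrize, a bound of the form $\norm{A B w}\lesssim \norm{Cw}$ with $A$ of unbounded operator norm does not follow from $\norm{B}=o(1)$ alone. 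So as written your proposal does not establish \eqref{eq:47}.

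The missing idea is to use that $U^+$ is abelian and put the small factor on the \emph{left}: $u(\phi_k(s))=u(r_k(s))u(P_{k,x}(s))$, hence
$a_{t_k}\bigl(u(\phi_k(s))-u(P_{k,x}(s))\bigr)v=\bigl(u(e^{t_k}r_k(s))-I\bigr)\,a_{t_k}u(P_{k,x}(s))v$,
so the estimate needed is only $\norm{u(y)-I}_V\leq c'$ for $\abs{y}\leq\delta$ (a genuine small-argument continuity bound at the identity, which holds because $u(0)=I$ and $y\mapsto u(y)$ is polynomial), applied to $y=e^{t_k}r_k(s)$ with $\abs{y}\leq\delta$ by the Taylor bound. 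This gives a multiplicative bound in terms of $\norm{a_{t_k}u(P_{k,x}(s))v}$, which by the reverse triangle inequality is comparable to $\norm{a_{t_k}u(\phi_k(s))v}$; no comparison of two unbounded operators is ever required. This is precisely what the paper's proof does: it factors out a common $u(\phi_k(x))v$, writes both terms as $u(y_i)w_k$ with $\abs{y_1-y_2}\leq\delta$, and then uses only the smallness of $u(y_1-y_2)-I$ to control the difference. Your proposal stops exactly at the point where this left-factorization trick is needed, so as it stands it is incomplete.
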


\begin{proof}
  Given a sequence $t_k\to\infty$, let $k_0\in\N$ be as in
  Proposition~\ref{prop:main} for the above choices of $C=1$ and
  $R_0>0$. Let $x\in I$. By Proposition~\ref{prop:main} for any $k\geq
  k_0$ there exists a subinterval $J_k=[s_k,s'_k]$ containing $x$ such
  that
  \begin{equation}
    \label{eq:100}
    \abs{J_k}^n\leq e^{-{t_k}},    
  \end{equation}
  and \eqref{eq:47b} and \eqref{eq:47c} hold for all $v\in V$.
  
  Let $\delta>0$ be such that
  \begin{equation}
    \label{eq:52}
    \norm{u(y_1)-u(y_2)}_V\leq c,\quad \forall \abs{y_1-y_2}\leq \delta,\ y_1,y_2\in \R^{n-1},
  \end{equation}
  where $\norm{\cdot}_V$ denotes the operator norm.

  By \eqref{eq:100} and equi-continuity of the family
  $\{\phi_k^{(n)}\}$, there exists $k_1\geq k_0$ such that
  \begin{equation}
    \label{eq:54}
    \norm{\phi_k^{(n)}(x_1)-\phi_k^{(n)}(x_2)}\leq \delta, \quad \forall
    x_1,x_2\in J_k, \ \forall k\geq k_1.
  \end{equation}
  Therefore by Taylor's formula,
  \begin{equation}
    \label{eq:53}
    \abs{\phi_k(s)-P_{k,x}(s)}\leq \delta \abs{J_k}^n\leq \delta e^{-t_k},
    \quad \forall s\in J_k,\ \forall k\geq k_1.
  \end{equation}
  Let $k\geq k_1$ and $w_k=a_{t_k}u(\phi_k(x))v$. Then due to
  \eqref{eq:52} and \eqref{eq:53}, for all $s\in J_k$,
  \begin{equation}
    \begin{split}
      &\norm{a_{t_k}u(\phi_k(s))v - a_{t_k}u(P_{k,x}(s))v}\\
      &\leq
      \norm{u(e^{t_k}(\phi_k(s)-\phi_k(x)))w_k-u(e^{t_k}(P_{k,x}(s)-\phi_k(x)))w_k}
      \leq \delta\norm{w_k},
    \end{split}
  \end{equation}
  note that
  $(e^{t_k}(\phi_k(s)-\phi_k(x)))-(e^{t_k}(P_{k,x}(s)-\phi_k(x)))=
  e^k(\phi_k(s)-P_{k,x}(s))$.
\end{proof}

\begin{notn}
  \label{notn:z(s)}
  Let $e_1=(1,0,\dots,0)\in\R^{n-1}$. Then by Notation~\ref{notn:phi}
  there exists a continuous map $z:I\to \cen(A)$ such that
  \begin{equation}
    z(s)\cdot \phi^{(1)}(s)=e_1,\ \forall s\in I.\quad \text{Let } 
    R_1:=\sup_{s\in I}\norm{z(s)}_V.
  \end{equation} 
\end{notn}

\begin{prop}
  \label{prop:CD}
  Let $R_0>0$ be as in Proposition~\ref{prop:main} for $C=1$. Let
  $\cA$ be a linear subspace of $V$ and $\cC$ be a compact subset of
  $\cA$. Then given $\epsilon>0$ there exists a compact set
  $\cD\subset\cA$ containing $\cC$ such that the following holds:
  Given any neighbourhood $\Phi$ of $\cD$ in $V$, there exist a
  neighbourhood $\Psi$ of $\cC$ and $k_2\in\N$ such that and for any
  $v\in V$ with
  \begin{equation}
    \label{eq:57}
    \norm{v}\geq R_0R_1(\sup_{w\in\Phi}\norm{w}),
  \end{equation} 
  a subinterval $J\subset I$, and any $k\geq k_2$ with
  $e^{-t_k}<\abs{J}^n$, we have
  \begin{equation}
    \label{eq:56}
    \begin{split}
      \abs{\{s\in J&:z(s)a_{t_k}u(\phi_k(s))v\in \Psi\}}\\
      &\leq\epsilon\abs{\{s\in J:z(s)a_{t_k}u(\phi_k(s))v\in\Phi\}}.
    \end{split}
  \end{equation}
\end{prop}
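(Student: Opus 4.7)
The plan is to reduce the statement to the classical $(C,\alpha)$-good property for polynomial curves of bounded degree acting on $V$ (à la Kleinbock--Margulis), by locally approximating $\phi_k$ with its Taylor polynomial $P_{k,x}$ via Corollary~\ref{cor:phi-poly}. Given $\epsilon>0$, I first choose $\cD$ as a sufficiently large compact subset of $\cA$ containing $\cC$; its thickness relative to $\cC$ inside $\cA$ is dictated by the Kleinbock--Margulis constants. Specifically, vector-valued polynomial maps $\mathbf{P}\colon[0,1]\to V$ of degree at most $N=N(n,\dim V)$ enjoy a $(C_0,\alpha)$-good property, where $N$ is chosen to bound the degree (in $s$) of $s\mapsto z(x)a_{t_k}u(P_{k,x}(s))v$. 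Fixing a reference scale $\sigma>0$ (to be absorbed later into $\sup_{w\in\Phi}\norm{w}$), I take $\cD$ to be the set of points in $\cA$ within distance $\sigma\epsilon^{-1/\alpha}$ of $\cC$, so that once a polynomial trajectory enters a $\sigma$-neighbourhood of $\cC$, the $(C_0,\alpha)$-good inequality forces it to spend at most an $\epsilon$-fraction of the time there compared to the time spent in any neighbourhood of $\cD$.

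Next, for any subinterval $J\subset I$ with $e^{-t_k}<\abs{J}^n$, I apply Corollary~\ref{cor:phi-poly} iteratively, with $c$ chosen small (depending on the eventual neighbourhoods), to partition $J$ (up to a boundary set of measure zero) into closed sub-intervals $[s_k^{(j)},s_k'^{(j)}]$ satisfying $e^{t_k}(s_k'^{(j)}-s_k^{(j)})^n<1$, together with points $x_j\in[s_k^{(j)},s_k'^{(j)}]$. On each piece, \eqref{eq:47} yields
\[
\norm{a_{t_k}u(\phi_k(s))v-a_{t_k}u(P_{k,x_j}(s))v}\leq c\norm{a_{t_k}u(\phi_k(x_j))v},
\]
while the endpoint lower bounds \eqref{eq:47b}--\eqref{eq:47c}, together with the hypothesis $\norm{v}\geq R_0R_1\sup_{w\in\Phi}\norm{w}$ and the compactness of the range of $z(\cdot)$, imply that at each interior endpoint $z(s)a_{t_k}u(\phi_k(s))v$ has norm exceeding $\sup_{w\in\Phi}\norm{w}$, hence lies outside $\Phi$ (after shrinking $c$ if necessary to absorb the approximation error).

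I then subdivide each $[s_k^{(j)},s_k'^{(j)}]$ further so that $z(\cdot)$ varies by an arbitrarily small amount across each sub-piece, which is possible by uniform continuity of $z$ on $I$ and the vanishing length of the pieces as $k\to\infty$. On each resulting sub-piece, $s\mapsto z(x_j)a_{t_k}u(P_{k,x_j}(s))v$ is a vector-valued polynomial of degree at most $N$ in $s$, so the $(C_0,\alpha)$-good property applies: since the polynomial lies outside a slight shrinking of $\Phi$ at an endpoint, the fraction of $s$ at which it lies in a suitably chosen neighbourhood $\Psi$ of $\cC$ (obtained by thickening $\cC$ by $\sigma$ inside $\cA$ and by a much smaller amount transversally) is at most $\epsilon$ times the fraction at which the true trajectory lies in $\Phi$. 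Summing over the sub-pieces of the partition of $J$ produces~\eqref{eq:56}; the choice of $k_2$ ensures that the polynomial approximation is valid and that the variation of $z(\cdot)$ on each sub-piece is negligible.

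The main obstacle is reconciling the $(C,\alpha)$-good machinery --- classically phrased for sublevel sets of norms --- with the arbitrary neighbourhoods $\Psi\subset\Phi$ of $\cC\subset\cD$ appearing in the statement. This requires modelling $\Psi$ and $\Phi$ as tubular neighbourhoods with separate control in directions along and transverse to $\cA$, and absorbing both the polynomial-approximation error from Corollary~\ref{cor:phi-poly} and the variation of $z(s)$ across a sub-piece into the gap between $\Psi$ and $\Phi$ without disturbing their nesting and without letting $\Psi$ depend on $v$ or on $k$.
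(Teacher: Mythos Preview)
Your overall strategy --- approximate $\phi_k$ locally by its Taylor polynomial $P_{k,x}$ via Corollary~\ref{cor:phi-poly}, freeze $z(\cdot)$, and then invoke a growth property of polynomial curves of bounded degree --- is exactly the one the paper uses. However, the ``main obstacle'' you flag at the end is a genuine gap, not just a technical nuisance. The $(C,\alpha)$-good property in the Kleinbock--Margulis form controls sublevel sets of a single scalar function; it does not directly give a comparison between visits to an arbitrary neighbourhood $\Psi$ of a compact set $\cC\subset\cA$ and visits to a neighbourhood $\Phi$ of a larger compact $\cD\subset\cA$. Your proposed fix (tubular neighbourhoods with separate control along and transverse to $\cA$) would essentially have to reproduce a nontrivial lemma: this is precisely \cite[Prop.~4.2]{Dani+Mar:limit}, which the paper invokes as a black box. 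That proposition asserts that for polynomial maps $\zeta:\R\to V$ of bounded degree and given $\epsilon>0$, there exists $\cD\supset\cC$ in $\cA$ such that for \emph{any} neighbourhood $\Phi_1$ of $\cD$ one can choose a neighbourhood $\Psi_1$ of $\cC$ with
\[
\abs{\{s\in J:\zeta(s)\in\Psi_1\}}\leq\epsilon\,\abs{\{s\in J:\zeta(s)\in\Phi_1\}}
\]
whenever $\zeta(J)\not\subset\Phi_1$. This is the input you are missing; once you have it, your ad hoc choice of $\cD$ and the tubular-neighbourhood bookkeeping become unnecessary.

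There is also a structural difference in how you localize. You propose to partition $J$ by iterating Corollary~\ref{cor:phi-poly}, and then to subdivide again so that $z(\cdot)$ is essentially constant. The second subdivision is superfluous --- the intervals $[s_k,s'_k]$ already have length at most $e^{-t_k/n}\to0$, so uniform continuity of $z$ handles this by choosing $k_2$ large --- and the first subdivision is awkward because Corollary~\ref{cor:phi-poly} produces intervals centred at a chosen $x$, not a canonical partition. The paper sidesteps both issues: instead of partitioning $J$, it takes the connected components $F_1$ of $F=\{s\in J:z(s)a_{t_k}u(\phi_k(s))v\in\Phi\}$ that meet $E$, picks a single $x\in F_1\cap E$, applies Corollary~\ref{cor:phi-poly} once at $x$ to obtain $J_k$, and runs the polynomial estimate on $J_k\cap F_1$ with $\zeta(s)=z(s_k)a_{t_k}u(P_{k,x}(s))v$. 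The hypothesis $e^{-t_k}<\abs{J}^n$ together with $e^{t_k}\abs{J_k}^n<1$ forces an interior endpoint of $J_k$ to lie in $J$, and the norm lower bound there (from \eqref{eq:47b}--\eqref{eq:47c} and \eqref{eq:57}) pushes $\zeta$ outside $\Phi_1$, triggering the Dani--Margulis inequality. Summing over the (automatically disjoint) components $F_1$ gives \eqref{eq:56} directly.
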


In a special case of the above proposition when $\cA=\{0\}$ and
$\cC=\{0\}$, we have $\cD=\{0\}$ and the neighbourhoods $\Phi$ and
$\Psi$ can be described in terms of radii of balls centered at $0$.

\begin{proof}
  There exists $n_1\in\N$ such that if $P:\R\to\R^{n-1}$ is a
  polynomial map of degree at most $n$ and $v\in V$ then $s\mapsto
  u(P(s))v$ is a polynomial map of degree at most $n_1$.

  As in \cite[Prop.~4.2]{Dani+Mar:limit}, there exists a compact set
  $\cD\subset \cA$ containing $\cC$ such that given an open
  neighbourhood $\Phi_1$ of $\cD$ in $V$ there exists an open
  neighbourhood $\Psi_1$ of $\cC$ in $V$ contained in $\Phi_1$ such
  that for any polynomial map $\zeta:\R\to V$ of degree at most $n_1$
  and any bounded interval $J\subset\R$, if $\zeta(J)\not\subset
  \Phi_1$ then
  \begin{equation}
    \label{eq:58}
    \abs{\{s\in J:\zeta(s)\in \Psi_1\}}\leq \epsilon\abs{\{s\in
      J:\zeta(s)\in \Phi_1\}}.
  \end{equation}

  Now given a bounded open neighbourhood $\Phi$ of $\cD$ in $V$, we
  choose an open neighbourhood $\Phi_1$ of $\cD$ in $\Phi$ such that
  $\cl{\Phi_1}\subset \Phi$. Then we obtain an open neighbourhood
  $\Psi_1$ of $\cC$ contained in $\Phi_1$ as above. Let $\Psi$ be an
  open neighbourhood of $\cC$ contained in $\Psi_1$ such that
  $\cl{\Psi}\subset\Psi_1$.

  Let $\delta>0$ be such that $2\delta$-tubular neighbourhoods of
  $\cl{\Phi_1}$ (respectively, $\cl{\Psi}$) is contained in $\Phi$
  (respectively, $\Psi_1$). Let $R=\sup_{w\in\Phi}\norm{w}$. For
  $c=\delta/(R_1R)>0$, let $k_1\in \N$ be as in the
  Corollary~\ref{cor:phi-poly}. Let $k_2\geq k_1$ be such that
  \begin{equation}
    \label{eq:60}
    \norm{z(r)-z(r')}_V\leq \delta/R,\quad \forall \abs{r'-r}\leq
    e^{-t_k},\ r,r'\in I,\ k\geq k_2.
  \end{equation}

  Let $J\subset I$ be an interval and $v\in V$ with $\norm{v}\geq
  R_0R_1R$.  Let $k\geq k_2$ be such that
  $e^{-t_{k}}<\abs{J}^n$. Define
  \begin{align}
    \label{eq:55}
    E&=\{s\in J:z(s)a_{t_k}u(\phi_k(s))v\in\Psi\}\\
    F&=\{s\in J:z(s)a_{t_k}u(\phi_k(s))v\in\Phi\}.
  \end{align}

  Suppose that $F_1$ be a connected component of $F$ intersecting
  $E$. Let $x\in F_1\cap E$. By Corollary~\ref{cor:phi-poly} there
  exists $J_k=[s_k,s'_k]\subset I$ containing $x$ such that
  \eqref{eq:47a} - \eqref{eq:47c} hold. Then by \eqref{eq:47b},
  \eqref{eq:47c}, definitions of $R_1$, $R$ and $\delta$, and
  \eqref{eq:60},
  \begin{align}
    \label{eq:62}
    F_1\cap E&\subset\{s\in J_k\cap F_1:
    z(s_k)a_{t_k}u(P_{k,x}(s))v\in \Psi_1\}
  \end{align}
  Since $e^{t_k}\abs{J_k}^n<1$ and $e^{t_k}\abs{J}^n>1$ and $x\in
  J\cap J_k$, we get $\{s_k,s_k'\}\cap
  J\smallsetminus\{a,b\}\neq\emptyset$.  Therefore due to \eqref{eq:47b}
  and \eqref{eq:47c}, $a_{t_k}u(\phi_k(J_k\cap F_1))v\not\subset
  \Phi$. Hence by \eqref{eq:47} and the choices of $c$ and $R_1$,
  \begin{equation}
    \label{eq:102}
    z(s_k)a_{t_k}u(P_{k,x}(J_k\cap F_1))v\not\subset\Phi_1. 
  \end{equation}
  Therefore, since $z\mapsto\zeta(s):=z(s_k)a_{t_k}u(P_{k,x}(s))v$ is
  a polynomial map of degree at most $n_1$, by \eqref{eq:58} applied
  to the interval $J_k\cap F_1$ in place of $J$ we deduce that
  \begin{equation}
    \label{eq:63}
    \abs{F_1\cap E}\leq \abs{\{s\in J_k\cap F_1: z(s_k)a_{t_k}u(P_{k,x}(s))v\in
      \Psi_1\}}\leq \epsilon\abs{F_1}.
  \end{equation}
  Since $F$ has at most countably many disjoint connected components
  intersecting $E$, like $F_1$ as above, from \eqref{eq:63} we
  conclude that $\abs{E}\leq\epsilon\abs{F}$.
\end{proof}

\subsection{Geometry of intersection with weakly stable subspace}

\begin{prop}
  \label{prop:Gp0}
  Let $H$ be a proper noncompact simple Lie subgroup of
  $G=\SO(n,1)$. Let $p_0\in \wedge^{\dim H}\la{H}\smallsetminus\{0\}$. Then
  $Gp_0$ is closed.
\end{prop}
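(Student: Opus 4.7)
The plan is to identify $\Stab_G(p_0)$ as a reductive subgroup of $G$ and then invoke the standard criterion for closed orbits of a reductive group on a rational representation.

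Since the line $\wedge^{\dim H}\la{H}\subset V$ is one-dimensional and generated by $p_0$, an element $g\in G$ preserves the line $\R p_0$ if and only if $\Ad_g(\la{H})=\la{H}$, that is, $g\in\nor(H)$. The normalizer acts on $\R p_0$ by the character $\chi(g)=\det(\Ad_g|_{\la{H}})$, and since $H$ is simple --- hence admits no nontrivial continuous characters --- we have $\chi|_H\equiv 1$. Thus $H\subset\Stab_G(p_0)=\ker\chi\subset\nor(H)$, and in particular $\Stab_G(p_0)$ has finite index in $\nor(H)$.

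Next, I would argue that $\nor(H)$ is a reductive Lie subgroup of $G$. This is a standard fact for normalizers of semisimple subgroups of reductive Lie groups: $H\cdot\cen(H)$ has finite index in $\nor(H)$ (the quotient injecting into the finite outer automorphism group of $\la{H}$), and $\cen(H)$ is reductive as the fixed-point subgroup of the completely reducible $\Ad(H)$-action on $\la{g}$. Consequently $\Stab_G(p_0)$, being of finite index in $\nor(H)$, is itself a reductive subgroup of $G$.

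Finally, I would invoke the Matsushima--Birkes closed-orbit criterion: if a real reductive algebraic group acts rationally on a real vector space $V$, then the orbit $Gv$ is closed whenever $\Stab_G(v)$ is reductive. Applied to $G=\SO(n,1)$ acting on $\wedge^{\dim H}\la{g}$ via $\wedge^{\dim H}\Ad$ and to $v=p_0$, this yields that $Gp_0$ is closed. The main obstacle is the invocation of this criterion in the real-algebraic setting (as opposed to the complex one); an alternative and more elementary route would be to choose a Cartan involution $\theta$ of $G$ with $\theta(H)=H$ (so that $\theta$ preserves the line $\R p_0$) and then exhibit $p_0$, up to sign, as a norm-minimizing point in its $G$-orbit, in the spirit of Kempf--Ness.
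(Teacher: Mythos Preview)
Your main line of argument has a genuine gap: the claimed criterion ``reductive stabilizer implies closed orbit'' is false in general. A standard counterexample is $\SL_2(\R)$ acting on real binary cubic forms $\mathrm{Sym}^3(\R^2)$: the vector $v=x^2y$ has trivial (hence reductive) stabilizer, yet $\diag(t,t^{-1})\cdot v = t\,x^2y \to 0$ as $t\to 0$, so the orbit is not closed. Matsushima's theorem gives only the converse implication (closed orbit $\Rightarrow$ reductive stabilizer, via affineness of $G/G_v$), and Birkes' results concern the comparison of real and complex orbit closures rather than the direction you invoke. So as written, the proof does not go through.

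The paper avoids this by using a stronger structural fact: after conjugating $H$ to $\SO(k,1)$, the stabilizer $\norone(\SO(k,1))$ is a \emph{symmetric} subgroup of $G$ (the fixed-point set of an involution), and a specific closed-orbit result for symmetric subgroups from \cite{GOS:Satake} is then cited. Your alternative Kempf--Ness route is in fact the correct idea and is essentially what underlies that citation, but it needs to be carried out rather than mentioned as an afterthought. Concretely: choose a Cartan involution $\theta$ with $\theta(\la{H})=\la{H}$ and a $K$-invariant inner product on $\la{g}$ for which $\la{p}$ acts by symmetric operators; one must then verify that $p_0$ is a minimal vector in the sense of Richardson--Slodowy, i.e.\ $\langle X\cdot p_0,p_0\rangle=0$ for all $X\in\la{p}$. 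For an orthonormal basis $\{e_i\}$ of $\la{H}$ this equals $\sum_i\langle[X,e_i],e_i\rangle$; for $X\in\la{H}\cap\la{p}$ this is $\tr(\mathrm{ad}\,X|_{\la{H}})=0$ by semisimplicity of $\la{H}$, and for $X\in\la{H}^\perp\cap\la{p}$ it vanishes because $\theta$-stability of $\la{H}$ forces $[\la{H},\la{H}^\perp]\subset\la{H}^\perp$. With this check in place your alternative argument is complete and close in spirit to the paper's, but the ``Matsushima--Birkes'' shortcut you lead with is not valid.
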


\begin{proof}
  There may be a simple direct proof of this statement. Here we will
  quote from some earlier results.

  Note that $\Stab(p_0)=\norone(H)=M_1H$, where
  \[
  \norone(H)=\{g\in \nor(H):\det(\Ad g|_{\Lie(H)})=1\}
  \]
  and $M_1$ is the compact centralizer of $H$ in $G$. There exists
  $g\in G$ such that $gHg\inv=\SO(k,1)$ and $(gM_1g\inv)^0=\SO(n-k)$
  for some $2\leq k\leq n-1$. Now $\norone(\SO(k,1))$ is a symmetric
  subgroup of $G$ (see~\cite[pp.284--285]{EMS:counting}), and
  $\norone(\SO(k,1))$ stabilizes $gp_0$. Therefore by
  \cite[Corollary~4.7]{GOS:Satake}, the orbit $Gp_0=G(gp_0)$ is
  closed.
\end{proof}

In view of \eqref{eq:11}, we define
\begin{equation}
  \label{eq:101}
  V^-=\sum_{\mu<0} V_\mu,\quad V^0=V_0, \quad V^+=\sum_{\mu>0} V_\mu.  
\end{equation}
Then $V=V^-\oplus V^0\oplus V^+$.

\begin{prop}
  \label{prop:S}
  Let $p_0\in V$ be as in Proposition~\ref{prop:Gp0}. Let $g\in
  G$. Define
  \begin{equation}
    \label{eq:41}
    S=S_g=\{x\in\R^{n-1}: u(x)gp_0\in V^-+V^0\}.
  \end{equation}
  If $S\neq \emptyset$, then either $S$ is subsphere of a sphere in
  $\R^{n-1}$ or a proper affine subspace of $\R^{n-1}$.
\end{prop}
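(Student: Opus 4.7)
The plan is to identify $gp_0$ with a $(k-1)$-dimensional subsphere $\Sigma_g := g\Sigma_0 \subset \Sn^{n-1}$, where $\Sigma_0 := H\cdot 0 = \partial\HH^k$ (here $H = \SO(k,1)$ with $2 \leq k \leq n-1$, and $0 \in \R^{n-1}\subset \Sn^{n-1}$ denotes the $a_t$-repelling fixed point). Closedness of the orbit $Gp_0$ (Proposition~\ref{prop:Gp0}) together with $\Stab_G(p_0) = M_1H$ yields a $G$-equivariant bijection between $Gp_0$ (up to scalars) and the space of $(k-1)$-subspheres of $\Sn^{n-1}$.

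Since $U = \{u(x) : x \in \R^{n-1}\}$ is abelian, if $S\neq\emptyset$ and $x_0 \in S$, then $v := u(x_0)gp_0 \in V^-+V^0$ and $S - x_0 = \{y : u(y)v \in V^-+V^0\}$. The class of proper affine subspaces and subspheres of $\R^{n-1}$ is translation-invariant, so I may assume from the outset that $v = gp_0 \in V^-+V^0$.

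The central ingredient is the following key lemma: \emph{for any $h \in G$, $hp_0 \in V^-+V^0$ iff $0 \in h\Sigma_0$}. The ``if'' direction is routine: if $0 \in h\Sigma_0$, then $h^{-1}\cdot 0 \in \Sigma_0 = H\cdot 0$, so there is $h_0 \in H \subset \Stab_G(p_0)$ with $hh_0\cdot 0 = 0$, i.e., $hh_0 \in P^- = \Stab_G(0)$. Then $hp_0 = hh_0 p_0 \in P^-\cdot p_0 \subset V^-+V^0$, using that $\Lie(P^-)$ consists of nonpositive $A$-weight elements (so $P^-$ preserves $V^-+V^0$) and $p_0 \in V_0$. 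The ``only if'' direction --- the main obstacle --- asserts that if $0 \notin h\Sigma_0$, then $\|a_thp_0\|$ is unbounded. Geometrically this is clear: since $a_t:y\mapsto e^t y$ on $\R^{n-1}$ drags any subsphere not passing through $0$ off to $\infty$ in Hausdorff distance on $\Sn^{n-1}$, the representing vector in $V$ cannot stay in the $a_t$-bounded subspace $V^-+V^0$. I would make this precise by analyzing the $M_1H$-orbits on $\Sn^{n-1}$, i.e., the double cosets $P^-\backslash G/M_1H$: the trivial double coset $P^-\cdot M_1H$ coincides with $\{h : 0 \in h\Sigma_0\}$, so it suffices to check, by a direct weight-by-weight computation of $q_\mu(hp_0)$ on double coset representatives, that some $\mu > 0$ component of $hp_0$ is nonzero whenever $h\notin P^-\cdot M_1H$.

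Granting the key lemma applied to $h = u(x)g$: since $u(x)$ acts on $\Sn^{n-1}$ as the translation $y \mapsto y+x$ in the affine chart $\R^{n-1}$ (fixing $\infty$), we have $u(x)g\Sigma_0 = g\Sigma_0 + x$, and therefore
\begin{equation*}
S \;=\; \{x \in \R^{n-1} : 0 \in g\Sigma_0 + x\} \;=\; -\bigl(g\Sigma_0 \cap \R^{n-1}\bigr).
\end{equation*}
Now $g\Sigma_0$ is a $(k-1)$-dimensional subsphere of $\Sn^{n-1}$ with $k-1 \leq n-2$, so its intersection with the affine chart $\R^{n-1}$ is either a proper $(k-1)$-dimensional affine subspace (when $\infty \in g\Sigma_0$) or a $(k-1)$-sphere in $\R^{n-1}$ (when $\infty \notin g\Sigma_0$); the reflection $y\mapsto -y$ preserves both classes, giving the conclusion.
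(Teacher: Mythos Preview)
Your approach and the paper's are essentially the same: both identify $S_g$ with the inverse stereographic image of a proper subsphere, and both rest on the characterization you isolate as the key lemma, namely $hp_0\in V^-+V^0$ iff $h\in P^-F$ (equivalently, $0\in h\Sigma_0$), once one arranges $A\subset H$ so that $p_0\in V^0$. Your ``if'' direction and the final deduction $S=-(g\Sigma_0\cap\R^{n-1})$ are correct.

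The difference lies in how the ``only if'' direction is justified. Your Hausdorff-degeneration heuristic is morally right, but as stated it presupposes that a sequence in $Gp_0$ whose associated subspheres collapse to a point must leave every compact set of $V$; making this rigorous already requires the properness of $G/F\to Gp_0$ together with a continuity comparison you do not spell out, and your alternative ``direct weight-by-weight computation on double coset representatives'' is promised but not carried out. The paper handles this step by a short structural argument instead: since the Weyl element $w$ lies in $H\subset F$, the Bruhat decomposition gives $hF=b\exp(X)F$ with $b\in P^-$ and $X\in\la{U}^+$; then $hp_0\in V^-+V^0$ forces $\exp(\Ad a_t(X))p_0$ to remain bounded as $t\to\infty$, and properness of the unipotent orbit map $U^+/(U^+\cap F)\to V$ (orbits of unipotent groups on affine varieties are closed) yields $\exp(X)\in F$, hence $h\in P^-F$. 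This replaces your proposed case analysis with a single clean step and is worth internalizing.
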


\begin{proof}  
  Since the orbit $Gp_0$ is closed, for every $x\in S$ there exists
  $\xi(x)\in G$ such that
  \begin{equation}
    \label{eq:42}
    a_tu(x)gp_0 \xrightarrow{t\to\infty} \xi(x)p_0
  \end{equation}
  and $\xi(x)p_0\in V^0$ is fixed by $A$. Let
  $F=\norone(H)=\Stab(p_0)$. Then the map $gF\mapsto gp_0$ from $G/F$
  to $V$ is a homeomorphism onto $Gp_0$. Therefore
  \begin{equation}
    \label{eq:44}
    a_tu(x)gF\xrightarrow{t\to\infty} \xi(x)F,
  \end{equation}
  and $A\subset \xi(x) F\xi(x)\inv$.

  Choose any $x_0\in S$. Let $p_1=\xi(x_0)p_0$,
  $H_1=\xi(x_0)H\xi(x_0)\inv$, and $F_1=\Stab(p_1)=\xi(x_0)
  F\xi(x_0)\inv$.  Then $A\subset F_1=\norone(H_1)$. Hence $A\subset
  H_1$.  As $\R$-rank of $G$ is one, there exists a Weyl group
  `element' $w\in H_1\subset F_1$ such that $w=w\inv$ and $waw\inv
  =a\inv$ for all $a\in A$. Now $G$ admits a Bruhat decomposition
  \begin{equation}
    \label{eq:46}
    G=P^- w U^- \cup P^-= P^-U^+\cup P^-w
  \end{equation}
  (see \cite[\S12.14]{Rag:book}), where
  \begin{equation}
    \label{eq:84}
    \begin{split}
      U^+&=\{h\in G: a_t\inv ha_t\xrightarrow{t\to\infty} e\} =
      \{u(\vx):\vx\in\R^{n-1}\}\\
      U^-&=\{h\in G: a_t ha_t\inv\xrightarrow{t\to\infty} e\} =
      \{\trn{u(\vx)}:\vx\in\R^{n-1}\},\\
      P^-&=\{h\in G: \cl{\{a_t ha_t\inv:t>0\}}\text{ is compact}\} =
      U^-\cen(A)=U^-AM.
    \end{split}
  \end{equation}

  Let $x\in S$. Put $g_1=g\xi(x_0)\inv$, and
  $\xi_1(x)=\xi(x)\xi(x_0)\inv$. Then \eqref{eq:44} is equivalent to
  \begin{equation}
    \label{eq:1}
    a_tu(x)g_1F_1\xrightarrow{t\to\infty} \xi_1(x)F_1.
  \end{equation}
  Since $w\in F_1$, by \eqref{eq:46} there exist $b\in P^-$ and $X\in
  \la{U}^+$ such that
  \begin{equation}
    \label{eq:92}
    u(x)g_1F_1=b\exp(X)F_1.
  \end{equation}
  Now
  \begin{equation}
    \label{eq:48}
    a_tu(x)g_1p_1=a_t(b\exp(X))p_1=(a_tba_t\inv)\exp(\Ad a_t(X))p_1. 
  \end{equation}
  Since $a_tba_t\inv\to b_0$ as $t\to\infty$ for some $b_0\in
  \cen(A)$, by \eqref{eq:42}
  \begin{equation}
    \label{eq:49}
    \exp(e^tX)p_1=\exp(\Ad a_t (X))p_1\xrightarrow{t\to\infty} b_0\inv\xi_1(x)p_1.
  \end{equation}
  Since $U^+$ is a unipotent group, the orbit $U^+p_1$ is a closed
  affine variety, and hence the map $h(U^+\cap F_1)\mapsto hp_1$ from
  $U^+/(U^+\cap F_1)\to \bar V$ is proper. Therefore from
  \eqref{eq:49} we conclude that $\exp(X)\in F_1$. Hence by
  \eqref{eq:92}, $u(x)g_1F_1=bF_1$. Therefore
  \begin{equation}
    \label{eq:50}
    S_g=\{x\in\R^{n-1}: u(x)\in P^-F_1g_1\inv, \ A\subset F_1\};
  \end{equation}
  we have proved the inclusion ``$\subset$'', and the converse holds
  because $p_1\in V^0$ and $P^-p_1\subset V^0+V^-$.
  
  Let $\cI:G\to P^-\backslash G\cong\Sn^{n-1}$ be the map as defined
  in the introduction. The right action of any $g\in G$ on
  $P^-\backslash G$ corresponds to a conformation transformation on
  $\Sn^{n-1}$.  Let $\cS:\R^{n-1}\to \Sn^{n-1}$ be the map defined by
  $\cS(x)=\cI(u(x))$ for all $x\in \R^{n-1}$. Then $\cS$ is the
  inverse stereographic projection. Since $A\subset F_1$, $P^-\cap
  F_1$ is a proper parabolic subgroup of $F_1$. In fact,
  $\cI(F_1)=\cI(\SO(k,1))$ for some $2\leq k\leq n-1$. Hence
  \[
  \cI(F_1)\cong (P^-\cap F_1) \backslash F_1\cong \Sn^{k-1}
  \]
  is a proper subsphere of $\Sn^{n-1}$. Therefore by \eqref{eq:50},
  $S_g$ is the inverse image of a proper subsphere of $\Sn^{n-1}$
  under the stereographic projection.
\end{proof}

\begin{rem}
  \label{rem:sS1}
  In the Proposition~\ref{prop:S}, suppose that
  $\pi(\norone(H))=\pi(F)$ is closed in $G/\Gamma$. Put
  $g=\gamma\in\Gamma$. If $x_0\in S_\gamma$, then
  $g_1=\gamma\xi(x_0)\inv$ and hence $F_1g_1\inv\Gamma =
  \xi(x_0)F\Gamma$ is closed. By \eqref{eq:88} and \eqref{eq:50},
  $S_\gamma=\cS\inv(S)$, where $S=\cI(F_1g_1\inv)\in\sS$.
\end{rem}

\section{Limiting measure and invariance under unipotent flow}
Let $\phi_k\to\phi$ be a convergent sequence in $C^n(I,\R^{n-1})$ as
in Notation~\ref{notn:phi}. Let $z:I\to \cen(A)$ be the continuous
function as in Notation~\ref{notn:z(s)} such that $z\cdot
\phi^{(1)}(s)=e_1$ for all $s\in I$. Let $g_k\to g_0$ be a convergent
sequence in $G$. Then $x_k=g_k\Gamma\to x_0=g_0\Gamma$ in $G/\Gamma$.

\begin{prop}
  \label{prop:nondiv}
  Given $\epsilon>0$ there exists a compact set $\cK\subset G/\Gamma$
  such that for any sequence $t_k\to\infty$,
  \begin{equation}
    \label{eq:67}
    \frac{1}{\abs{I}}\abs{\{s\in I:z(s)a_{t_k}u(\phi_k(s))x_i\in\cK\}}
    \geq 1-\epsilon,
    \quad \text{for all large $k\in\N$.}
  \end{equation}
\end{prop}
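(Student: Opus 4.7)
The strategy is to adapt the Dani--Margulis linearization approach to non-divergence of unipotent flows, using Proposition~\ref{prop:CD} (in the special case $\cA = \cC = \{0\}$) as a substitute for the classical $(C,\alpha)$-good property of polynomials. By the Mahler-type compactness criterion for $G/\Gamma$, there is a countable collection $\cP_\Gamma \subset V \smallsetminus \{0\}$ of $\Gamma$-translates of special vectors $p_H \in \wedge^{\dim H}\la{h}$---one for each proper connected $\Gamma$-rational subgroup $H$ of the type treated in Proposition~\ref{prop:Gp0}---such that the sets
\[
\cK_\alpha := \{g\Gamma \in G/\Gamma : \norm{gv} \geq \alpha \text{ for all } v \in \cP_\Gamma\}
\]
are compact and exhaust the compact subsets of $G/\Gamma$ as $\alpha \to 0^+$. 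It therefore suffices to find $\alpha > 0$ so that
\[
\bigl|\bigl\{s \in I : \norm{z(s)a_{t_k}u(\phi_k(s))g_k v} < \alpha \text{ for some } v \in \cP_\Gamma\bigr\}\bigr| < \epsilon \abs{I},
\]
for all sufficiently large $k$.

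To that end, cover $I$ up to a null set by essentially disjoint subintervals $J$ supplied by Corollary~\ref{cor:phi-poly} with $e^{-t_k} < \abs{J}^n$, on which the polynomial approximation of the trajectory is effective. Fix a threshold $L > 0$. For each $v \in \cP_\Gamma$ with $\norm{g_k v} \geq L$, apply Proposition~\ref{prop:CD} taking $\Phi$ to be the open ball in $V$ of radius $L/(R_0 R_1)$ centered at $0$ and $\Psi$ the ball of radius $\alpha$. The proposition yields
\[
\bigl|\bigl\{s \in J : \norm{z(s) a_{t_k}u(\phi_k(s))g_k v} < \alpha\bigr\}\bigr| \leq \epsilon_1 \abs{J},
\]
where $\epsilon_1 \to 0$ as $\alpha/L \to 0$. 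Summing over the cover and over the (finitely many per $J$, by discreteness) relevant large vectors gives a total contribution at most $C\epsilon_1 \abs{I}$.

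For vectors $v$ with $\norm{g_k v} < L$, the convergence $g_k \to g_0$ combined with discreteness of $\Gamma \cdot p_H$ in $V$---which in turn rests on the closedness of the orbit $Gp_H$ supplied by Proposition~\ref{prop:Gp0}---implies that only finitely many such $v$ arise, uniformly in $k$. These are handled by the usual Dani--Margulis iteration: whenever the sup-norm of $z(s) a_{t_k}u(\phi_k(s))g_k v$ over a subinterval stays below $L$, one replaces $v$ by a suitable wedge in a higher-degree component of $V$, associated to a $\Gamma$-rational subgroup containing $H$, whose sup-norm must then be correspondingly larger; one then reapplies Proposition~\ref{prop:CD}. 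Since $V$ is finite dimensional the iteration terminates.

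The main technical obstacle is this final iteration for persistently small vectors, which requires careful bookkeeping of $\Gamma$-rational parabolic substructures and parallels the arguments of~\cite{Dani+Mar:limit, Shah:uniform}. The new ingredient is the replacement of the $(C,\alpha)$-good property of polynomials by Proposition~\ref{prop:CD}, which through the polynomial approximation of Corollary~\ref{cor:phi-poly} supplies the required measure estimate for $C^n$ smooth curves.
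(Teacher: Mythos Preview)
Your core strategy---replacing the $(C,\alpha)$-good property by Proposition~\ref{prop:CD} with $\cA=\cC=\{0\}$ and then invoking a Dani-type non-divergence criterion---matches the paper's. However, the paper's execution is substantially simpler because it exploits that $G=\SO(n,1)$ has real rank one.

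In rank one, Dani's criterion~\cite{Dani:rk=1} involves only a \emph{single} maximal unipotent subgroup $N$ with $N/(N\cap\Gamma)$ compact, and the single vector $p_N\in\wedge^{\dim\la{n}}\la{n}$. Since $\Gamma p_N$ is discrete and $g_k\to g_0$, one has
\[
r_3:=\inf_{\gamma\in\Gamma,\,k\in\N}\norm{g_k\gamma p_N}>0.
\]
Now simply choose the radius $R$ of $\Phi$ so that $R_0R_1R\leq r_3$; then \emph{every} relevant vector $v=g_k\gamma p_N$ satisfies the hypothesis~\eqref{eq:57} of Proposition~\ref{prop:CD}, and the required estimate~\eqref{eq:66} follows for all $\gamma$ at once. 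Dani's criterion then produces the compact set $\cK$. There is no need to split into large and small vectors, and no iteration.

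Your proposed iteration for ``persistently small'' vectors---passing to wedges in higher-degree components associated to larger $\Gamma$-rational subgroups---is the mechanism one uses for $\SL_n$ or other higher-rank groups, but it has no natural analogue in rank one: there is no nontrivial flag of $\Gamma$-rational parabolic subgroups to climb. Fortunately it is also unnecessary here, since the discreteness you yourself invoke already furnishes a uniform positive lower bound, eliminating the small-vector case entirely once $R$ is chosen appropriately.
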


\begin{proof}
  Let $N$ be a maximal unipotent subgroup of $G$ such that $N/(N\cap
  \Gamma)$ is compact. Let $\la{N}$ denote the Lie algebra of $N$. Fix
  $\pN\in V\smallsetminus\{0\}$ such that $\pN\in
  \wedge^{\dim{\la{n}}}\la{n}$. Then $\Gamma\pN$ is discrete (see
  \cite{Dani:rk=1}). Let
  \begin{equation}
    \label{eq:65}
    0<r_3=\inf_{\substack{\gamma\in\Gamma}{k\in\N}}\norm{g_k\gamma\pN}.
  \end{equation}
  By Proposition~\ref{prop:CD} applied to $\cA=\{0\}$, given $0<R\leq
  r_3/R_0R_1$ there exists $r>0$ such that the following holds: Given
  any sequence $t_k\to\infty$ there exists $k_2\in\N$ such that for
  any interval $J\subset I$, $k\geq k_2$ with $e^{-t_k}\geq \abs{J}^n$
  and $\gamma\in\Gamma$,
  \begin{equation}
    \label{eq:66}
    \begin{split}
      &\abs{\{s\in J:\norm{z(s)a_{t_k}u(\phi_k(s))g_k\gamma\pN}< r\}}\\
      &\leq \epsilon\cdot \abs{\{s\in
        J:\norm{z(s)a_{t_k}u(\phi(s))g_k\gamma\pN}<R\}}.
    \end{split}
  \end{equation}

  By the proof of Dani's non-divergence criterion\cite{Dani:rk=1} for
  homogeneous spaces of rank one semisimple groups, the conclusion in
  the previous paragraph implies the existence of a compact set $\cK$
  such that \eqref{eq:67} holds; the choice of $\cK$ depends only on
  $r>0$ chosen above, not on the sequence $\{t_k\}$.
\end{proof}

Take a sequence $t_k\to\infty$ in $\R$.  Let $\lambda_k$ be the
probability measure on $G/\Gamma$ defined by
\begin{equation}
  \label{eq:mui} 
  \int_{\gmg} f\,{\dd}\lambda_k=\frac{1}{\abs{I}}\int_I
  f(z(s)a_{t_k}u(\phi_i(s))x_k)\,ds,\quad\forall f\in\Cc(\gmg).
\end{equation}

Then Proposition~\ref{prop:nondiv} implies the following:
  
\begin{theo}
  \label{thm:return}
  After passing to a subsequence
  $\lambda_k\stackrel{k\to\infty}{\to}\lambda$ in the space of
  probability measures on $G/\Gamma$ with respect to the weak-$\ast$
  topology.
\end{theo}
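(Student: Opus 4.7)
The plan is to derive Theorem \ref{thm:return} as a standard consequence of tightness, with Proposition~\ref{prop:nondiv} providing precisely the tightness needed to rule out escape of mass in the noncompact space $G/\Gamma$.

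First, I would appeal to Banach--Alaoglu applied to the dual of the separable Banach space $\Cc(G/\Gamma)$ (or rather, $C_0(G/\Gamma)$). Since each $\lambda_k$ is a Radon probability measure, the sequence $\{\lambda_k\}$ has total variation bounded by $1$, so it sits in a weak-$\ast$ compact subset of the space of finite signed Borel measures. Since $G/\Gamma$ is $\sigma$-compact, this weak-$\ast$ topology is metrizable on norm-bounded sets, hence sequentially compact. Passing to a subsequence (which I still index by $k$), we obtain a weak-$\ast$ limit $\lambda$ that is a nonnegative finite Borel measure with $\lambda(G/\Gamma)\leq 1$.

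Next I would rule out loss of mass using Proposition~\ref{prop:nondiv}. Fix $\epsilon>0$ and let $\cK=\cK(\epsilon)\subset G/\Gamma$ be the compact set provided by that proposition, so that $\lambda_k(\cK)\geq 1-\epsilon$ for all sufficiently large $k$. Choose $f_{\cK}\in\Cc(G/\Gamma)$ with $0\leq f_{\cK}\leq 1$ and $f_{\cK}\equiv 1$ on $\cK$. Then by weak-$\ast$ convergence,
\begin{equation*}
\lambda(G/\Gamma)\geq \int_{G/\Gamma} f_{\cK}\,d\lambda
=\lim_{k\to\infty}\int_{G/\Gamma} f_{\cK}\,d\lambda_k
\geq \liminf_{k\to\infty}\lambda_k(\cK)\geq 1-\epsilon.
\end{equation*}
Letting $\epsilon\to 0$ yields $\lambda(G/\Gamma)=1$, so $\lambda$ is a probability measure.

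There is really no substantive obstacle here: the only potential issue is the failure of a weak-$\ast$ limit of probabilities on the noncompact space $G/\Gamma$ to remain a probability, and that has already been handled by the non-divergence estimate of Proposition~\ref{prop:nondiv}, which in turn rests on the $(C,\alpha)$-type growth estimate of Proposition~\ref{prop:CD} applied to vectors of the form $g_k\gamma \pN$. Thus Theorem~\ref{thm:return} is essentially a reformulation of tightness plus sequential Banach--Alaoglu.
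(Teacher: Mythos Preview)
Your argument is correct and is exactly the standard tightness-plus-compactness deduction the paper has in mind: the paper does not even write a separate proof, stating only that Proposition~\ref{prop:nondiv} implies Theorem~\ref{thm:return}. One small remark: your closing sentence calling Proposition~\ref{prop:CD} a ``$(C,\alpha)$-type growth estimate'' misdescribes the mechanism (the paper's point is precisely to replace $(C,\alpha)$-growth by polynomial approximation via Corollary~\ref{cor:phi-poly}), but this is commentary, not part of the proof.
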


\begin{theo}
  \label{thm:invariant}
  The limit measure $\lambda$ is invariant under the action of $W$.
\end{theo}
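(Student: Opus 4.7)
The plan is to show that $\lambda$ is invariant under the one-parameter unipotent subgroup $W=\{u(re_1):r\in\R\}$ by proving that for every $r\in\R$ and every $f\in\Cc(G/\Gamma)$,
\[
\int_{G/\Gamma} f\,d(u(re_1)_\ast\lambda_k) - \int_{G/\Gamma} f\,d\lambda_k \xrightarrow{k\to\infty} 0,
\]
and then passing to the weak-$\ast$ limit along the subsequence from Theorem~\ref{thm:return}. The guiding principle is that, thanks to the normalization $z(s)\cdot\phi^{(1)}(s)=e_1$, left multiplication by $u(re_1)$ is equivalent, up to an error that vanishes in the limit, to a shift of the parameter $s$ by $\delta_k:=e^{-t_k}r$.

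First I would carry out the algebraic commutation. Using $z(s)\in\cen(A)$, the identity $a_t u(\vx) a_t^{-1}=u(e^t\vx)$, abelian-ness of $\{u(\vx):\vx\in\R^{n-1}\}$, and the defining property of $z(s)$, one obtains
\[
u(re_1)\,z(s)\,a_{t_k}\,u(\phi_k(s))\,x_k \;=\; z(s)\,a_{t_k}\,u\bigl(\phi_k(s)+e^{-t_k}r\,\phi^{(1)}(s)\bigr)\,x_k.
\]
Setting $\delta_k=e^{-t_k}r$ and applying Taylor's formula to $\phi_k$, combined with the uniform convergence $\phi_k^{(1)}\to\phi^{(1)}$ coming from $C^n$-convergence ($n\geq 2$), the displacement
\[
\eta_k(s):=\phi_k(s)+e^{-t_k}r\,\phi^{(1)}(s)-\phi_k(s+\delta_k)
\]
satisfies $\eta_k(s)=o(e^{-t_k})$ uniformly in $s\in I$. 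Using abelian-ness of $U^+$, the right-hand side factors as $z(s)\,a_{t_k}\,u(\phi_k(s+\delta_k))\,u(\eta_k(s))\,x_k$, and pushing $u(\eta_k(s))$ to the left across $a_{t_k}$ and $z(s)$ produces a conjugated perturbation of the form $z(s)u(e^{t_k}\eta_k(s))z(s)^{-1}$ which tends to $e$ uniformly in $s$ (because $z$ is bounded on $I$). Uniform continuity of $f$ lets me absorb this error.

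Finally, I would replace $z(s)$ by $z(s+\delta_k)$ inside the integrand — the cost is another uniform correction controlled by continuity of $z$, $z\in\cen(A)$, and uniform continuity of $f$ — and change variables $s'=s+\delta_k$. The shifted integral $\int_{I+\delta_k}f(z(s')a_{t_k}u(\phi_k(s'))x_k)\,ds'$ differs from $|I|\int f\,d\lambda_k$ by a boundary contribution of size at most $2\|f\|_\infty|\delta_k|=O(e^{-t_k})$, which vanishes as $k\to\infty$. Combining the three estimates yields the displayed asymptotic and hence $u(re_1)_\ast\lambda=\lambda$ for every $r\in\R$. The main obstacle is purely quantitative: one must ensure that $\eta_k(s)$ decays strictly faster than $e^{-t_k}$, so that the exponential expansion built into $a_{t_k}$ is defeated. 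This is exactly where $C^n$-convergence ($n\geq 2$) enters, providing both the uniform convergence $\phi_k^{(1)}\to\phi^{(1)}$ and the equi-bounded control on second-order Taylor remainders.
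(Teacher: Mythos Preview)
Your proposal is correct and follows essentially the same approach as the paper. The paper itself does not spell out the argument here, referring instead to \cite[Theorem~3.1]{Shah:son1}, but the computation you outline is exactly the one the paper later writes out explicitly in \eqref{eq:71}--\eqref{eq:103} for the shrinking-interval variant (where the $z(s)$ factor is absent and $W=\{u(r\phi^{(1)}(x)):r\in\R\}$ for a fixed density point $x$); your version correctly adapts that calculation to the fixed-interval setting by using $z(s)\cdot\phi^{(1)}(s)=e_1$ to reduce to $W=\{u(re_1):r\in\R\}$, and your handling of the three error terms (Taylor remainder plus $\phi_k^{(1)}\to\phi^{(1)}$, the $z(s)\to z(s+\delta_k)$ replacement, and the boundary contribution from the variable shift) is sound.
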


\begin{proof}
  The proof follows from the same argument as in the Proof of
  \cite[Theorem~3.1]{Shah:son1}.
\end{proof}

The next result says that the limit measure is null on the parabolic
cylinders embedded in the cusps.

\begin{prop}
  \label{prop:cusps}
  Let $U$ be any maximal unipotent subgroup of $G$ containing $W$ and
  $x\in G$ such that $Ux$ is compact. Then $\lambda(\nor(U)x)=0$.
\end{prop}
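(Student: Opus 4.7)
The plan is to detect the closed orbit $\nor(U)x$ via a rational vector $p_U$ in the representation $V$, localize to a compact subset of $G/\Gamma$ using Proposition~\ref{prop:nondiv}, and then apply the $(C,\alpha)$-type estimate of Proposition~\ref{prop:CD} to show that small tubular neighborhoods of $\nor(U)x$ carry vanishingly small $\lambda_k$-mass.

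Concretely, write $x=g_0\Gamma$ and fix $0\neq p_U\in\wedge^{\dim\la{U}}\la{U}\subset V$. Then $\Stab_G(\R p_U)=\nor(U)$, and the compactness of $Ux$ implies that $g_0\inv\Gamma g_0\cap U$ is a cocompact lattice in $U$, so the orbit $\Gamma g_0\inv p_U$ is discrete in $V$. One has the algebraic description
\[
h\Gamma\in\nor(U)x \iff h\cdot(\gamma\inv g_0\inv p_U)\in\R p_U\ \text{for some }\gamma\in\Gamma.
\]
Given $\epsilon>0$, choose by Proposition~\ref{prop:nondiv} a compact $\cK\subset G/\Gamma$ with $\lambda_k(\cK)\geq 1-\epsilon/2$ for large $k$. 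The intersection $\nor(U)x\cap\cK$ is compact, and by discreteness of $\Gamma g_0\inv p_U$ only finitely many $\gamma_1,\dots,\gamma_N\in\Gamma$ can make $h\cdot(\gamma_i\inv g_0\inv p_U)\in\R p_U$ with $h\Gamma\in\cK$; moreover the resulting values $h\cdot(\gamma_i\inv g_0\inv p_U)$ lie in a compact subset $\cC\subset\R p_U$ (because compactness of $\cK$ truncates how far in the $A$-direction of the cusp such a point can travel). A sufficiently small open neighborhood $\Omega$ of $\nor(U)x$ then translates the condition ``$h\Gamma\in\Omega\cap\cK$'' into ``$h v_{i,k}\in\Psi$ for some $i$'', where $v_{i,k}:=g_k\gamma_i\inv g_0\inv p_U\to g_0\gamma_i\inv g_0\inv p_U\neq 0$ and $\Psi$ is a small neighborhood of $\cC$ in $V$.

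Apply Proposition~\ref{prop:CD} with $\cA=\R p_U$ and this compact $\cC$: for any $\epsilon'>0$, it supplies a compact $\cD\supset\cC$ in $\cA$ and matching neighborhoods $\Psi\subset\Phi$ such that, for $k$ large,
\[
\abs{\{s\in I:z(s)a_{t_k}u(\phi_k(s))v_{i,k}\in\Psi\}}\leq\epsilon'\abs{\{s\in I:z(s)a_{t_k}u(\phi_k(s))v_{i,k}\in\Phi\}}\leq\epsilon'\abs{I},
\]
provided $\norm{v_{i,k}}\geq R_0R_1\sup_\Phi\norm{\cdot}$. Summing over $i=1,\dots,N$ and choosing $\epsilon'=\epsilon/(2N)$ gives $\lambda_k(\Omega)\leq\epsilon$ for large $k$, whence $\lambda(\Omega)\leq\epsilon$. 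Shrinking $\Omega$ down to $\nor(U)x$ and then letting $\epsilon\to 0$ yields $\lambda(\nor(U)x)=0$.

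The main technical obstacle will be the norm hypothesis $\norm{v_{i,k}}\geq R_0R_1\sup_\Phi\norm{\cdot}$: since the extent of $\cC$ (and hence $\cD,\Phi$) is dictated by the cusp-depth probed by $\cK$, while the lower bound on $\norm{v_{i,k}}$ is an injectivity-radius statement for the discrete orbit $\Gamma g_0\inv p_U$, one must calibrate the cusp cutoff defining $\cK$ to the discrete orbit. This is the standard ping-pong between the non-divergence bound and the growth-property estimate, handled by applying Proposition~\ref{prop:nondiv} simultaneously to each vector $\gamma_i\inv g_0\inv p_U$ in the relevant part of the $\Gamma$-orbit and truncating $\cK$ accordingly before fixing $\cC$.
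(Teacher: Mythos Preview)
Your approach is genuinely different from the paper's and carries an unresolved obstacle that you yourself flag. The paper's argument is much shorter and avoids Proposition~\ref{prop:CD} altogether: since $G$ has rank one, $W$ lies in a unique maximal unipotent subgroup, so $U=U^+$ and $\nor(U)=\cen(A)U^+$. For a compact $C\subset\nor(U^+)$ the set $C_1=\cl{\{a_{-t}Ca_t:t>0\}}$ is compact. Now $Ux$ compact means some nontrivial $u\in U$ fixes $x$, hence $a_{-t}ua_t\to e$, which forces $a_{-t}x$ to leave every compact set for $t\geq T_0$. In particular $Cx\cap a_{T_0}\cK=\emptyset$ for the $\cK$ of Proposition~\ref{prop:nondiv}. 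Applying that proposition with the time--shifted sequence $t_k'=t_k-T_0$ gives $\lambda_k(a_{T_0}\cK)\geq 1-\epsilon$ for large $k$, so $\lambda(Cx)\leq\epsilon$. Exhausting $\nor(U)$ by compacta finishes. No representation vector, no $(C,\alpha)$--estimate, no finiteness of $\gamma_i$'s.

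Your route has two concrete problems. First, the set $\cC\subset\R p_U$ you form from $\nor(U)x\cap\cK$ need not be compact: the cylinder $\nor(U)x$ has one end going into the cusp (where the $A$--coordinate $r\to-\infty$ and $n p_U\to 0$), but in the other $A$--direction the orbit can return to $\cK$ for arbitrarily large $r$, producing $n p_U=\pm e^{(n-1)r}p_U$ of unbounded norm. So ``compactness of $\cK$ truncates the $A$--direction'' is only half true. Second, even granting a compact $\cC$, the norm hypothesis $\norm{v_{i,k}}\geq R_0R_1\sup_\Phi\norm{\cdot}$ of Proposition~\ref{prop:CD} is exactly the quantity you cannot control: $\sup_\Phi\norm{\cdot}\geq\sup_{\cD}\norm{\cdot}\geq\sup_\cC\norm{\cdot}$ is dictated by $\cK$ (hence by $\epsilon$), while $\inf_i\norm{v_{i,k}}$ is a fixed constant coming from discreteness of $\Gamma g_0\inv p_U$. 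Your suggestion to ``truncate $\cK$ accordingly'' works against you, since shrinking $\cK$ loses the $1-\epsilon$ mass bound from Proposition~\ref{prop:nondiv}. This is not the standard ping--pong one sees in the linearization method; there the vectors one tests against lie outside a fixed ball and the neighbourhoods $\Phi$ are chosen small, whereas here $p_U$ sits in the top weight space $V_{n-1}$ and $\cC$ must absorb the full expanding $A$--action. The paper sidesteps all of this by exploiting directly that $\nor(U)x$ is a cusp cylinder and can be pushed off any prescribed compact set by a fixed time shift of the flow.
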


\begin{proof}(cf.~\cite{R:uniform}) Since $G$ is a rank one group, $W$
  is contained in a unique maximal unipotent subgroup
  \cite[\S12.17]{Rag:book}. Therefore $U=U^+$ and
  $\nor(U)=\cen(A)U^+$. Let $C$ be any compact subset of
  $\nor(U^+)$. Then $C_1=\cl{\{a_{-t}Ca_t:t>0\}}$ is compact. Given
  $\epsilon>0$, let $\cK$ be as in Proposition~\ref{prop:nondiv}. Let
  $\cK_1=C_1\inv\cK$.

  Since $Ux$ is compact, there exists $u\in U\smallsetminus\{e\}$ such that
  $ux=x$. Then $a_{-t}ua_t\to e$ as $i\to\infty$. Therefore by
  \cite[\S1.12]{Rag:book}, $a_{-t}x\not\in\cK_1$ for all $t\geq T_0$
  for some $T_0>0$. Therefore $a_{-T_0}Cx\cap\cK=\emptyset$, or in
  other words, $Cx\cap a_{T_0}\cK=\emptyset$.

  Let $t_k'=t_k-T_0$. Then by Proposition~\ref{prop:nondiv}, for all
  large $k\in\N$,
  \begin{equation*}
    \label{eq:2}
    \abs{\{s\in I: a_{t_k'}u(\phi_k(s))x_k\in \cK\}}\geq (1-\epsilon)\abs{I},
  \end{equation*}
  and hence $\abs{\{s\in I: a_{t_k}u(\phi_k(s))x_k\in
    a_{T_0}\cK\}}\geq (1-\epsilon)\abs{I}$. Since $(G/\Gamma)\smallsetminus
  a_{T_0}\cK$ is a neighbourhood of $Cx$, we conclude that
  $\lambda(Cx)\leq \epsilon$.
\end{proof}

\section{Ratner's theorem and linearization method}

Our basic goal is to prove that the following result using Ratner's
description of the ergodic invariant measure for unipotent flows, and
the linearization technique in combination with the linear dynamical
results proved in \S\ref{sec:linear-dynamics}.

\begin{theo}
  \label{thm:G-inv}
  Let the measure $\lambda$ be as in
  Theorem~\ref{thm:invariant}. Suppose further that the limit function
  $\phi$ satisfies the following condition: For any $(n-2)$-sphere or
  a proper affine subspace $S_1$ contained in $R^{n-1}$,
  \begin{align}
    \label{eq:6}
    \abs{\{s\in I:\phi(s)\in S_1\}}=0.
  \end{align}
  Then measure $\lambda$ is $G$-invariant.
\end{theo}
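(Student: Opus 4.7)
The plan is to combine Ratner's classification of ergodic measures for unipotent flows with the Dani--Margulis linearization technique, using Propositions~\ref{prop:CD} and \ref{prop:S} as the essential inputs. By Theorem~\ref{thm:invariant}, $\lambda$ is $W$-invariant, where $W=\{u(se_1):s\in\R\}$. Ratner's theorem therefore decomposes $\lambda$ into algebraic ergodic components, each supported on a closed orbit $Hg\Gamma$ of a connected closed subgroup $H \supseteq W$ for which $\norone(H)\cap g\Gamma g^{-1}$ is a lattice in $\norone(H)$. To prove $G$-invariance, it suffices to show that the union of the singular orbits (those with $H\neq G$) carries no $\lambda$-mass. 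By Proposition~\ref{prop:cusps}, parabolic cylinders in the cusps are already $\lambda$-null, so the remaining candidates are, up to $\Gamma$-conjugacy, the countable family of proper simple subgroups $H\cong\SO(k,1)$ with $2\leq k\leq n-1$.

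Next I would linearize. For each conjugacy class of such $H$, fix $0\neq p_H\in\wedge^{\dim H}\la{g}$ spanning $\wedge^{\dim H}\la{h}$; by Proposition~\ref{prop:Gp0} the orbit $Gp_H$ is closed, and $g\Gamma$ lies on the closed $H$-orbit through $g_0\Gamma$ iff $g\gamma p_H=g_0p_H\in V^0$ for some $\gamma\in\Gamma$. Following the standard Dani--Margulis--Mozes--Shah reduction, showing that $\lambda$ gives no mass to a neighbourhood of the singular set reduces to the following estimate: given a compact subset $\cC$ of the $A$-fixed singular locus in $V$, for every sufficiently small neighbourhood $\Psi$ of $\cC$ there exists a larger neighbourhood $\Phi$ such that
\[
\limsup_{k\to\infty}\frac{1}{\abs{I}}\abs{\{s\in I:z(s)a_{t_k}u(\phi_k(s))g_k\gamma p_H\in\Psi\}}\leq\epsilon,
\]
uniformly in the relevant $\gamma\in\Gamma$. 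This is exactly the content of Proposition~\ref{prop:CD} applied with $\cA=V^-+V^0$; the size hypothesis on the test vector is supplied by the non-divergence estimate of Proposition~\ref{prop:nondiv} after restricting to a compact $\cK\subset G/\Gamma$.

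Taking $\Psi$ to shrink onto $\cC$ and using weak-$*$ convergence $\lambda_k\to\lambda$, the mass of $\lambda$ on the singular locus is then controlled by the Lebesgue measure of
\[
\{s\in I:u(\phi(s))g_0\gamma p_H\in V^-+V^0\}.
\]
By Proposition~\ref{prop:S}, this set is either empty or the $\phi$-preimage of a proper subsphere or proper affine subspace of $\R^{n-1}$, and hypothesis \eqref{eq:6} then forces it to have Lebesgue measure zero. Summing over the countable collection of $\Gamma$-orbits of pairs $(H,\gamma)$ yields $\lambda(\text{singular set})=0$, so $\lambda$ is $G$-invariant.

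The main obstacle I anticipate is the linearization bookkeeping: exhausting the singular set by a countable union of compact pieces $\cC\subset\cA=V^-+V^0$ in a way compatible with Proposition~\ref{prop:CD}, and handling the uniformity over $\gamma\in\Gamma$ (which requires using the discreteness of $\Gamma p_H$ together with the non-divergence estimate to reduce at each scale to finitely many relevant $\gamma$). Once this organization is in place, the analytic heart of the argument from Section~\ref{sec:linear-dynamics} combined with the geometric identification in Proposition~\ref{prop:S} and the measure-theoretic hypothesis \eqref{eq:6} closes the argument.
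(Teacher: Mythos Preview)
Your high-level strategy is right—Ratner's theorem isolates a proper $H\in\cH$, Proposition~\ref{prop:cusps} rules out non-reductive $H$, then linearization via Proposition~\ref{prop:CD}, and finally Proposition~\ref{prop:S} together with~\eqref{eq:6}—but two of the key identifications are wrong, and together they leave a real gap.

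First, the subspace $\cA$ to which Proposition~\ref{prop:CD} is applied is \emph{not} $V^-+V^0$. Linearization works because $g\in N(H,W)$ is equivalent to $gp_0\in\cA:=\{v\in\wedge^{\ell_0}\la{g}:v\wedge X_0=0\}$, where $X_0$ generates $\Lie(W)$; it is this $\cA$ that contains $\cC:=C\cdot p_0\cup(-C\cdot p_0)$ for a compact $C\subset N(H,W)\setminus S(H,W)\Gamma$ with $\lambda(\pi(C))>0$. Since $W$ is the expanding horospherical subgroup for $a_t$, a $W$-fixed vector lies in $V^0+V^+$, so in general $C\cdot p_0\not\subset V^-+V^0$, and with your choice the hypothesis $\cC\subset\cA$ of Proposition~\ref{prop:CD} fails outright.

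Second, the lower bound $\norm{v}\geq R_0R_1R$ in Proposition~\ref{prop:CD} is not delivered by non-divergence; it is imposed by splitting the discrete orbit $\Gamma p_0$ into the finite set $\Sigma_1$ inside the ball of that radius and its complement $\Sigma_2$. Proposition~\ref{prop:CD} (with the correct $\cA$) handles only $\Sigma_2$. For each $v\in\Sigma_1$ a separate limiting argument is required (Proposition~\ref{prop:Sigma} in the paper): if $z(s)a_{t_k}u(\phi_k(s))g_kv$ stays bounded for $s$ in a set of positive measure as $t_k\to\infty$, then $u(\phi(s))g_0v\in V^-+V^0$ on that set. \emph{This} is where $V^-+V^0$ enters, and only then does Proposition~\ref{prop:S} identify the set of such $s$ as the $\phi$-preimage of a sphere or affine subspace, to which~\eqref{eq:6} applies. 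Your proposal collapses the $\Sigma_2$-mechanism (polynomial escape from the tube over $\cA$) and the $\Sigma_1$-mechanism (weak-stable limiting followed by Proposition~\ref{prop:S}) into a single step, and as written the argument does not close.
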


The rest of this section is devoted to the proof of this theorem.

\subsection{Positive limit measure on singular sets}

Let $\cH$ be the collection of all closed connected subgroups $H$ of
$G$ such that $H\cap\Gamma$ is a lattice in $H$, and a nontrivial
unipotent one-parameter subgroup of $H$ acts ergodically on
$H/H\cap\Gamma$. Then $\cH$ is countable
(\cite[\S5.1]{Shah:uniform},\cite{R:measure}). For $H\in\cH$, we
define
\begin{align}
  \label{eq:59}
  N(H,W)&=\{g\in G:U\subset gHg\inv\},\\
  \label{eq:69}
  S(H,W)&=\bigcup_{\substack{{H'\subset H,\,\dim H'<\dim
        H}\\{H'\in\cH}}} N(H',W).
\end{align}
Then (see~\cite{Mozes+Shah:limit})
\begin{equation}
  \label{eq:35}
  N(H,W)\cap N(H,W)\gamma\subset S(H,W), \quad \forall \gamma\in
  \Gamma\smallsetminus \nor(H). 
\end{equation}

Suppose that $\lambda$ is not $G$-invariant. Then by Ratner's
theorem~\cite{R:measure}, since $\cH$ is countable, there exists
$H\in\cH$ such that $\dim H<\dim G$ and
\begin{equation}
  \label{eq:68}
  \lambda(\pi(N(H,W)))>0 \quad\text{and}\quad \lambda(\pi(S(H,W)))=0,
\end{equation}
where $\pi:G\to G/\Gamma$ is the natural quotient map.

\subsection{Algebraic consequence of accumulation of limit measure on
  singular sets}

Since $G$ is a semisimple group of real rank one and $H\in\cH$, if $H$
is not reductive then $H$ is contained in a unique maximal unipotent
subgroup intersecting $\Gamma$ in a cocompact lattice. Hence for any
$g\in N(H,W)$, we have that $gHg\inv$ is contained in a maximal
unipotent subgroup $U$ of $G$ containing $W$ such that $U\pi(g)$ is
compact, and $\pi(N(H,W))\subset \nor(U)\pi(g)$. Now by
Proposition~\ref{prop:cusps} we have $\pi(N(H,W))=0$. Thus in view of
\eqref{eq:68}, we conclude that $H$ is a reductive subgroup of $G$.

We choose a compact set $C\subset N(H,W)\smallsetminus S(H,W)\Gamma$ and
$\epsilon>0$ such that
\begin{equation}
  \label{eq:70}
  0<\epsilon<2\lambda(\pi(C)). 
\end{equation}

Let $H^\nc$ denote the subgroup of $H$ generated by all unipotent
one-parameter subgroups contained in it. Since $H$ is a proper
reductive subgroup of $G=\SO(n,1)$, we have that $H^\nc\cong \SO(k,1)$
for some $2\leq k\leq n-1$, and $H=Z_1H^\nc$ where $Z_1$ is a compact
central subgroup of $H$. Moreover $\nor(H^\nc)=M_1H^\nc$, where $M_1$
is the centralizer of $H^\nc$ in $G$ which is compact. Since
$H\in\cH$, we have that $\cl{H^\nc\Gamma}=H\Gamma$.

Let $\la{H}^\nc$ denote the Lie algebra associated to $H^\nc$ and
$\ell_0=\dim \la{H}^\nc$. Let
\begin{equation}
  \label{eq:30}
  p_0\in \wedge^{\ell_0}\la{H}^\nc\smallsetminus\{0\}.
\end{equation}
Then $F:=\Stab(p_0)=F$ and $\nor(H^\nc)p_0\subset\{p_0,-p_0\}$. If
$\gamma\in \nor(H^\nc)\cap\Gamma$, then
\[
\gamma H\Gamma=\gamma\cl{H^\nc\Gamma}=\cl{H^\nc\Gamma}=H\Gamma.
\]
Therefore $\gamma\in \nor(H)$. Thus
\begin{equation}
  \label{eq:31}
  \Gamma\cap \nor(H^\nc)=\Gamma\cap\nor(H).
\end{equation}

Let $X_0\in\Lie(W)$ and
\begin{equation}
  \label{eq:32}
  \cA=\{v\in \wedge^{\ell_0}\la{g}: v\wedge X_0=0\}.
\end{equation}
Then $\cA$ is a linear subspace of $V$. For any $g\in G$,
\begin{equation}
  \label{eq:33}
  gp_0\in\cA\Leftrightarrow \wedge^{\ell_0}\la{H}^\nc \wedge
  \Ad(g\inv)X_0=0\Leftrightarrow H^\nc \supset g\inv W g\Leftrightarrow
  g\in N(H,W). 
\end{equation}
Therefore for any $g\in G$, $g\in N(H,W)\Leftrightarrow gp_0\in \cA$.

Since $F/H$ is compact, and $H\Gamma$ is closed, we have that
$F\Gamma$ is closed. Therefore $\Gamma F$ is closed. By
Proposition~\ref{prop:Gp0}, the map $gF\mapsto gp_0$ from $G/F$ to $V$
is proper. Therefore $\Gamma p_0$ is closed in $V$. Hence $\Gamma p_0$
is discrete.

Given any compact set $\cD$ of $\cA$, we define
\begin{equation}
  \label{eq:34}
  \sS(\cD)=\{g\in G: gp_0,g\gamma p_0\in \cD \text{ for some } \gamma\in
  \Gamma\smallsetminus \nor(H^\nc)\}.
\end{equation}
Due to \eqref{eq:35} and \eqref{eq:31}, $\sS(\cD)\subset S(H,W)$ and
$\pi(\sS(\cD))$ is closed in $G/\Gamma$
\cite[Prop.~3.2]{Mozes+Shah:limit}. Now if $\cK$ is any compact set
contained in $G\smallsetminus \pi(\sS(\cD))$ then there exists a
neighbourhood $\Phi$ of $\cD$ in $V$ such that for any $g\in G$ and
$\gamma_1,\gamma_2\in \Gamma$,
\begin{equation}
  \label{eq:36}
  \pi(g)\in\cK,\, \{g\gamma_1p_0,g\gamma_2p_0\}\subset
  \cl{\Phi} \Rightarrow \gamma_1\in \gamma_2\nor(H^\nc) 
  \Rightarrow  g\gamma_1p_0=\pm g\gamma_2 p_0.   
\end{equation}

Let $\cC=C\cdot p_0\cup -(C\cdot p_0)\subset\cA$. Given $\epsilon>0$
we obtain $R_0$ and a compact set $\cD\subset \cA$ as in
Proposition~\ref{prop:CD}. Replacing $\cD$ by $\cD\cup -\cD$ we assume
that $\cD$ is symmetric about $0$. We choose a compact neighbourhood
$\cK$ of $\pi(C)$ contained in $G/\Gamma\smallsetminus\pi(\sS(\cD))$. We
take any symmetric neighbourhood $\Phi$ of $\cD$ such \eqref{eq:36}
holds. Then there exist a symmetric neighbourhood $\Psi$ of $\cC$ in
$V$ and $k_2\in\N$ such that given any subinterval $J$ of $I$ the
following holds: If $v\in V$ with
\begin{equation}
  \label{eq:38}
  \norm{v}\geq R_0R_1R, \quad \text{where $R=\sup_{w\in\Phi}\norm{w}$},  
\end{equation}
and if $k\geq k_2$ such that $e^{-t_k}<\abs{J}^n$, then
\begin{equation}
  \label{eq:56b}
  \begin{split}
    &\abs{\{s\in J:z(s)a_{t_k}u(\phi_k(s))v\in \Psi\}}\\
    \leq\epsilon\,&\abs{\{s\in J:z(s)a_{t_k}u(\phi_k(s))v\in\Phi\}}.
  \end{split}
\end{equation}

Let $\cO=\{\pi(g):gp_0\in\Psi, \pi(g)\in\cK\}$. Then $\cO$ is a
neighbourhood of $\pi(C)$. Take any $k\in\N$. Let
\begin{equation}
  \label{eq:72}
  E(k)=\{s\in I: z(s)a_{t_k}u(\phi_k(s))\pi(g_k)\in\cO\}.
\end{equation}
Then by \eqref{eq:70},
\begin{equation}
  \label{eq:61}
  \abs{E(k)}>2\epsilon\abs{I},\quad\text{$\forall$ large $k\in\N$}.
\end{equation}

Let $B$ denote the ball of radius $R_0R_1R$ centered at $0$. Let
\begin{equation}
  \label{eq:39}
  \Sigma_1=\Gamma p_0\cap B \text{ and } \Sigma_2=\Gamma
  p_0\smallsetminus B.
\end{equation}
For $j=1,2$, let
\begin{equation}
  \label{eq:40} 
  E_j(k)=\{s\in E(k):z(s)a_{t_k}u(\phi(s))g_k v\in\Psi, 
  \text{ for some } v\in \Sigma_j\}.
\end{equation}
Then $E(k)=E_1(k)\cup E_2(k)$.

Now using \eqref{eq:36} and \eqref{eq:56b}, by the argument as in the
proofs of \cite{Dani+Mar:limit}, \cite[Prop.~3.4]{Mozes+Shah:limit} or
\cite[Prop.~4.5]{Shah:son1}, there exists $k_4\in\N$ such that
\begin{equation}
  \label{eq:73}
  \abs{E_2(k)}\leq \epsilon \abs{I}.
\end{equation}

Next we want to prove that $\abs{E_1(k)}\leq \epsilon\abs{I}$ for all
large $k\in\N$. Suppose this is not true. Then
\begin{equation}
  \label{eq:79}
  \limsup_{k\in\N} \abs{E_1(k)}\geq \epsilon\abs{I}.
\end{equation}

\begin{prop}
  \label{prop:Sigma}
  There exists $v\in \Sigma_1$ such that
  \begin{equation}
    \label{eq:75}
    \abs{\{s\in I: u(\phi(s))g_0v\in V^-+V^0\}}\geq 
    \epsilon\abs{I}/\card{\Sigma_1}.
  \end{equation}
\end{prop}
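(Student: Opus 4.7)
The plan is to combine the hypothesis \eqref{eq:79} with a pigeonhole argument on the finite set $\Sigma_1$, then pass to a limit using the expansion properties of $a_t$ on $V^+$.

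First I would observe that $\Sigma_1 = \Gamma p_0 \cap B$ is finite, because $\Gamma p_0$ is discrete in $V$ (as established earlier from closedness of $G p_0$ and properness of the map $gF\mapsto gp_0$) while $B$ is bounded. Thus by \eqref{eq:79} and pigeonhole, there exist a single $v\in \Sigma_1$ and a subsequence (still indexed by $k$) along which
\[
\abs{A_k}\geq \frac{\epsilon\abs{I}}{\card{\Sigma_1}}-\frac{1}{k}, \qquad
A_k\dfn\{s\in I: z(s)a_{t_k}u(\phi_k(s))g_k v\in \Psi\}.
\]
Set $A=\limsup_k A_k$; by the reverse Fatou lemma, $\abs{A}\geq \epsilon\abs{I}/\card{\Sigma_1}$.

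Next I would show that each $s\in A$ satisfies $u(\phi(s))g_0 v\in V^-+V^0$. Since $z:I\to \cen(A)$ is continuous on the compact interval $I$, its operator norm and that of its inverse are uniformly bounded. Together with the boundedness of $\Psi$, this yields a constant $C>0$ (independent of $s$ and $k$) such that
\[
\norm{a_{t_k}u(\phi_k(s))g_k v}\leq C, \qquad \forall s\in A_k.
\]
Decompose $u(\phi_k(s))g_k v = w^+_k(s)+w^0_k(s)+w^-_k(s)$ with respect to $V=V^+\oplus V^0\oplus V^-$. Since every eigenvalue of $a_{t_k}$ on $V^+=\sum_{\mu>0}V_\mu$ is at least $e^{t_k}$, the bound above forces
\[
\norm{w^+_k(s)}\leq C e^{-t_k} \quad\text{for all } s\in A_k,
\]
so $w^+_k(s)\to 0$ along any subsequence $\{k_j\}$ with $s\in A_{k_j}$, and such a subsequence exists whenever $s\in A$.

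On the other hand, since $\phi_k\to\phi$ in $C^n(I,\R^{n-1})$ (hence uniformly on $I$) and $g_k\to g_0$ in $G$, the map $s\mapsto u(\phi_k(s))g_k v$ converges uniformly on $I$ to $s\mapsto u(\phi(s))g_0 v$. Applying the continuous projection onto $V^+$, we obtain $w^+_k(s)\to w^+(s)$ uniformly on $I$, where $w^+(s)$ denotes the $V^+$-component of $u(\phi(s))g_0 v$. Combined with $w^+_{k_j}(s)\to 0$ for $s\in A$, this gives $w^+(s)=0$, i.e., $u(\phi(s))g_0 v\in V^-+V^0$. Therefore
\[
\abs{\{s\in I: u(\phi(s))g_0 v\in V^-+V^0\}}\geq \abs{A}\geq \epsilon\abs{I}/\card{\Sigma_1},
\]
as required. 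The only mildly delicate point is the uniform control on $\norm{z(s)^{-1}}_V$, which is immediate from compactness of $I$ and continuity of $z$; the rest of the argument is soft.
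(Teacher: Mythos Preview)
Your proof is correct and follows essentially the same approach as the paper: pigeonhole over the finite set $\Sigma_1$ to isolate a single $v$, then use that membership in the bounded set $\Psi$ forces the $V^+$-component of $u(\phi_k(s))g_kv$ to be $O(e^{-t_k})$, and pass to the limit via uniform convergence. The only cosmetic difference is that the paper organizes the limiting step through the sets $E^\delta=\{s:\norm{q_+(u(\phi(s))g_0v)}\geq\delta\}$ and shows $E^\delta\cap A_k=\emptyset$ for large $k$, whereas you use $\limsup_k A_k$ and reverse Fatou; these are equivalent formulations of the same idea.
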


\begin{proof}
  After passing to a subsequence, there exists $v\in \Sigma_1$ such
  that
  \begin{equation}
    \label{eq:78}
    \abs{\{s\in E(k): z(s)a_{t_k}u(\phi_k(s))g_kv\in\Psi\}}\geq
    \epsilon\abs{I}/\card{\Sigma_1}, \quad \text{$\forall$ large
      $k\in\N$}.   
  \end{equation}
  Let $q_+:V\to V^+$ be the projection associated to the decomposition
  $V=V^-\oplus V^0\oplus V^+$ as in \eqref{eq:101}. Let
  \begin{equation}
    \label{eq:76}
    E^\delta=\{s\in I: q_+(u(\phi(s))g_0v)\geq \delta\}.
  \end{equation}
  Since $\phi_k\to\phi$ uniformly on $I$, there exists $k_5\in\N$ such
  that if $k\geq k_5$ and if $s\in E^\delta$, then
  $q_+(z(s)u(\phi_k(s))g_kv)\geq \delta/2$. Hence
  \begin{equation}
    \label{eq:77}
    z(s)a_{t_k}u(\phi_k(s))g_kv=a_{t_k}(z(s)u(\phi(s))g_k)\not\in \Psi,
    \quad\text{$\forall$ large $k\in\N$}.   
  \end{equation}
  Therefore in $E^\delta\cap E_1(k)=\emptyset$ for all large
  $k\in\N$. Hence by \eqref{eq:78} and \eqref{eq:76},
  \[
  \abs{\{s\in I: u(\phi(s))g_0v\in V^-+V^0\}}=\lim_{\delta\to0}
  \abs{I\smallsetminus E^\delta}\geq\epsilon\abs{I}/\card{\Sigma_1}.
  \]
\end{proof}

Combining Proposition~\ref{prop:S} and Proposition~\ref{prop:Sigma},
there exists $\gamma\in\Gamma$ such that $\gamma p_0=v\in\Sigma_1$ and
\begin{equation}
  \label{eq:80}
  \abs{\{s\in I: \phi(s)\in S_{g_0\gamma}\}}\geq \epsilon\abs{I}/\card{\Sigma}.  
\end{equation}
This statement contradicts our assumption on $\phi$ as stated in
\eqref{eq:6}. Therefore \eqref{eq:79} does not hold, or in other
words,
\begin{equation}
  \label{eq:81}
  \limsup_{k\to\infty} \abs{E_1(k)}<\epsilon\abs{I}.
\end{equation}
Therefore by \eqref{eq:73},
\[
\abs{E(k)}\leq \abs{E_1(k)}+\abs{E_2(k)}< 2\epsilon\abs{I}, \quad
\text{$\forall$ large $k\in\N$}.
\]
This contradicts \eqref{eq:61}. Thus as noted above, due to Ratner's
theorem, $\lambda$ is $G$-invariant.  This completes the proof of
Theorem~\ref{thm:G-inv}.  \qed

\begin{rem}
  \label{rem:sS}
  If $g_0=e$, then in \eqref{eq:80}, $S_{g_0\gamma}=S_\gamma$ for some
  $\gamma\in\Gamma$, and by Remark~\ref{rem:sS1},
  $S_\gamma\in\cS\inv(\sS)$. Therefore if $g_0=e$, then the conclusion
  of Theorem~\ref{thm:G-inv} is valid if we assume the weaker
  condition on $\phi$ that \eqref{eq:6} holds for all
  $S_1=\cS\inv(S)$, where $S\in\sS$.
\end{rem}

\section{Deduction of the main results}

The following observation allows us to deduce the results stated in
the introduction from Theorem~\ref{thm:G-inv}.

\begin{prop}
  \label{prop:P-}
  Let $\{\theta_k\}$ and $\{\psi_k\}$ be uniformly convergent
  sequences of continuous maps from $I\to G$ such
  $P^-\theta_k(s)=P^-\psi_k(s)$ for all $s\in I$. Let $\{x_k\}$ be a
  sequence in $G/\Gamma$ and $t_k\to\infty$ be a sequence in
  $\R$. Suppose that there exists a probability measure $\mu$ on
  $G/\Gamma$ which is $\cen(A)$ invariant, and for any subinterval
  $J\subset I$ with nonempty interior and any $f\in\Cc(G/\Gamma)$ the
  following holds:
  \begin{equation}
    \label{eq:85}
    \lim_{k\to\infty}  \frac{1}{\abs{J}}\int_{J}f(a_{t_k}\theta_k(s)x_k)\,ds =
    \int_{G/\Gamma} f\,d\mu.
  \end{equation}
  Then for any $f\in\Cc(G/\Gamma)$,
  \begin{equation}
    \label{eq:85b}
    \lim_{k\to\infty}  \frac{1}{\abs{I}}\int_{I}f(a_{t_k}\psi_k(s)x_k)\,ds =
    \int_{G/\Gamma} f\,d\mu.
  \end{equation}
\end{prop}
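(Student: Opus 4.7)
The plan is to exploit the Bruhat decomposition $P^-=U^-\cen(A)$ from \eqref{eq:84}: the factor $U^-$ is contracted to the identity by conjugation with $a_{t_k}$, while $\cen(A)$ commutes with $a_{t_k}$ and is a symmetry of the limit measure $\mu$. This should let us transfer the convergence known for $\theta_k$ to $\psi_k$.

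Concretely, from $P^-\theta_k(s)=P^-\psi_k(s)$ we set $c_k(s):=\psi_k(s)\theta_k(s)\inv\in P^-$, and use the diffeomorphism $U^-\times\cen(A)\to P^-$, $(u,z)\mapsto uz$, to write $c_k(s)=u_k(s)z_k(s)$ with $u_k:I\to U^-$ and $z_k:I\to\cen(A)$ continuous. Since $\theta_k,\psi_k$ converge uniformly, $c_k$ is uniformly bounded and $u_k,z_k$ converge uniformly to limits $u_\infty,z_\infty$. Using that $z_k(s)$ commutes with $a_{t_k}$,
\begin{equation*}
  a_{t_k}\psi_k(s)x_k
  =\bigl(a_{t_k}u_k(s)a_{t_k}\inv\bigr)\,z_k(s)\,a_{t_k}\theta_k(s)x_k
  =:\epsilon_k(s)\,z_k(s)\,a_{t_k}\theta_k(s)x_k,
\end{equation*}
and since $U^-$ is contracted by $\Ad(a_t)$ as $t\to\infty$ and $\{u_k(s):s\in I,k\in\N\}$ is precompact, $\epsilon_k(s)\to e$ uniformly in $s$.

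Now fix $f\in\Cc(G/\Gamma)$ and $\eta>0$. Since $f$ is left uniformly continuous on $G/\Gamma$, the uniform convergence $\epsilon_k\to e$ gives
\begin{equation*}
  \sup_{s\in I}\bigl|f(a_{t_k}\psi_k(s)x_k)-f(z_k(s)a_{t_k}\theta_k(s)x_k)\bigr|<\eta
  \qquad\text{for all large }k.
\end{equation*}
Next, by uniform continuity of $z_\infty$ on the compact interval $I$, partition $I$ into finitely many subintervals $J_1,\dots,J_N$ with nonempty interior, pick $s_i\in J_i$, and arrange (by choosing the partition fine enough and $k$ large) that $z_k(s)$ stays close to $z_k(s_i)$ on $J_i$, so another application of the uniform continuity of $f$ gives
\begin{equation*}
  \Bigl|\frac{1}{|I|}\int_I f(z_k(s)a_{t_k}\theta_k(s)x_k)\,ds
  -\sum_{i=1}^N\frac{1}{|I|}\int_{J_i}f_{k,i}(a_{t_k}\theta_k(s)x_k)\,ds\Bigr|<\eta,
\end{equation*}
where $f_{k,i}(y):=f(z_k(s_i)y)\in\Cc(G/\Gamma)$. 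Since $z_k(s_i)\to z_\infty(s_i)$, $f_{k,i}$ converges uniformly to $f_i(y):=f(z_\infty(s_i)y)\in\Cc(G/\Gamma)$, so we may replace $f_{k,i}$ by $f_i$ in the integral with arbitrarily small error.

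Finally, the hypothesis \eqref{eq:85} applied on each subinterval $J_i$ to the (fixed, $k$-independent) test function $f_i$ yields
\begin{equation*}
  \frac{1}{|J_i|}\int_{J_i}f_i(a_{t_k}\theta_k(s)x_k)\,ds
  \xrightarrow{k\to\infty}\int_{G/\Gamma}f_i\,d\mu
  =\int_{G/\Gamma}f\,d\mu,
\end{equation*}
the last equality by $\cen(A)$-invariance of $\mu$ and $z_\infty(s_i)\in\cen(A)$. Weighting by $|J_i|/|I|$, summing, and letting $\eta\to 0$ yields \eqref{eq:85b}. The main obstacle is purely bookkeeping — keeping the three successive approximations (contraction of $U^-$, piecewise-constant replacement of $z_k$, and $f_{k,i}\to f_i$) uniform in $s$ — but each is a routine application of uniform continuity, so no genuine technical difficulty is expected.
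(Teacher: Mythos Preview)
Your proposal is correct and follows essentially the same approach as the paper's proof: both decompose $\psi_k(s)\theta_k(s)\inv\in P^-=U^-\cen(A)$, use the contraction of the $U^-$-factor under $\Ad(a_{t_k})$ together with uniform continuity of $f$, partition $I$ into subintervals on which the $\cen(A)$-factor is nearly constant, apply the hypothesis \eqref{eq:85} on each piece, and conclude via $\cen(A)$-invariance of $\mu$. If anything, your version is slightly more careful in explicitly passing from the $k$-dependent test functions $f_{k,i}$ to the fixed $f_i$ before invoking \eqref{eq:85}.
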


\begin{proof}
  Since $P^-=U^-\cen(A)$ (see \eqref{eq:84}), for any $s\in I$ we
  express $\psi_k(s)=v(s)\zeta(s)\theta_k(s)$, where $\zeta_k(s)\in
  \cen(A)$ and $v(s)\in U^-$ are such that
  $\{s\mapsto\zeta_k(s)\}_{k\in\N}$ and $\{s\mapsto v_k(s)\}_{k\in\N}$
  are equi-continuous families of maps on $I$.

  Let $\epsilon>0$. Since $f$ is uniformly continuous on $G/\Gamma$
  and $\cl{\{v_k(s):s\in I, k\in\N\}}$ is compact in $U^-$, there
  exists $k_1\in\N$ such that for any $k\geq k_1$, $s\in I$ and $x\in
  G/\Gamma$, $\abs{f(a_{t_k}v_k(s)x)-f(a_{t_k}x)}\leq\epsilon$.

  Also there exists a finite partition of $I$ onto subintervals $J$'s
  such that if $s_1,s_2\in J$ and $x\in G/\Gamma$ then
  $\abs{f(\zeta_k(s_1)x)-f(\zeta_k(s_2)x)}<\epsilon$ for all $k\in\N$.

  If we fix some $s_J\in J$, then for all $s\in J$ and $k\geq k_1$,
  \begin{equation}
    \label{eq:82}
    \begin{split}
      &\abs{f(a_{t_k}\psi_k(s)x_k)-f(\zeta(s_J)a_{t_k}\theta_k(s)x_k)}\\
      &\leq
      \abs{f(a_{t_k}v_k(s)\zeta_k(s)\theta_k(s)x_k)-f(a_{t_k}\zeta_k(s)\theta_k(s)x_k)}\\
      &\qquad +
      \abs{f(\zeta_k(s)a_{t_k}\theta_k(s)x_k)-f(\zeta_k(s_J)a_{t_k}\theta_k(s)x_k)}
      \leq 2\epsilon.
    \end{split}
  \end{equation}
  Since $\int f(\zeta(s_J)y)\,d\mu(y)=\int f(y)\,d\mu$, from
  \eqref{eq:82} and \eqref{eq:85},
  \begin{equation}
    \label{eq:86}
    \Abs{\int_{J}f(a_{t_k}\psi_k(s)x_k)\,ds - \abs{J}\int_{G/\Gamma}
      f\,d\mu}\leq 2\epsilon\abs{J},\quad\text{$\forall$ large $k\in\N$.}
  \end{equation}
  By summing this over all the $J$'s in the partition, we deduce
  \eqref{eq:85b}.
\end{proof}

\begin{proof}[Proof of Theorem~\ref{thm:main}]
  Let $\theta_k(s)=\zeta(s)u(\phi_k(s))$ and $\psi_k(s)=u(\phi_k(s))$
  for all $s\in I$. Then by Theorem~\ref{thm:G-inv}, the \eqref{eq:85}
  holds for $\mu=\mu_G$. Therefore by Proposition~\ref{prop:P-}, we
  have \eqref{eq:85b}, which is same as \eqref{eq:4}.
\end{proof}

\begin{proof}[Proof of Theorem~\ref{thm:curve}]
  Due to regularity of Lebesgue measure, it is enough to prove the
  theorem under the assumption that
  \begin{equation}
    \label{eq:64}
    (\cI\circ\psi)^{(1)}\neq 0,\quad\forall s\in I.
  \end{equation}

  The map $\cS:\R^{n-1}\to \Sn^{n-1}$ defined by $\cS(x)=\cI(u(x))$ is
  the inverse stereographic projection. Therefore without loss of
  generality, we may assume that there exists a sequence
  $\phi_k\to\phi$ in $C(I,\R^{n-1})$ such that
  $\cI(\psi_k(s))=\cI(u(\phi_k(s))$ and $\cI(\psi(s))=\cI(u(\phi(s))$
  for all $s\in I$. Then by \eqref{eq:64} and \eqref{eq:90},
  \[
  \phi^{(1)}(s)\neq 0,\quad\text{and}\quad \abs{\{s\in
    I:\phi(s)\not\in \cS\inv(S)\}}=0, \quad\forall S\in\sS.
  \]
  Therefore by Remark~\ref{rem:sS}, the conclusion of
  Theorem~\ref{thm:G-inv} holds in the case of $x_k\to
  x_0=e\Gamma$. Therefore, since
  $P^-\psi_k(s)=P^-u(\zeta(s)\phi_k(s))$ for all $s\in I$,
  \eqref{eq:89} follows from Proposition~\ref{prop:P-}.
\end{proof}

\begin{proof}[Proof of Theorem~\ref{thm:uniform}]
  If the result fails to hold then there exist $f\in\Cc(G/\Gamma)$,
  $\epsilon>0$, a sequence $x_k\to x$ in $G/\Gamma$, a sequence
  $\{\psi_k\}$ of functions from $I\to G$ such that
  $\cI\circ\psi_k\mapsto \psi$ in $C^n(I,\Sn^{n-1})$, and an unbounded
  sequence $g_k\to\infty$ such that
  \begin{equation}
    \label{eq:87}
    \Abs{\frac{1}{\abs{I}}\int_If(g_k\psi_k(s)x_k)\,ds-\int_I
      f\,d\mu_G}\geq \epsilon.
  \end{equation}
  Since $G=KA^+K$, by passing to a subsequence, for each $k\in\N$ we
  have $g_k=h_k'a_{t_k}h_k$, where $h_k\to h$ and $h_k'\to h'$ in $K$
  as $k\to\infty$, and $t_k\to\infty$ in $\R$. Let $\tilde x\in G$ and
  $\tilde x_k\in G$ be such that $x_k=\tilde x_k\Gamma$, $x=\tilde
  x\Gamma$ and $\tilde x_k\to \tilde x$ as $k\to\infty$.

  Let $\bar\psi(s)=h\psi(s)\tilde x$ and
  $\bar\psi_k(s)=h_k\psi_k(s)\tilde x_k$ for all $s\in I$ and
  $k\in\N$. Then the condition of Theorem~\ref{thm:uniform} is
  satisfied for $\bar\psi$ in place of $\psi$ and $\bar\psi_k$ in
  place of $\psi_k$; note that we have used a stronger condition on
  $\psi$ that that \eqref{eq:94} holds for all proper subspheres $S$
  of $\Sn^{n-1}$ and $h\in G$. Therefore
  \begin{equation}
    \label{eq:93}
    \lim_{k\to\infty}\frac{1}{\abs{I}}\int_I
    f(h'a_{t_k}\bar\psi_k(s)\Gamma)\,ds=\int_{\gmg}
    f(h'y)\,d\mu_G(y)=\int_{\gmg} f\,d\mu_G. 
  \end{equation}
  Since $h_k'\to h'$ and $f$ is uniformly continuous, this equality
  contradicts \eqref{eq:87}.
\end{proof}

\begin{proof}[Proof of Theorem~\ref{thm:M}]
  As in the proof of Theorem~\ref{thm:uniform}, we need to show that
  given sequences $\psi_k\xrightarrow{k\to\infty} \psi$ in
  $C^n(I,T^1(M))$ and $t_k\xrightarrow{k\to\infty} \infty$ in $\R$,
  \begin{equation}
    \label{eq:91}
    \lim_{k\to\infty}\frac{1}{\abs{I}}\int_I
    f(g_{t_k}\psi_k(s))\,ds=\int_{T^1(M)} f\,d\mu, 
    \quad\forall f\in\Cc(T^1(M)). 
  \end{equation}  
  We will deduce this statement from Theorem~\ref{thm:curve}.
  
  There exists a lattice $\Gamma$ in $G=\SO(n,1)$ such that
  $T^1(M)\cong \SO(n-1)\backslash G/\Gamma$ and $T^1(\HH^n)\cong
  \SO(n-1)\backslash G$. Moreover the geodesic flow $\{g_t\}$ on
  $T^1(M)$ corresponds to the translation action of $\{a_t\}$ on
  $\SO(n-1)\backslash G/\Gamma$ from the left; the action is well
  defined because $\SO(n-1)\subset\cen(\{a_t\})$. Now the maps
  $\Vis:T^1(\HH^n)\to \Sn^{n-1}$ and $\bar\cI:\SO(n-1)\backslash G\to
  \Sn^{n-1}$ are same under the above identifications. Also the sets
  $\sS$ defined in \eqref{eq:95} and \eqref{eq:88}, as subsets of
  $\partial \HH^n$ and $P^-\backslash G$ respectively, are same under
  the above identification.

  The convergent sequence $\psi_k\to\psi$ in $C(I,T^1(M))$ can be
  lifted to a convergent sequence $\tilde \psi_k\to \tilde\psi$ in
  $C(I,T^1(\HH^n))$. Via the above correspondence, we obtain a
  convergent sequence $\tilde{\tilde{\psi}}_k\to\tilde{\tilde{\psi}}$
  in $C(I,G)$ such that $\Vis(\tilde
  \psi_k(s))=\cI(\tilde{\tilde{\psi}}_k(s))$ and
  $\Vis(\tilde\psi(s))=\cI(\tilde{\tilde{\psi}}(s))$. Therefore the
  conditions {a)} and {b)} on $\psi$ imply the condition \eqref{eq:90}
  of Theorem~\ref{thm:curve} for the map $\tilde{\tilde{\psi}}$. Also
  the required convergence property \eqref{eq:98} is satisfied for
  $\{\tilde{\tilde{\psi}}_k\}$. Now any $f\in\Cc(T^1(M))$ can be
  treated as a $\SO(n-1)$-invariant function on $G/\Gamma$. In this
  case, the conclusion \eqref{eq:89} of Theorem~\ref{thm:curve}
  holds. Therefore \eqref{eq:91} follows.
\end{proof}

The Theorem~\ref{thm:basic} is a special case of Theorem~\ref{thm:M}.

\section{Action of $\{a_t\}$ on shrinking curves}

Let $S\in \sS$ or $S=\Sn^{n-1}$. Define
\begin{equation}
  \label{eq:7}
  S^\ast= S\setminus \bigcup_{\substack{S'\subset S,\ \dim S'<\dim S\\S'\in \sS}} S'.
\end{equation}
Let $\phi\in C^n(I,\R^{d-1})$ and $g_0\in G$. We define
\begin{equation}
  \label{eq:22}
  I(S)=\{s\in I: \phi(s)\in \cS\inv(S^\ast g_0\inv)\}.  
\end{equation}
By the Lebesgue density theorem, almost every $x\in I(S)$ is a {\em
  density point\/} of $I(S)$; that is, if $I_k$ is any sequence of
intervals in $I$ containing $x$ such that $\abs{I_k}\to 0$, then
$\abs{I(S)\cap I_k}/\abs{I_k}\to 1$ as $k\to\infty$.

\begin{theo}
  \label{thm:short-generic}
  Let $x\in I$ such that $\phi^{(1)}(x)\neq 0$ and that $x$ is a
  density point for $I(S)$, where $S=\Sn^{n-1}$. Then for any
  sequences $\phi_k\to\phi$ in $C^n(I,\R^{n-1})$, $g_k\to g_0$ in $G$,
  $t_k\to\infty$ in $\R$ and any sequence of intervals
  $I_k\subset[a,b]$ such that $x\in I_k$, $\abs{I_k}\to 0$, and
  $\abs{I_k}^ne^{t_k}\to\infty$ the following holds: For any
  $f\in\Cc(G/\Gamma)$,
  \begin{equation}
    \lim_{k\to\infty}\frac{1}{\abs{I_k}}
    \int_{I_k}f(a_{t_k}u(\phi_k(s))g_0)\,ds = \int_{G/\Gamma} f\,d\mu_G.
  \end{equation}
\end{theo}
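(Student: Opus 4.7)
The plan is to run the argument of Theorem~\ref{thm:G-inv} essentially verbatim, but with the fixed interval $I$ replaced by the shrinking intervals $I_k$. The crucial observation is that the hypothesis $\abs{I_k}^n e^{t_k}\to\infty$ is precisely what is needed for Proposition~\ref{prop:CD} to apply with $J=I_k$: for all large $k$ one has $e^{-t_k}<\abs{I_k}^n$, which is the only size condition imposed on the ambient interval in that proposition. Consequently, every application of the linear-dynamical machinery of \S\ref{sec:linear-dynamics} transfers directly to the shrinking setting, and the density hypothesis at $x$ will play the role of the original genericity condition on $\phi$ used in Theorem~\ref{thm:G-inv}.

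Concretely, I would set
\begin{equation*}
\mu_k=\frac{1}{\abs{I_k}}\int_{I_k}\delta_{z(s)a_{t_k}u(\phi_k(s))g_0\Gamma}\,ds
\end{equation*}
and apply Proposition~\ref{prop:CD} with $J=I_k$ to obtain, via the Dani-type reasoning of Proposition~\ref{prop:nondiv}, a compact $\cK\subset G/\Gamma$ with $\mu_k(\cK)\geq 1-\epsilon$ for large $k$. After passing to a subsequence, $\mu_k\to\lambda$ weakly. The argument of Theorem~\ref{thm:invariant} then shows $\lambda$ is $W$-invariant: a left translation by an element of $W$ amounts to an $s$-shift of size $O(e^{-t_k})$, whose boundary contribution is of order $e^{-t_k}/\abs{I_k}$, and this tends to $0$ because $\abs{I_k}e^{t_k}\geq\abs{I_k}^n e^{t_k}\to\infty$ once $\abs{I_k}\leq 1$. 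Proposition~\ref{prop:P-} then transfers invariance and the limit identity to the curve $s\mapsto a_{t_k}u(\phi_k(s))g_0$, exactly as in the deduction of Theorem~\ref{thm:main} from Theorem~\ref{thm:G-inv}.

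It remains to show $\lambda=\mu_G$. Suppose not. The linearization argument in the proof of Theorem~\ref{thm:G-inv}, together with Ratner's theorem, applies without change, because every invocation of Proposition~\ref{prop:CD} there only requires the ambient subinterval to satisfy $\abs{J}^n e^{t_k}>1$, which $I_k$ does. It produces, after a further subsequence along which the finite set $\Gamma p_0\cap B$ gives a single element $v=\gamma p_0$, a constant $\epsilon>0$ such that
\begin{equation*}
\abs{\{s\in I_k:\phi(s)\in\cS\inv(S_{g_0\gamma})\}}\geq\frac{\epsilon\abs{I_k}}{\card{\Gamma p_0\cap B}}
\end{equation*}
for infinitely many $k$, where by Proposition~\ref{prop:S} and the base-point-$g_0$ version of Remark~\ref{rem:sS1}, $S_{g_0\gamma}$ is the $\cS$-preimage of a sphere of the form $Sg_0\inv$ with $S\in\sS$. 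But the hypothesis that $x$ is a density point of $I(\Sn^{n-1})=I\setminus\bigcup_{S\in\sS}I(S)$ forces $\abs{I_k\cap\{s:\phi(s)\in\cS\inv(Sg_0\inv)\}}/\abs{I_k}\to 0$ for every $S\in\sS$, contradicting the lower bound. Hence $\lambda=\mu_G$ for every weak-$*$ limit, and the asserted equidistribution follows.

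The main obstacle is purely bookkeeping: one has to verify that no step of the original fixed-interval machinery tacitly uses the ambient interval being macroscopic, so that all estimates depend only on the uniform bound $\abs{I_k}^n e^{t_k}>1$. A related subtlety is that, because the bad set $\Gamma p_0\cap B$ and the element $\gamma\in\Gamma$ arising from the linearization contradiction depend a priori on $k$, one must pass to a subsequence to stabilize $\gamma$ before invoking the density hypothesis at the single point $x$.
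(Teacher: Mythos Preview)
Your proposal is correct and follows essentially the same route as the paper's own proof: run the machinery of \S\ref{sec:linear-dynamics}--\S4 with $I_k$ in place of $I$, noting that the condition $\abs{I_k}^n e^{t_k}\to\infty$ is exactly the size hypothesis needed in Proposition~\ref{prop:CD}, and replace the global genericity assumption \eqref{eq:6} by the density-point hypothesis at $x$ to derive the contradiction at the analogue of \eqref{eq:80}. The only cosmetic difference is that you work with the $z(s)$-twisted measures (as in \eqref{eq:mui}) and then invoke Proposition~\ref{prop:P-} to pass to the untwisted curve, whereas the paper works directly with the untwisted curve and redoes the $W$-invariance computation explicitly for $W=\{u(r\phi^{(1)}(x)):r\in\R\}$; since $z(s)\to z(x)$ uniformly on $I_k$, the two are equivalent.
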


Let $\pi:G\to G/\Gamma$ denote the natural quotient map. Let $S\in\sS$
and $m=1+\dim S$. Let $g\in G$ be such that $S=\cI(\SO(m,1)g)$ and
$\nor(\SO(m,1))\pi(g)$ is closed. Due to the following claim, the coset
$\nor(\SO(m,1))g$ is uniquely defined.

We claim that if $F=\{h\in G: \cI(\SO(m,1)h)=\cI(\SO(m,1))\}$ then
$F=\nor(\SO(m,1)$. To prove the claim, we note that
$\nor(\SO(m,1))\subset F$. In particular $F$ is a reductive
group. Since $\nor(SO(m,1))$ is a symmetric subgroup of $G$, by
\cite[Cor.~4.7]{GOS:Satake}, $\nor(SO(m,1))$ is a maximal reductive
subgroup of $G$. Therefore $F=\nor(SO(m,1))$.

Let $L$ be the subgroup of $\nor(\SO(m,1))$ such that
$L\pi(g)=\cl{\SO(m,1)\pi(g)}$. Let $\mu_{L}$ denote the unique
$L$-invariant probability measure on $L\pi(g)$.

\begin{theo}
  \label{thm:short}
  Let $x\in I$ be such that $\phi^{(1)}(x)\neq 0$ and $x$ is a density
  point for the set $I(S)$. Then for any sequence $t_k\to\infty$ in
  $\R$, and any sequence of intervals $I_k\subset[a,b]$ such that
  $x\in I_k$, $\abs{I_k}\to 0$, and $\abs{I_k}^ne^{t_k}\to\infty$ the
  following holds: For any $f\in\Cc(G/\Gamma)$,
  \begin{equation}
    \lim_{k\to\infty}\frac{1}{\abs{I_k}}
    \int_{I_k}f(a_{t_k}u(\phi(s))g_0)\,ds = \int_{L\pi(g)} f(zy) \,d\mu_L(y),
  \end{equation}
  where $z\in \cen(A)\cap\SO(n)$ is such that $u(\phi(x))g_0\in
  U^-zLg$ and $u(\phi^{(1)}(x))\in zLz\inv$; $z$ depends only on
  $\phi$, $x$, $g_0$ and $Lg$.
\end{theo}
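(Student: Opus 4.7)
The plan is to extend the limiting-measure analysis of Sections~3--4 to the shrinking-interval setting. The hypothesis $|I_k|^n e^{t_k}\to\infty$ is exactly the length-scale condition $e^{-t_k}<|I_k|^n$ on which all the tools of Section~2 operate, so the basic lemma (Proposition~\ref{prop:main}), polynomial approximation (Corollary~\ref{cor:phi-poly}), and non-divergence (Proposition~\ref{prop:nondiv}) apply verbatim with $I$ replaced by $I_k$. One first defines normalized rescaled probability measures
\[
\int_{G/\Gamma} f\,d\lambda_k := \frac{1}{|I_k|}\int_{I_k} f\bigl(\zeta(s)\, a_{t_k}\, u(\phi(s))\,g_0\Gamma\bigr)\,ds,
\]
with $\zeta(s)\in\cen(A)$ from Notation~\ref{notn:z(s)}. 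A weak-$*$ limit $\lambda$ then exists along a subsequence, and $W$-invariance of $\lambda$ follows as in Theorem~\ref{thm:invariant} by comparing curve translates at scales $\delta\sim e^{-t_k}\ll|I_k|$.

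The heart of the proof is the identification of the support of $\lambda$. Since $x$ is a density point of $I(S)$, the relative measure of $I_k\cap I(S)$ in $I_k$ tends to $1$, and on this set $u(\phi(s))g_0\in\cI^{-1}(S)=P^-\nor(\SO(m,1))g$. Continuously near $x$, one writes $u(\phi(s))g_0=v(s)z'(s)\ell(s)g$ with $v(s)\in U^-$, $z'(s)\in\cen(A)$, $\ell(s)\in\nor(\SO(m,1))$. Applying $\zeta(s)a_{t_k}$ contracts the $U^-$-factor while $\cen(A)$ commutes with $a_{t_k}$, so
\[
\zeta(s)a_{t_k}u(\phi(s))g_0\Gamma\approx \zeta(s)z'(s)\,a_{t_k}\ell(s)g\Gamma.
\]
Since $\overline{\SO(m,1)\pi(g)}=L\pi(g)$ and the compact quotient $\nor(\SO(m,1))/L$ gets absorbed, $\lambda$ is supported on a translate of $L\pi(g)$, identifiable with $zL\pi(g)$ precisely because of the theorem's compatibility condition $u(\phi^{(1)}(x))\in zLz^{-1}$ (together with $\zeta(x)\cdot\phi^{(1)}(x)=e_1$).

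Finally, $\lambda$ is a $W$-invariant probability measure on the closed orbit $zL\pi(g)$. Untwisting by $z$, one applies Ratner's classification inside the reductive subgroup $L$, together with the linearization argument of Section~4 adapted to $L$; the density hypothesis on $I(S)$, combined with the definition of $S^*$ (which excludes lower-dimensional members of $\sS$), rules out concentration on intermediate invariant subvarieties, forcing $\lambda=z_*\mu_L$. The main obstacle I anticipate is the support identification: making the decomposition $u(\phi(s))g_0=v(s)z'(s)\ell(s)g$ continuous on $I(S)\cap I_k$ near $x$ and showing $a_{t_k}v(s)a_{t_k}^{-1}\to e$ uniformly, so that the limit measure is genuinely on a single translate of $L\pi(g)$ rather than on a neighbourhood of it.
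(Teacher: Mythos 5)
Your setup --- the rescaled measures $\lambda_k$, weak-$*$ convergence via the nondivergence proposition, $W$-invariance of the limit (using $|I_k|^n e^{t_k}\to\infty$ to suppress the boundary error of size $e^{-t_k}/|I_k|$), and the support identification via the density hypothesis --- matches the paper, and you correctly flag the delicate point, namely that $a_{t_k}$ contracts the $U^-$-component of $u(\phi(s))g_0 \in U^- b(s)Lg$ while $b(s)\in M$ converges to $z$ as $s\to x$, so the limit sits on the single translate $zL\pi(g)$. (The paper's $\lambda_k$ in this proof omits the $\zeta(s)$-twist you include; since $\zeta$ is continuous and $I_k\to\{x\}$ this only differs by a fixed conjugation, but you would need to undo it at the end to get the stated normalization $\lambda=z_\ast\mu_L$.)

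Where you depart from the paper is in the finishing move. You propose to untwist by $z$ and rerun Ratner plus ``the linearization argument of Section~4 adapted to $L$'' inside the smaller homogeneous space $L/(L\cap g\Gamma g^{-1})$. That is conceptually coherent, but it leaves a real gap: the Section~2 machinery (the Basic Lemma, Corollary~\ref{cor:phi-poly}, Proposition~\ref{prop:CD}, Proposition~\ref{prop:S}) and the singular-sphere collection $\sS$ are all set up for $G=\SO(n,1)$ with the curve $\phi:I\to\R^{n-1}$ living in the $U^+$-coordinates of $G$. Adapting them to $L$ requires re-expressing the restricted curve (the $\ell(s)$-component in your decomposition) in $L$-coordinates, verifying that it still satisfies the nondegeneracy and sphere-avoidance hypotheses relative to an $L$-analogue of $\sS$, and reproving the Basic Lemma and nondivergence for $L$. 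None of this is spelled out, and it is a substantial amount of work, not a routine ``adaptation.'' The paper avoids all of it: it keeps Ratner's theorem and the linearization at the $G$-level, exactly as in Theorem~\ref{thm:G-inv} but on $I_k$. This produces a sphere $S'\in\sS$ carrying a definite fraction of $\phi(I_k)$. The density of $I(S)$ together with the definition of $S^\ast$ forces $S\subset S'$; the already-established containment $\supp\lambda\subset zL\pi(g)$, via $\lambda(\pi(N(H,W)))>0$, forces $S'\subset S$. Hence $S=S'$, so $H^{\rm nc}\cong\SO(m,1)$, and a dimension count identifies $\lambda=z\mu_L$. This is considerably more economical than rebuilding the apparatus inside $L$, and it is the step you should adopt in place of the ``linearization inside $L$'' sketch.
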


\begin{proof}[Proofs of Theorem~\ref{thm:short-generic} and
  Theorem~\ref{thm:short}]
 Let $x_0=\pi(g_0)$. For $k\in\N$, let $x_k=\pi(g_k)$ and $\lambda_k$
  be a probability measure on $G/\Gamma$ such that for any
  $f\in\cC(G/\Gamma)$,
  \[
  \int_{G/\Gamma} f\,d\lambda_k=\frac{1}{\abs{I_k}}\int_{I_k}
  f(a_{t_k}u(\phi_k(s)x_k)\,ds.
  \]
  
  First we note that Proposition~\ref{prop:CD} is valid for $J\subset
  I_k$.
  
  Therefore the proof of Theorem~\ref{thm:return} is valid in this
  case, and we obtain that after passing to a subsequence
  $\lambda_k\to\lambda$ in the space of probability measure on
  $G/\Gamma$.

  Let $W=\{u(r\phi^{(1)}(s)):r\in\R\}$. As in
  Theorem~\ref{thm:invariant}, we shall show that $\lambda$ is
  $W$-invariant. We will use the notation $\eta_1\simm{\epsilon}
  \eta_2$ to say that $\abs{\eta_1-\eta_2}\leq \epsilon$.

  Let $r\in\R$, $\epsilon>0$ and $f\in\Cc(G/\Gamma)$ be given. Due to
  uniform continuity of $f$ and equi-continuity of the family
  $\{\phi_k^{(1)}(s)\}$, and since $\abs{I_k}\to 0$, for sufficiently
  large $k\in\N$ and any $s\in I_k$, the following holds:
  \begin{equation}
    \label{eq:71}
    \begin{split}
      f(u(r\phi^{(1)}(x))a_{t_k}u(\phi_k(s))x_k)
      \simm{\epsilon}& f(u(r\phi_k^{(1)}(s))a_{t_k}u(\phi_k(s))x_k)\\
      =& f(a_{t_k}u(\phi_k(s)+e^{-t_k}r\phi^{(1)}(s))x_k)\\
      =& f(a_{t_k}u(\phi_k(s+re^{-t_k})+O(e^{-2t_k}))x_k)\\
      =& f(u(O(e^{-t_k}))a_{t_k}u(\phi_k(s+re^{-t_k}))x_k)\\
      \simm{\epsilon}& f(a_{t_k}u(\phi_k(s+re^{-t_k}))x_k).
    \end{split}
  \end{equation}
  Therefore, for sufficiently large $k\in\N$,
  \begin{equation}
    \label{eq:103}
    \begin{split}
     \int f(u(r\phi^{(1)}(x))y)\,d\lambda_k(y)
      \simm{\epsilon}
      &\frac{1}{\abs{I_k}}\int_{I_k}f(a_{t_k}u(\phi_k(s+re^{-t_k}))x_k)\,ds\\
      \simm{\epsilon}
      &\frac{1}{\abs{I_k}}\int_{I_k}f(a_{t_k}u(\phi_k(s))x_k)\,ds
      =\int f\,d\lambda_k,
    \end{split}
  \end{equation}
  where the last approximation holds because
  \[
  \abs{I_k}^ne^{t_k}\to\infty \implies
  2\frac{1}{\abs{I_k}}\abs{re^{-t_k}}\sup \abs{f}\to 0.
  \]
  From \eqref{eq:103} we deduce that $\lambda$ is $W$-invariant.

  The proof of Proposition~\ref{prop:cusps} goes through in this
  case. We can now apply Ratner's classification of ergodic invariant
  measures exactly as in the earlier case. Then we follow the the
  Proof of Theorem~\ref{thm:G-inv} for $I_k$ in place of $I$. We have
  $E(k)=E_1(k)\cup E_2(k)$. The same proof goes through to say that
  for sufficiently large $k$, $\abs{E_2(k)}\leq
  \epsilon\abs{I_k}$. The basic difference occurs in analyzing
  $\abs{E_1(k)}$. Again all the arguments are valid up to
  \eqref{eq:80} for $I_k$ in place of $I$; we get
  \begin{equation}
    \label{eq:80s}
    \abs{\{s\in I_k: \cS(\phi(s))\in S'g_0\inv\}}
    \geq \epsilon\abs{I_k}/\card{\Sigma},
  \end{equation}
  for some $S'\in \sS$. By our hypothesis, $x$ is a density point of
  $I(S)$ (see \eqref{eq:22}). Therefore in view of the definition of
  $S^\ast$, we deduce that $S\subset S'$.

  In particular, if $S=\Sn^{n-1}$ then this is not possible. Therefore
  as in the Proof of Theorem~\ref{thm:G-inv} we conclude that
  \eqref{eq:79} fails to hold, and in turn \eqref{eq:61} fails to
  hold, and hence $\lambda$ is $G$-invariant. Thus the proof of
  Theorem~\ref{thm:short-generic} is complete.

  Now for Theorem~\ref{thm:short} we have that $\phi_k=\phi$ and
  $g_k=g_0$ for all $k\in\N$. For any $s\in I(S)$, there exists
  $b(s)\in M$ such that $b(s)\to z$ as $s\to x$ and
  \[
  u(\phi(s))g_0\in U^-b(s)Lg.
  \]
  Therefore, since $x$ is a density point of $I(S)$ and $\lambda_k\to
  \lambda$, from the definition of $\lambda_k$ we conclude that
  $\supp\lambda\subset zL\pi(g)$. Since $\lambda(\pi(N(H,W))>0$, we
  conclude that $S'g_0\inv\subset Sg_0\inv$. Thus $S=S'$ and
  $H^\nc\cong \SO(m,1)$, and $\supp(\lambda)\subset
  g'\nor(H^\nc)\pi(e)$ for some $g'\in G$ such that $AW\subset
  g'\nor(H^\nc)(g')\inv$. Since $\lambda(\pi(S(H,W)))=0$, we deduce
  that each $W$-ergodic component of $\lambda$ is invariant under
  $g'H^\nc(g')\inv$. Therefore, since 
\[
\supp(\lambda)\subset zL\pi(g) \quad\text{and}\quad
L\pi(g)=\cl{\SO(m,1)\pi(g)},
\]
by dimension consideration, we conclude that $\supp(\lambda)=zL\pi(g)$
and $\lambda=z\mu_L$. This completes the proof of
Theorem~\ref{thm:short}.
\end{proof}

\section{Evolution of shrinking curves under geodesic flow}
\label{sec:short-geom}

Let the notation be as in \S\ref{subsec:geom}. As a consequence of
Theorem~\ref{thm:short-generic} we obtain the following.

\begin{theo}
  \label{thm:generic-geom}
  Let $\psi\in C^n(I,T^1(M))$. Let $x\in I$ be such that
  $(\Vis\circ\tilde\psi)^{(1)}(x)\neq 0$ and that $x$ is a density
  point of $I(M)$. Then for any sequence $\psi_k\to\psi$ in
  $C^n(I,T^1(M))$, a sequence $t_k\to\infty$, and a sequence $I_k$ of
  subintervals of $I$ containing $x$ such that $\abs{I_k}\to 0$ and
  $\abs{I_k}^ne^{t_k}\to\infty$ the following holds:
  \begin{equation}
    \label{eq:104}
    \lim_{k\to\infty} \frac{1}{\abs{I_k}}\int_{I_k}
    f(g_{t_k}\psi_k(s))\,ds=
    \int_{T^1(M)} f\,d\mu_M, \quad \forall f\in \Cc(T^1(M)).
  \end{equation}
\end{theo}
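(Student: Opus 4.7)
The plan is to reduce Theorem~\ref{thm:generic-geom} to Theorem~\ref{thm:short-generic} by transferring the problem from $T^1(M)$ to $G/\Gamma$, in parallel with the way the proof of Theorem~\ref{thm:M} deduces the full-interval geometric statement from Theorem~\ref{thm:curve} via Proposition~\ref{prop:P-}. Using the identifications $T^1(M)\cong\SO(n-1)\backslash G/\Gamma$ and $T^1(\HH^n)\cong\SO(n-1)\backslash G$ recalled in that proof, the geodesic flow $g_t$ corresponds to left translation by $a_t$, the visual map $\Vis$ corresponds to $\cI$, the two versions of $\sS$ coincide, and any $f\in\Cc(T^1(M))$ lifts to an $\SO(n-1)$-invariant function on $G/\Gamma$. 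A continuous lift of the convergent sequence $\psi_k\to\psi$ in $C^n(I,T^1(M))$ along $D\pi$ gives $\tilde\psi_k\to\tilde\psi$ in $C^n(I,T^1(\HH^n))$, which corresponds to a convergent sequence $\bar\psi_k\to\bar\psi$ in $C^n(I,G)$.

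Set $g_k=\bar\psi_k(x)\to g_0=\bar\psi(x)$. The open Bruhat cell $P^-U^+=U^-\cen(A)U^+$ is a neighbourhood of $e$, and $\bar\psi_k(s)g_k^{-1}\to e$ as $s\to x$, so on some open neighbourhood $I'$ of $x$, for all sufficiently large $k$, we may write uniquely
\[
\bar\psi_k(s)g_k^{-1}=v_k(s)\zeta_k(s)u(\phi_k(s)),
\]
with $v_k(s)\in U^-$, $\zeta_k(s)\in\cen(A)$, $\phi_k(s)\in\R^{n-1}$, and the basepoint normalization $v_k(x)=\zeta_k(x)=e$, $\phi_k(x)=0$. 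Smoothness of the Bruhat decomposition combined with $\bar\psi_k\to\bar\psi$ in $C^n$ gives $v_k\to v$, $\zeta_k\to\zeta$, and $\phi_k\to\phi$ in $C^n$ on $I'$, and $I_k\subset I'$ for all large $k$. Since $\cI\circ\bar\psi(s)=\cS(\phi(s))g_0$ and $g_0$ acts on $\Sn^{n-1}$ by a conformal diffeomorphism while $\cS$ is a local diffeomorphism, the hypothesis $(\Vis\circ\tilde\psi)^{(1)}(x)\neq 0$ becomes $\phi^{(1)}(x)\neq 0$; analogously, using \eqref{eq:22} and the identification of the two versions of $\sS$, the density-point hypothesis on $I(M)$ translates to the condition that $x$ is a density point of $I(\Sn^{n-1})$ in the $\phi$-formulation. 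Applying Theorem~\ref{thm:short-generic} with the sequences $\phi_k\to\phi$, $g_k\to g_0$, $t_k\to\infty$ and the intervals $I_k$ then yields
\[
\lim_{k\to\infty}\frac{1}{|I_k|}\int_{I_k}f(a_{t_k}u(\phi_k(s))g_k\Gamma)\,ds=\int_{G/\Gamma}f\,d\mu_G
\]
for any $\SO(n-1)$-invariant $f\in\Cc(G/\Gamma)$.

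It remains to replace $u(\phi_k(s))g_k$ by the full expression $v_k(s)\zeta_k(s)u(\phi_k(s))g_k=\bar\psi_k(s)$. I would do this by adapting the argument of Proposition~\ref{prop:P-} to the shrinking-interval setting. The family $\{v_k\}$ is equi-continuous and takes values in a compact subset of $U^-$, so $a_{t_k}v_k(s)a_{t_k}^{-1}\to e$ uniformly in $s\in I_k$; by uniform continuity of $f$, $f(a_{t_k}v_k(s)y)-f(a_{t_k}y)$ is uniformly small. For the $\cen(A)$-factor, the conditions $\zeta_k(x)=e$, $\zeta_k\to\zeta$ in $C^1$, and $|I_k|\to 0$ force $\zeta_k(s)\to e$ uniformly on $I_k$, so $f(\zeta_k(s)a_{t_k}u(\phi_k(s))g_k\Gamma)$ differs from $f(a_{t_k}u(\phi_k(s))g_k\Gamma)$ by a negligible error. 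Averaging these approximations over $I_k$ and passing to the limit produces \eqref{eq:104}. The main technical nuisance is just the bookkeeping verifying that the geometric hypotheses (nonvanishing first derivative of the visual map at $x$ and the density-point condition on $I(M)$) transfer correctly, via the Bruhat decomposition and stereographic coordinates, to the Lie-theoretic hypotheses of Theorem~\ref{thm:short-generic}; all of the substantive dynamical input is already contained in that theorem.
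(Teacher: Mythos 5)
Your proposal is correct and matches the reduction the paper intends (the paper gives no explicit proof, only stating that the theorem is a consequence of Theorem~\ref{thm:short-generic}): you supply exactly the bridging steps --- lifting to $G$, a Bruhat decomposition of $\bar\psi_k(s)g_k^{-1}$ normalized at the density point $x$, verification that $(\Vis\circ\tilde\psi)^{(1)}(x)\neq 0$ and the density-point condition on $I(M)$ transfer to the $(\phi,g_0)$-formulation, and removal of the $P^-$-factors. Notably, the shrinking of $I_k$ makes the last step easier than Proposition~\ref{prop:P-} for the full-interval case, since it forces $v_k(s)\zeta_k(s)\to e$ uniformly on $I_k$ and so no appeal to $\cen(A)$-invariance of the limit measure is needed.
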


As a consequence of Theorem~\ref{thm:short} we deduce the following:

\begin{theo}
  \label{thm:short-geom}
  Let $\psi\in C^n(I,T^1(M))$. Let $M_1\in \bar\sS$ and $S\in
  \sS(M_1)$.  Let $x\in I$ be such that
  $(\Vis\circ\tilde\psi)^{(1)}(x)\neq 0$ and $x$ is a density point of
  $I(S)$. Then given any sequence $t_k\to\infty$ in $\R$ and a
  sequence of subintervals $I_k$ of $I$ containing $x$ such that
  $\abs{I_k}\to0$ and $\abs{I_k}^ne^{t_k}\to\infty$ the following
  holds:
  \begin{equation}
    \label{eq:107}
    \lim_{k\to\infty}  \frac{1}{\abs{I_k}}\int_{I_k} f(a_{t_k}\psi(s))\,ds
    =\int_{T^1(M_1)} f\,d\mu_{M_1}, \quad \forall f\in\Cc(T^1(M)).
  \end{equation}
\end{theo}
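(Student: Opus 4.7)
The strategy is to derive Theorem~\ref{thm:short-geom} from Theorem~\ref{thm:short} via the homogeneous-space correspondence $T^1(M)\cong\SO(n-1)\backslash G/\Gamma$ already used for Theorem~\ref{thm:M}. Under this identification the geodesic flow $\{g_t\}$ becomes left-translation by $\{a_t\}$, the visual map $\Vis$ becomes $\bar\cI$, the closed totally geodesic immersed submanifold $M_1\in\bar\sS$ of dimension $m$ corresponds to a closed orbit $\nor(\SO(m,1))\pi(g_0)\subset G/\Gamma$ whose image in $\SO(n-1)\backslash G/\Gamma$ is $T^1(M_1)$, and the sphere $S\in\sS(M_1)$ equals $\cI(\SO(m,1)g_0)$ for an appropriate $g_0\in G$. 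Any $f\in\Cc(T^1(M))$ lifts to an $\SO(n-1)$-invariant $\tilde f\in\Cc(G/\Gamma)$.

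I would first lift $\psi$ through $\SO(n-1)\backslash G\to \SO(n-1)\backslash G/\Gamma$ to a continuous $\Psi:I\to G$, and use the Bruhat decomposition $G=P^-U^+\cup P^-w$ to write $\Psi(s)=\theta(s)u(\phi(s))g_0$ in a neighbourhood of $x$, where $\theta:I\to P^-$ is continuous with $\theta(x)=e$ and $\phi\in C^n(I,\R^{n-1})$. Via the inverse stereographic projection $\cS$, the geometric hypothesis $(\Vis\circ\tilde\psi)^{(1)}(x)\neq 0$ becomes $\phi^{(1)}(x)\neq 0$, and ``$x$ is a density point of $I(S)$'' becomes the condition that $x$ is a density point of $\{s\in I:\phi(s)\in\cS\inv(S^\ast g_0\inv)\}$, which is precisely the hypothesis of Theorem~\ref{thm:short}.

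Next, apply Theorem~\ref{thm:short} to $s\mapsto u(\phi(s))g_0$, obtaining
\begin{equation*}
\frac{1}{\abs{I_k}}\int_{I_k}\tilde f(a_{t_k}u(\phi(s))g_0)\,ds\;\longrightarrow\;\int_{L\pi(g_0)}\tilde f(zy)\,d\mu_L(y),
\end{equation*}
where $z\in M=\cen(A)\cap\SO(n)$ and $\mu_L$ is the unique $L$-invariant probability on $\cl{\SO(m,1)\pi(g_0)}$. To replace $u(\phi(s))g_0$ by the full lift $\Psi(s)$, decompose $\theta(s)=\bar u(s)\zeta(s)$ with $\bar u(s)\in U^-$ and $\zeta(s)\in\cen(A)$. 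Since $\theta(x)=e$, we have $\zeta(s)\to e$ as $s\to x$, while $a_{t_k}\bar u(s)a_{t_k}\inv\to e$ because $U^-$ is contracted by $\{a_{t_k}\}$; hence uniform continuity of $\tilde f$ together with $\abs{I_k}\to 0$ forces the difference between the two integrals to vanish as $k\to\infty$.

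Finally, I would identify the right-hand side with $\int_{T^1(M_1)}f\,d\mu_{M_1}$: the rotation $z\in M\cong\On(n-1)$ that aligns $\phi^{(1)}(x)$ with $e_1$ commutes with $\{a_t\}$ and preserves the geodesic direction, so its left-action on $\SO(n-1)\backslash G/\Gamma$ is absorbed by $\SO(n-1)$-invariance of $\tilde f$, and uniqueness of the $\nor(\SO(m,1))$-invariant probability measure on the closed orbit implies that the pushforward of $\mu_L$ to $\SO(n-1)\backslash G/\Gamma$ is exactly the canonical normalized Riemannian measure $\mu_{M_1}$ on $T^1(M_1)$. The main technical obstacle is this last measure-identification step: matching the embedding $T^1(M_1)\hookrightarrow T^1(M)$ with the group-theoretic inclusion $\nor(\SO(m,1))\hookrightarrow G$ and showing that the specific $z$-twist supplied by Theorem~\ref{thm:short} (potentially landing in the non-identity component of $M$) is absorbed by the left $\SO(n-1)$-quotient applied to $\tilde f$, so that only the unique invariant probability measure on the closed orbit survives.
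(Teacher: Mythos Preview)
Your proposal is correct and follows the route the paper itself indicates: the paper simply asserts that Theorem~\ref{thm:short-geom} is ``a consequence of Theorem~\ref{thm:short}'' and gives no further argument, so the translation you outline---lifting $\psi$ to $G$, passing to the Bruhat chart $\Psi(s)=\theta(s)u(\phi(s))g_0$, applying Theorem~\ref{thm:short}, and then disposing of the $P^-$--factor $\theta(s)$ via the contraction of $U^-$ and the continuity of the $\cen(A)$--part---is exactly the intended deduction, carried out in the same spirit as the proof of Theorem~\ref{thm:M} and Proposition~\ref{prop:P-}.

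One small comment on the point you flag as the main obstacle. The identification of $\int_{L\pi(g)}\tilde f(zy)\,d\mu_L(y)$ with $\int_{T^1(M_1)}f\,d\mu_{M_1}$ is indeed the only place where something needs to be checked, but it is not deep: the image of $zL\pi(g)$ in $\SO(n-1)\backslash G/\Gamma$ is $T^1(M_1)$ regardless of the component of $M\cong\On(n-1)$ in which $z$ lies, because $L\supset\SO(m,1)$ already contains a reflection in $M$ (coming from the Weyl element of $\SO(m,1)$) that switches the two $\SO(n-1)$--cosets in $M$; hence the pushforward of $z_\ast\mu_L$ to $T^1(M)$ is independent of $z$ and, being invariant under the transitive action of $\SO(m,1)$ on $T^1(M_1)$, must equal $\mu_{M_1}$. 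The paper suppresses this verification entirely, so your sketch is in fact more detailed than what appears there.
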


The conclusion of Theorem~\ref{thm:smooth-general} can be deduced from
Theorem~\ref{thm:short-geom} using using regularity Lebesgue measure
and standard arguments of measure theory.

\subsection{Geodesic evolution of faster shrinking curve}

We can also obtain following variations of
Theorem~\ref{thm:generic-geom} and Theorem~\ref{thm:short-geom}. 

\begin{theo}
  \label{thm:generic-geom-fast}
  In the statement of Theorem~\ref{thm:generic-geom} suppose that
  $\psi_k\to \psi$ in $C^{2n-2}(I,T^1(M))$. Then given a sequence
  $t_k\to\infty$, and a sequence of subintervals $I_k$ of $I$
  containing $x$ such that $\abs{I_k}\to0$ and
  $\abs{I_k}^2e^{t_k}\to\infty$, the equation \eqref{eq:104} holds.
\end{theo}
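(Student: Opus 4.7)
The plan is to mimic the proof of Theorem~\ref{thm:generic-geom} (equivalently, Theorem~\ref{thm:short-generic}), replacing the Basic Lemma (Proposition~\ref{prop:main}) by a stronger version that functions on intervals of length only $\sim e^{-t_k/2}$ rather than $\sim e^{-t_k/n}$, using the enhanced regularity $\psi_k\to\psi$ in $C^{2n-2}$ to close the contradiction argument. After the lifting step from the proof of Theorem~\ref{thm:M}, the matter reduces to showing that the averaging measures $\lambda_k$ on $G/\Gamma$ converge weak-$\ast$ to $\mu_G$. Non-divergence of $\{\lambda_k\}$ follows from Proposition~\ref{prop:nondiv} applied to $I_k$, whose quantitative input $e^{-t_k}<\abs{I_k}^2$ is exactly our hypothesis, and the $W$-invariance of any weak-$\ast$ limit $\lambda$ is obtained by the rescaling computation \eqref{eq:71}--\eqref{eq:103} of Theorem~\ref{thm:short-generic}; the only input there is $e^{-t_k}/\abs{I_k}\to 0$, which follows from $\abs{I_k}\to 0$ together with $\abs{I_k}^2 e^{t_k}\to\infty$.

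The analytical core is to prove a variant of Proposition~\ref{prop:main} in which the interval bound $e^{t_k}(s_k'-s_k)^n<C$ is replaced by $e^{t_k}(s_k'-s_k)^2<C$. Running the contradiction argument of Proposition~\ref{prop:main} produces sequences with $\delta_k^2 e^{t_k}\geq C$ and $\sup_{[0,\delta_k]}\norm{a_{t_k}u(\phi_{k,r_k}(s))w_k}\leq R_k^{-1}$. I would replace $\phi_{k,r_k}$ on $[0,\delta_k]$ by its degree-$(2n-2)$ Taylor polynomial $P_{k,r_k}$ at $0$; using $\phi_k \in C^{2n-2}$ (Peano form), the remainder is $o(s^{2n-2})$. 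Then $s\mapsto\innprod{u(P_{k,r_k}(s))w_k}{v}$ is a genuine polynomial in $s$ of degree bounded by a function of $n$ alone, and the Markov brothers' inequality for such polynomials bounds the $m$-th derivative by $C''_m\,\delta_k^{-m}\sup|\cdot|$ \emph{without} the $R_k^{-1}\delta_k^{2-m}$ blow-up for $m>2$ that naive differentiation of $f_k$ incurs. Combining with the Taylor-error control shows $\innprod{u(\phi_{0,r_0}^{(m)}(0))w_0}{v}=0$ for all $0\leq m\leq 2n-2$ and $v\in V_1$, whence $\norm{q_1(u(\phi_{0,r_0}(s))w_0)}=o(s^{2n-2})$ by Taylor's formula. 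Since $1-\mu_0\leq n\leq 2n-2$ for $n\geq 2$, this contradicts the $\SL(2,\R)$-representation lower bound \eqref{eq:26}.

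With the strengthened Basic Lemma in hand, the analogues of Corollary~\ref{cor:phi-poly} and Proposition~\ref{prop:CD} go through with the condition $e^{-t_k}<\abs{J}^2$ in place of $e^{-t_k}<\abs{J}^n$, which is satisfied by $J=I_k$. The linearization argument of Theorem~\ref{thm:G-inv}, together with Ratner's theorem and the density-point hypothesis on $x\in I(M)$ ruling out concentration on proper closed orbits (as in Theorem~\ref{thm:short-generic}), forces $\lambda=\mu_G$ and hence \eqref{eq:104}. The principal obstacle will be controlling the Taylor error uniformly enough to make the polynomial replacement rigorous: one must show that $e^{t_k}\norm{\phi_{k,r_k}(s)-P_{k,r_k}(s)}$ is negligible compared to $\norm{a_{t_k}u(P_{k,r_k}(s))w_k}$ on $[0,\delta_k]$. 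In the tight regime $\delta_k\sim e^{-t_k/2}$ one has $e^{t_k}\delta_k^{2n-2}=(e^{t_k}\delta_k^{2})\,\delta_k^{2n-4}\to 0$ for $n\geq 3$ using $\delta_k\to 0$, so the Peano remainder suffices; in regimes where $\delta_k$ is significantly larger than $e^{-t_k/2}$, the argument is more delicate and likely requires subdividing $[0,\delta_k]$ into blocks on which the polynomial approximation is effective and reassembling the local estimates into a global statement.
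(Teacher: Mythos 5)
Your overall strategy --- reduce to a variant of the Basic Lemma (Proposition~\ref{prop:main}) with $e^{t_k}(s'_k-s_k)^2<C$ in place of $e^{t_k}(s'_k-s_k)^n<C$, then run the non-divergence, $W$-invariance, and linearization machinery on the intervals $I_k$ --- is exactly the route the paper takes (Proposition~\ref{prop:main-variation}). But your proposed proof of the modified Basic Lemma has a genuine gap in the derivative estimates, and it is precisely at this point that the paper uses a different, and essential, idea.

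You keep the projection $q_1$ and try to control derivatives of $s\mapsto\innprod{u(\phi_{k,r_k}(s))w_k}{v}$ up to order $2n-2$ by replacing $\phi_{k,r_k}$ by its Taylor polynomial and invoking Markov's inequality. This cannot work as stated. With $e^{t_k}\delta_k^2\geq C$, the bound \eqref{eq:13} at level $\mu=1$ gives $\sup_{[0,\delta_k]}\norm{q_1(u(\phi_{k,r_k}(s))w_k)}\leq R_k^{-1}C^{-1}\delta_k^{2}$; the Peano remainder from the Taylor polynomial adds only $o(\delta_k^{2n-2})\ll\delta_k^2$, so the sup bound on the polynomial is still of size $\sim R_k^{-1}\delta_k^2$. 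Markov's inequality on $[0,\delta_k]$ bounds the $m$-th derivative by a constant times $\delta_k^{-m}$ times this sup, i.e.\ by $\sim R_k^{-1}\delta_k^{2-m}$. This is \emph{exactly the same} $\delta_k^{-m}$ scaling the Rolle-type estimate \eqref{eq:96} gives; Markov does not remove the $\delta_k^{2-m}$ factor, which blows up for $m>2$. So you do not get $\innprod{u(\phi^{(m)}_{0,r_0}(0))w_0}{v}=0$ for $m>2$, and the Taylor argument yields only $o(s^2)$, not the $o(s^{2n-2})$ you need to contradict the lower bound $\gtrsim s^{1-\mu_0}$ when $\mu_0$ is as small as $-(n-1)$.

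The paper's fix is to change which weight projection one works with: put $\mu=n-1$ in \eqref{eq:13} rather than $\mu=1$. Under $e^{t_k}\delta_k^2\geq C$ this gives $\sup_{[0,\delta_k]}\norm{q_{n-1}(u(\phi_{k,r_k}(s))w_k)}\leq R_k^{-1}C^{-(n-1)}\delta_k^{2(n-1)}$, a far stronger sup bound of order $\delta_k^{2n-2}$. Now the $m$-th derivative estimate $\sim\delta_k^{-m}\cdot\delta_k^{2n-2}=\delta_k^{2n-2-m}$ tends to zero for all $m\leq 2n-2$, which is exactly the range available from $C^{2n-2}$ regularity, yielding $\lim_{s\to0}\norm{q_{n-1}(u(\phi_{0,r_0}(s))w_0)}/s^{2n-2}=0$. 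On the other side, the $\SL(2,\R)$ computation \eqref{eq:23} redone for $q_{n-1}$ gives a leading term $h^{(n-1)-\mu_0}q_{\mu_0}(P_{W_0}(\theta(s)w_0))$, hence a lower bound of order $s^{(n-1)-\mu_0}$; since $\mu_0\geq-(n-1)$ we have $(n-1)-\mu_0\leq 2n-2$, and the contradiction closes. In short: the extra regularity $C^{2n-2}$ is cashed in not through a sharper polynomial inequality at the level of $q_1$, but by working with the extreme weight projection $q_{n-1}$, for which the hypothesis $e^{t_k}\delta_k^2\geq C$ automatically supplies the much higher-order smallness $\delta_k^{2n-2}$. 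Also, your worry about subdividing when $\delta_k$ is much larger than $e^{-t_k/2}$ is moot: in the contradiction argument $r_k,r'_k\to r_0$ forces $\delta_k\to 0$, and the estimate only needs the one-sided inequality $e^{-t_k}\leq C^{-1}\delta_k^2$.
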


\begin{theo}
  \label{thm:short-fast}
  In the statement of Theorem~\ref{thm:short-geom} suppose that
  $\psi\in C^{2n-2}(I,T^1(M))$. Then given a sequence $t_k\to\infty$, and a
  sequence of subintervals $I_k$ of $I$ containing $x$ such that
  $\abs{I_k}\to 0$ and $\abs{I_k}^2e^{t_k}\to\infty$, the
  equation~\eqref{eq:107} holds.
\end{theo}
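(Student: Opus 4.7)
The strategy is to refine the Basic Lemma (Proposition~\ref{prop:main}) so that its conclusion replaces the bound $e^{t_k}(s_k'-s_k)^n<C$ by the sharper $e^{t_k}(s_k'-s_k)^2<C$, using the extra regularity $\phi\in C^{2n-2}$. Once this is in hand, the corresponding version of Corollary~\ref{cor:phi-poly} uses Taylor polynomials of degree $2n-3$, and the approximation error $O(\delta_k^{2n-2})$ combined with $\delta_k^2\leq e^{-t_k}$ yields $e^{t_k}\cdot O(\delta_k^{2n-2})=O(e^{-(n-2)t_k})$, so the polynomial-approximation machinery remains valid. Proposition~\ref{prop:CD} therefore adapts with exponent $2$ in place of $n$; Proposition~\ref{prop:nondiv} then delivers non-divergence whenever $\abs{I_k}^2 e^{t_k}\to\infty$; and the rest of the proof of Theorem~\ref{thm:short-geom} (the $W$-invariance argument \eqref{eq:71}--\eqref{eq:103}, which uses only second-order Taylor expansion, followed by Ratner's theorem and the linearization of Section~4) proceeds verbatim to give \eqref{eq:107}.

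To prove the refined Basic Lemma I would argue by contradiction as in Proposition~\ref{prop:main}: suppose there exist sequences $t_k\to\infty$, $R_k\to\infty$, $v_k\to v_0$ with $\norm{v_0}=1$, and $[r_k,r'_k]\subset I$ with $\delta_k=r'_k-r_k\to 0$ satisfying $\sup_{s\in[r_k,r'_k]}\norm{a_{t_k}u(\phi(s))v_k}\leq R_k^{-1}$ together with the strengthened growth hypothesis $e^{t_k}\delta_k^2\geq C$. The inequality $e^{-t_k}\leq C^{-1}\delta_k^2$ then yields, for every integer weight $\mu\geq 1$, the estimate $\sup_{s\in[0,\delta_k]}\norm{q_\mu(u(\phi_{k,r_k}(s))w_k)}\leq R_k^{-1}C^{-\mu}\delta_k^{2\mu}$. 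Whereas the original proof applies Kolmogorov's inequality \eqref{eq:96} only in $V_1$ and only up to order $n$, the hypothesis $\phi\in C^{2n-2}$ lets me apply \eqref{eq:96} to the scalar function $s\mapsto\inpr{q_\mu(u(\phi_{k,r_k}(s))w_k)}{v}$ for each $v\in V_\mu$ and each weight $1\leq\mu\leq n-1$, up to any order $m\leq 2\mu$ (permissible since $2\mu\leq 2n-2$). Passage to the limit yields the vanishing of the first $2\mu$ derivatives at $s=0$, hence $\norm{q_\mu(u(\phi_{0,r_0}(s))w_0)}=o(s^{2\mu})$.

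The contradictory lower bound comes from a generalisation of the $\SL(2,\R)$-module computation \eqref{eq:18}--\eqref{eq:26}: for any weight $\mu'>\mu_0$ in the irreducible $H$-submodule $W_0$ chosen as in \eqref{eq:19}, the same standard description of $u(he_1)$ acting on the highest-weight-$\mu_0$ component of $P_{W_0}(\theta(s)w_0)$ gives the leading asymptotic $\norm{q_{\mu'}(u(\phi_{0,r_0}(s))w_0)}\gtrsim s^{\mu'-\mu_0}$. Choosing $\mu'=-\mu_0\in\{1,\ldots,n-1\}$ (valid since $\mu_0\in\{-(n-1),\ldots,-1\}$), the resulting lower bound $\gtrsim s^{-2\mu_0}$ contradicts the upper bound $o(s^{-2\mu_0})$ of the previous paragraph, completing the proof of the refined Basic Lemma.

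The main obstacle is precisely the coupling between the smoothness class $C^{2n-2}$ and the possible range $\{-(n-1),\ldots,-1\}$ of $\mu_0$: each value of $\mu_0$ forces the use of the weight $\mu'=-\mu_0$, and Kolmogorov's inequality in $V_{\mu'}$ requires $\phi\in C^{2\mu'}$. Only the extreme case $\mu_0=-(n-1)$ saturates this requirement at $C^{2n-2}$, while the $\SL(2,\R)$-representation computation supplies the matching lower bound $s^{-2\mu_0}$ of precisely the same order. It is this uniform treatment across all weights $\mu'\in\{1,\ldots,n-1\}$ that both forces the smoothness hypothesis $C^{2n-2}$ and makes the exponent $2$ in the refined Basic Lemma attainable.
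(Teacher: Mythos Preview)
Your proposal is correct and follows the paper's own route: isolate a refined Basic Lemma (the paper's Proposition~\ref{prop:main-variation}) replacing the bound $e^{t_k}(s'_k-s_k)^n<C$ by $e^{t_k}(s'_k-s_k)^2<C$ under the hypothesis $\phi\in C^{2n-2}$, and then rerun the rest of the machinery (the analogues of Corollary~\ref{cor:phi-poly}, Proposition~\ref{prop:CD}, Proposition~\ref{prop:nondiv}, and the proof of Theorem~\ref{thm:short}) unchanged.

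The one substantive difference is inside the proof of the refined Basic Lemma. The paper projects only onto the single top weight space $V_{n-1}$, obtains the upper bound \eqref{eq:27-2}, and claims the analogue of \eqref{eq:23} has leading term $h^{(n-1)-\mu_0}$. You instead establish the upper bound $o(s^{2\mu})$ for \emph{every} positive weight $\mu$ and then specialise to $\mu'=-\mu_0$. Your version is arguably more careful: the irreducible $H$-submodule $W_0$ selected in \eqref{eq:19} automatically contains the weight $-\mu_0$ (since its highest weight is at least $\lvert\mu_0\rvert$), whereas it need not contain the weight $n-1$, so the paper's sketch tacitly assumes a choice of $W_0$ with highest weight exactly $n-1$. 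As a minor simplification, the analogue of Corollary~\ref{cor:phi-poly} needs only the linear Taylor polynomial $P_{k,x}(s)=\phi_k(x)+s\phi_k^{(1)}(x)$ rather than degree $2n-3$, since $\phi^{(1)}$ is Lipschitz and $\abs{J_k}^2\leq e^{-t_k}$; this is the content of Remark~\ref{rem:linear}.
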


It is interesting to compare these statements with the results in
\cite{Strom:long-horo}.

To prove the above theorems using the method of this article, the
only property required to be verified is the following variation of
Proposition~\ref{prop:main}.

\begin{prop}[Basic Lemma-II]
  \label{prop:main-variation}
  Let $\phi_k\to\phi$ in $C^{2n-2}(I,\R^{d-1})$.  Given $C>0$, there
  exists $R_0>0$ such that for any sequence $t_k\to\infty$ in $\R$
  there exists $k_0\in\N$ such that for any $x\in I=[a,b]$ and $v\in
  V$, there exists an interval $[s_k,s'_k]\subset I$ containing $x$
  such that for any $k\geq k_0$, the following conditions are
  satisfied:
  \begin{align}
    e^{t_k}(s'_k-s_k)^2&<C,\\
    \norm{a_{t_k}u(\phi_k(s_k))v}&\geq \norm{v}/R_0, \quad \text{if
      $s_k>a$,}\\
    \norm{a_{t_k}u(\phi_k(s'_k))v}&\geq \norm{v}/R_0, \quad \text{if
      $s'_k<b$.}
  \end{align}
\end{prop}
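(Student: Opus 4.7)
The plan is to imitate the proof of Proposition~\ref{prop:main} almost verbatim, but to replace its target weight $1$ with the top weight $k_0$ of a judiciously chosen $\SL(2,\R)$-irreducible summand of $V$. In the original argument, the tension is between a Taylor upper bound of order $s^n$ (consuming all $n$ available derivatives of $\phi$) and a rep-theoretic lower bound of order $s^{1-\mu_0}$, reconciled by the global bound $\mu_0\geq-(n-1)$. In the variation, the analogous contradiction will run with a Taylor upper bound of order $s^{2k_0}$ (consuming $2k_0\leq 2n-2$ derivatives) and a rep-theoretic lower bound $s^{k_0-\mu_0}$, reconciled automatically by the intrinsic bound $\mu_0\geq -k_0$ inside any single $\SL(2,\R)$-irrep of top weight $k_0$. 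This allows weakening the growth condition from $e^{t_k}\delta_k^n\geq C$ to $e^{t_k}\delta_k^2\geq C$ at the cost of requiring $C^{2n-2}$ rather than $C^n$ regularity of $\phi$.

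First I would assume the lemma fails and extract the same contradictory data as in Proposition~\ref{prop:main}: $t_k\to\infty$, $R_k\to\infty$, $v_k\to v_0$ with $\norm{v_0}=1$, and $[r_k,r_k']\subset I$ with $r_k,r_k'\to r_0$, $\delta_k:=r_k'-r_k\to 0$, satisfying $\sup_s\norm{a_{t_k}u(\phi_k(s))v_k}\leq R_k^{-1}$ and $e^{t_k}\delta_k^2\geq C$. Writing $w_k:=u(\phi_k(r_k))v_k\to w_0\neq 0$ and $\phi_{k,r_k}(s):=\phi_k(r_k+s)-\phi_k(r_k)$, the eigenvalue decomposition of $a_t$ gives, for each $\mu\in\{0,1,\ldots,n-1\}$,
\[
\sup_{s\in[0,\delta_k]}\norm{q_\mu(u(\phi_{k,r_k}(s))w_k)}\leq R_k^{-1}e^{-\mu t_k}\leq R_k^{-1}C^{-\mu}\delta_k^{2\mu},
\]
so evaluating at $s=0$ forces $w_0\in V^-$.

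Next, for each $\mu\in\{1,\ldots,n-1\}$ and each unit $v\in V_\mu$, I would apply the elementary derivative estimate~\eqref{eq:96} to the scalar function $\psi_k(s):=\innprod{u(\phi_{k,r_k}(s))w_k}{v}$, pushing it up to the $(2\mu)$-th derivative (permitted by $2\mu\leq 2n-2$ and $\phi_k\in C^{2n-2}$) and passing to the limit using $C^{2\mu}$-convergence $\phi_{k,r_k}\to\phi_{0,r_0}$ on a fixed neighborhood of $0$; this forces all derivatives of order $\leq 2\mu$ of $\psi_0(s):=\innprod{u(\phi_{0,r_0}(s))w_0}{v}$ to vanish at $s=0$, so Taylor's theorem yields $\norm{q_\mu(u(\phi_{0,r_0}(s))w_0)}=o(s^{2\mu})$ as $s\to 0$. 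I would then fix an $H$- and $M$-invariant inner product on $V$ with associated orthogonal $H$-irreducible decomposition $V=\oplus_{W\in\cW}W$, select $W_0\in\cW$ such that $P_{W_0}(\theta(0)w_0)\neq 0$, and set $k_0:=\mu_{\max}(W_0)$ and $\mu_0:=\max\{\mu:q_\mu(P_{W_0}(\theta(0)w_0))\neq 0\}$; then $-k_0\leq\mu_0\leq -1$, so $1\leq k_0\leq n-1$. Running the computation~\eqref{eq:99}--\eqref{eq:23} verbatim but projecting onto weight $k_0$ of $W_0$ (instead of weight $1$) produces a leading term of order $h^{k_0-\mu_0}$ with nonzero coefficient, because the $\SL(2,\R)$ raising operator $E^{k_0-\mu_0}$ is injective on the one-dimensional weight-$\mu_0$ subspace of the top-weight-$k_0$ irrep $W_0$. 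Using $h\geq\rho_0 s$, norm-preservation of $\theta(s)$ and norm-decrease of $P_{W_0}$, this yields $\norm{q_{k_0}(u(\phi_{0,r_0}(s))w_0)}\geq c\,s^{k_0-\mu_0}$ for small $s>0$. Dividing by $s^{2k_0}$ produces $\geq c\,s^{-k_0-\mu_0}$, which does not tend to $0$ as $s\to 0^+$ because $\mu_0\geq -k_0$, contradicting the Taylor bound $o(s^{2k_0})$.

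The main obstacle is identifying the correct target weight: one must pick $k_0$ to be the top weight of an irrep $W_0$ carrying a nontrivial component of $\theta(0)w_0$, since this is precisely the choice for which the intra-irrep bound $\mu_0\geq -k_0$ matches the derivative budget $2k_0$. The residual technical points -- choosing an inner product simultaneously invariant under $H$ and $M$, continuity of $q_{\mu_0}(P_{W_0}(\theta(s)w_0))$ near $s=0$, and injectivity of $E^{k_0-\mu_0}$ on the weight-$\mu_0$ subspace of a top-weight-$k_0$ irrep -- are all standard once the target weight has been fixed.
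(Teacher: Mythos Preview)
Your argument is correct and follows the paper's strategy closely: negate, extract the same limit data $w_0\in V^-$, trade the hypothesis $e^{t_k}\delta_k^2\geq C$ for a Taylor bound on positive-weight projections, and then contradict this via the $\SL(2,\R)$ computation \eqref{eq:99}--\eqref{eq:23}. The one substantive difference is the choice of target weight. The paper's sketch projects onto the \emph{fixed} global top weight $n-1$, obtaining
\[
\lim_{s\to 0}\norm{q_{n-1}(u(\phi_{0,r_0}(s))w_0)}/s^{2n-2}=0
\]
and then asserts a leading term $h^{(n-1)-\mu_0}$ in the $q_{n-1}$-analogue of \eqref{eq:23}, closing via the global bound $\mu_0\geq -(n-1)$. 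You instead take the target weight \emph{adaptively} as the top weight $k_0$ of the chosen irreducible $W_0$, and close via the intrinsic bound $\mu_0\geq -k_0$. Your version is in fact a shade more careful: the paper's leading-term claim tacitly needs $W_0$ to contain weight $n-1$ (else $P_{W_0}\circ q_{n-1}=0$), a point the sketch does not address, whereas your adaptive choice always lands inside $W_0$. The trade-off is nil, since you already establish the Taylor bound $o(s^{2\mu})$ for every positive $\mu\leq n-1$, so specializing to $\mu=k_0$ costs nothing and gives the clean match $k_0-\mu_0\leq 2k_0$.
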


\begin{proof}
  We follow the strategy of the proof of
  Proposition~\ref{prop:main}. We will now highlight some crucial
  modification required in the proof.

  First \eqref{eq:3b} is replaced by $e^{t_k}\delta_k^2\geq C$. We put
  $\mu=n-1$ in \eqref{eq:13} to get
  \[
  \sup_{s\in [0,\delta_k]}\norm{q_{n-1}(u(\phi_{k,r_k}(s))w_k)}\leq
  R_k\inv C\inv \delta_k^{2(n-1)}.
  \]
  Therefore in place of \eqref{eq:27} we will have
  \begin{equation}
    \label{eq:27-2}
    \lim_{s\to 0}\norm{q_{n-1}(u(\phi_{0,r_0}(s))w_0)}/s^{2n-2}=0.
  \end{equation}
  Now following the further arguments using the
  $\SL(2,\R)$-representation theory, we will obtain an analogue of
  \eqref{eq:23} for $q_{n-1}$ involving $h^{(n-1)-\mu_0}$ in the
  highest order term. Therefore \eqref{eq:26} will become
  \[
  \lim_{s\to 0}
  \norm{q_{n-1}(u(\phi_{0,r_0}(s))w_0)}/s^{(n-1)-\mu_0}\geq\eta_0\rho_0^{n-1-\mu_0}>0.
  \]
  Since $n-1-\mu_0\leq 2n-2$, this will contradict \eqref{eq:27-2}.
\end{proof}

\begin{rem} 
  \label{rem:linear} Using Proposition~\ref{prop:main-variation}, we
  can obtain an analogue of Corollary~\ref{cor:phi-poly} for
  $P_{k,x}(s)=\phi_k(x)+s\phi_k^{(1)}(s)$. Thus for linearization
  technique, we can approximate a $C^{2(n-1)}$-curve $\phi_k$ at any
  $s\in I$ by its tangent line, rather than a polynomial curve.
\end{rem}

\end{document}